\numberwithin{equation}{section}
\newcommand{\mi}{\bbi\xspace}
\DeclareMathSymbol{\varnothing}{\mathord}{AMSb}{"3F}
\DeclareMathOperator{\RE}{Re}
\DeclareMathOperator{\IM}{Im}
\newcommand{\bbC}{\mathbb{C}}
\newcommand{\bbN}{\mathbb{N}}
\newcommand{\bbR}{\mathbb{R}}
\newcommand{\N}{\mathbb{N}}
\newcommand{\R}{\mathbb{R}}
\newcommand{\C}{\mathbb{C}}
\newcommand{\one}{\mathbbm{1}}
\newcommand{\calP}{\mathcal{P}}
\newcommand{\AND}{\quad\text{and}\quad}
\newcommand{\diff}{\mathrm{d}}
\newcommand{\tr}{\mathrm{tr}\,}
\newcommand{\Res}{\mathrm{Res}}
\newcommand{\Liouville}{\mathrm{L}}
\theoremstyle{plain}
\newtheorem{theorem}{Theorem}[section]
\newtheorem*{theorem*}{Theorem}
\newtheorem*{corollary*}{Corollary}
\newtheorem{proposition}[theorem]{Proposition}
\newtheorem*{proposition*}{Proposition}
\newtheorem{lemma}[theorem]{Lemma}
\newtheorem*{lemma*}{Lemma}
\newtheorem{example}[theorem]{Example}
\newtheorem*{example*}{Example}
\newtheorem*{definition*}{Definition}
\newtheorem*{notation*}{Notation}
\newtheorem{remark}[theorem]{Remark}
\newtheorem*{remark*}{Remark}
\newcommand{\bbi}{\mathbbm{i}}
\title[Blowing up sequences of constant mean curvature tori 
in $\mathbb{R}^3$ to minimal surfaces]{Blowing up sequences of constant mean curvature tori 
  in $\mathbb{R}^3$ \\ to minimal surfaces}
\author[E. Carberry]{Emma Carberry}
\email{emma.carberry@sydney.edu.au}
\address{School of Mathematics and Statistics\\University of Sydney\\Australia}
\author[S. Klein]{Sebastian Klein}
\email{math@sebastian-klein.de}
\address{S.~Klein, Mathematics Chair III\\
Universit\"at Mannheim\\
D-68131 Mannheim, Germany}
\author[M. Schmidt]{Martin Ulrich Schmidt}
\email{schmidt@math.uni-mannheim.de}
\address{M.~Schmidt, Mathematics Chair III\\
Universit\"at Mannheim\\
D-68131 Mannheim, Germany}
\thanks{\today.}
\begin{document}

\begin{abstract}
	This paper is motivated by the question of whether a sequence of solutions of a given integrable system can be blown up to obtain a solution of a different integrable system in the limit. We study a specific example of this phenomenon. Namely, we describe a blow-up for immersed constant mean curvature (cmc) planes of finite type with unbounded principal curvatures and derive sufficient conditions under which this blow-up converges to a minimal surface immersion. Passing to the respective Gauss-Codazzi equations,  we are blowing up a sequence of solutions to the sinh-Gordon integrable system to obtain a solution to Liouville's equation, whose integrable system will turn out to be closely related to the Korteweg-de Vries integrable system. Our most important tool for this investigation is the algebraic-geometric correspondence that was established by Pinkall/Sterling and by Hitchin for cmc planes of finite type, which include all cmc tori. 
\end{abstract}

\maketitle

\textbf{Acknowledgement.} Sebastian Klein was funded by the Deutsche Forschungsgemeinschaft, Grant 414903103.

\section{Introduction}
\label{Se:intro}
It is well-known that the Gauss-Codazzi equations for constant mean curvature (cmc) tori reduce to the sinh-Gordon equation. The discovery by Pinkall/Sterling \cite{PS:89} and Hitchin \cite{Hi} of an algebraic-geometric correspondence between cmc tori and algebraic data led to the understanding that the sinh-Gordon equation is an integrable system. These algebraic data include potentials, polynomial Killing fields and spectral curves. We shall study sequences of cmc tori with exploding principal curvatures as a paradigm of the following questions. (1)~By blowing up a sequence of solutions to an integrable system, can one pass to a different integrable system in the limit? (2) Can we determine how the blow-ups of the algebraic and geometric data are related? (3)~If so, then by simultaneously also blowing up the corresponding algebraic data, can one obtain an algebraic geometric correspondence for the limiting data? There exist well-established techniques of blowing up for both sides of the algebraic-geometric correspondence, namely for solutions of partial differential equations \cite{brian-white} and for algebraic curves  \cite {Hartshorne}. In general, relating these two blow-ups appears to be a difficult problem. In this paper we shall blow up certain sequences of cmc tori in such a way as to obtain minimal surfaces in the limit, hence passing from the sinh-Gordon equation to Liouville's equation. Furthermore, in this case we will establish an explicit correspondence between blow-ups of the geometric and the algebraic data. Here, by geometric data we mean the cmc immersions and their conformal factors, which are the solutions of the sinh-Gordon equation. 

More precisely we consider sequences \,$(f_n)_{n\in\mathbb{N}}$\, of smooth cmc immersions of tori into Euclidean 3-space \,$\bbR^3$\, with fixed mean curvature \,$H > 0$\, such that at least one principal curvature explodes. We shall see that this condition on the principal curvatures is necessary so that we get a different integrable system in the limit. We then investigate under which circumstances we may blow up the sequence \,$ f_n $\,
such that  a subsequence of the blown-up \,$f_n$\, converges to a non-trivial surface immersion \,$\tilde{f}$\, into \,$\bbR^3$\,. By ``blowing up'' we mean rescaling both the parameter $ z $ of the plane  and the ambient space \,$\bbR^3$\,. For this purpose we choose a base point \,$z_0$\, and sequences \,$(r_n)_{n\in\mathbb{N}}$\, and \,$(h_n)_{n\in\mathbb{N}}$\, of positive real numbers, and hence introduce a rescaled parameter \,$\tilde{z}$\, and rescaled immersions \,$\tilde{f}_n$\, by
\begin{equation}
	z = z_{0}+r_n\,\tilde{z} \AND \tilde{f}_n(\tilde{z}) = h_n^{-1}\bigr( f_n(z_{0}+r_n\,\tilde{z})-f_n(z_{0})\bigr) \;. 
\end{equation}
In \cite[Theorem~7.3]{brian-white} geometric-analytic methods were used to find a base point \,$z_0$\, such that the \,$(\tilde{f}_n)$\, have a convergent subsequence. However, here we impose further conditions upon the convergence, that we will describe.
If both principal curvatures of the \,$f_n$\, are bounded, then after applying rigid motions in \,$\bbR^3$\, there exists a subsequence of the original \,$(f_n)$\, that converges to another cmc immersion into \,$\bbR^3$\,, whose conformal factor is again a solution of the sinh-Gordon equation. Because we are interested in the case where a blow-up limit produces a solution of a different integrable system, we are led to consider the hypothesis that at least one principal curvature of the \,$f_n$\, diverges. Since the mean curvature is fixed at \,$H$\,, the other principal curvature then also diverges with the opposite sign. Therefore no subsequence of the original sequence \,$(f_n)$\, can converge to a surface immersion in this case. It is therefore natural to consider the blow-up sequence \,$(\tilde{f}_n)$\, in this setting. We choose the scaling sequences \,$r_n$\, and \,$h_n$\, so that the conformal factor of \,$\tilde{f}_n$\, at \,$\tilde{z}=0$\, and the Hopf differential of \,$\tilde{f}_n$\, are bounded and bounded away from zero. This means equivalently that the principal curvatures of the blown up sequence are bounded and bounded away from zero. 
Under an additional condition on the growth behaviour of the conformal factor of \,$f_n$\, at the base point \,$z_0$\,, we will then prove that the limit of this blown up sequence is a surface immersion. The limiting surface will have non-zero principal curvatures of equal magnitude and opposite sign, and hence be minimal. 

We study in detail the example where the limiting minimal surface is a helicoid. This is equivalent to it being ruled, since the only other ruled minimal surfaces are planes, which are excluded here because the principal curvatures are non-zero. The limiting surface being ruled is equivalent to one of its families of asymptotic curves consisting of straight lines, i.e.~having vanishing curvature and torsion. To express this condition in terms of the original cmc immersions \,$f_n$\,, we first note that for \,$n$\, sufficiently large, the principal curvatures of \,$f_n$\, have opposite sign, and therefore the \,$f_n$\, have asymptotic curves. Then the blown-up limit is a helicoid if and only if for one family of asymptotic curves of the \,$f_n$\,, the quotients of both its curvature and its torsion by either principal curvature of the \,$f_n$\, tend to zero. 

Consideration of this example is motivated by the following idea, which will be the subject of further investigation. Since the ruled lines of the blowup have vanishing curvature and torsion, there must exist another blowup by a slower rate in which both curvature and torsion along the corresponding asymptotic curves remain bounded, and at least one of these quantities is bounded away from zero. In all other directions, the curvature will diverge in this slower blowup. It is natural to ask under which circumstances any sequence of nearby asymptotic curves of \,$f_n$\, which start in the neighbourhood of the original one with respect to the slower blow-up converges to the original limiting curve. In this case the whole immersions \,$f_n$\, converge in the slower blow-up to this regular curve. This might provide an example of a soul curve as originally conceived by Pinkall, cf.~\cite[p.~5]{Knoeppel}.





Our most important tool for the investigation of the blow-up of the \,$f_n$\, in the present paper is the algebraic-geometric correspondence mentioned above. We will simultaneously perform the blow-up on the geometric and the algebraic data. Thereby, we will see that on both sides of the algebraic-geometric correspondence the blown up data have a well-defined limit, and this will yield an algebraic-geometric correspondence for the limiting minimal surface. 

The spectral data for the immersions \,$f_n$\, have only finitely many degrees of freedom if the immersions \,$f_n$\, are of \emph{finite type}. Pinkall/Sterling and Hitchin have shown that torus immersions have this property. Our approach is based on the fact that the spectral data for each \,$f_n$\, can be described as a \emph{potential} \,$\zeta_n(\lambda)$\,. This is the \,$\lambda^{-1}$-multiple of
a $(2\times 2)$-matrix-valued polynomial of degree \,$d$\, in the spectral parameter \,$\lambda \in \bbC^\times$\, that depends only on the infinitesimal geometry of the surface at the single point \,$z_0$\,. The number \,$g=d-1$\, is called the \emph{spectral genus} of \,$f_n$\,, and the fact that \,$f_n$\, is of finite type precisely means that \,$g$\, is finite. In order to ensure that the blow-up limit of the potentials yields a potential of finite degree, we shall assume that the spectral genus of the \,$f_n$\, is bounded. Then the original sequence \,$f_n$\, has a subsequence with constant spectral genus \,$g$\,. Hence, in the sequel we shall assume that all \,$f_n$\, have the same spectral genus \,$g$\,. 

To blow up the potentials, we rescale the spectral parameter \,$\lambda$\, with a sequence \,$(\ell_n)$\, of positive real numbers, and then renormalise the rescaled potentials by another sequence \,$(s_n)$\, of positive real numbers. More explicitly, we introduce \,$\tilde{\lambda}$\, and \,$\tilde{\zeta}_n$\, by 
\begin{equation*}
	\lambda = \ell_n\,\tilde{\lambda} \AND \tilde{\zeta}_n(\tilde{\lambda}) = s_n\,\zeta_n(\ell_n\,\tilde{\lambda}) \; .
\end{equation*}
The matrix-valued polynomials \,$\tilde{\lambda}\,\tilde{\zeta}_n(\tilde{\lambda})$\, are of fixed degree \,$g+1$\,, and therefore elements of a finite-dimensional complex vector space. In such spaces, normalised sequences always have convergent subsequences. Hence we may assume that the polynomials \,$\tilde{\lambda}\,\tilde{\zeta}_n(\tilde{\lambda})$\, converge to a polynomial of degree \,$\leq g+1$\,. In general, the limiting polynomials will have only one non-zero coefficient and contain no geometric information. However, we will show that if one chooses \,$\ell_n = r_n^2$\,, we obtain non-trivial polynomials in the limit. It will turn out that these limiting polynomials yield spectral data which describe the minimal surface that is the blow-up limit of the \,$f_n$\,. Thereby we will establish an algebraic-geometric correspondence for minimal surfaces with constant non-vanishing Hopf differential. For this purpose we will transfer Pinkall/Sterling's iterative construction of potentials for cmc tori to minimal surfaces. Because the Gauss-Codazzi equation for minimal surfaces reduces to the Liouville equation, what we have obtained is an algebraic-geometric correspondence for the Liouville equation. We shall see that the corresponding algebraic data are closely related to the algebraic-geometric data for the Korteweg-de Vries (KdV) equation in a sense that will be made precise in Section~\ref{Se:KdV}.

We begin Section~\ref{Se:cmc} with an account of the spectral theory for cmc tori in \,$\R^3$\, tailored to our situation. We then prove in Proposition~\ref{P:cmc:magic-estimates-new} that the growth behaviour of the spectral divisors and of the conformal factor of the immersions, and hence of the potentials, is controlled by the absolute values of the branch points of the spectral curves. 
In Section~\ref{Se:KdV} we detail the
spectral theory for minimal surfaces in \,$\R^3$\,, which occur as the limit of our blown-up sequence of cmc tori.
The spectral theory for cmc tori is well-known, with the exception of the description of the growth behaviour of the potentials in Proposition~\ref{P:cmc:magic-estimates-new}. However, the adaption of polynomial Killing fields/potentials and of the Pinkall-Sterling iteration to minimal surfaces, and hence the algebraic-geometric correspondence for the Liouville equation,
do not appear to be explicitly described in the literature. 
In Section~\ref{Se:blowup1} we prove our main result (Theorem~\ref{T:blowup1:blowup1}), which gives sufficient conditions under which the blowups \,$\tilde{\zeta}_n$\, converge non-trivially. Then the blown-up immersions \,$\tilde{f}_n$\, converge to a minimal surface immersion. 

\section{The integrable system for constant mean curvature surfaces}
\label{Se:cmc}

This paper is concerned with surfaces and curves in \,$\R^3$\,. We denote the standard inner product of \,$\R^3$\, by \,$\langle \,\cdot\,,\,\cdot\,\rangle$\,.
In the context of Wirtinger derivatives \,$f_z, f_{\bar{z}}$\, of functions \,$f$\, mapping into \,$\R^3$\,, we denote by \,$\langle \,\cdot\,,\,\cdot\,\rangle$\, also the
\,$\C$-bilinear continuation of that inner product of \,$\R^3$\, to \,$\C^3$\,. 

{\bf Surface immersions into \,$\R^3$\,.}
We begin by considering a smooth immersion \,$f: X \to \R^3$\, of a 2-dimensional manifold \,$X$\, into \,$\R^3$\,. There exists the structure of a Riemann surface on \,$X$\,
such that the immersion \,$f$\, becomes conformal, and we always regard \,$X$\, as a Riemann surface in this way. We choose a holomorphic coordinate \,$z = x + \mi y$\, of \,$X$\,
and express the fundamental geometric quantities of \,$f$\, locally with respect to \,$z$\,.
Due to \,$f$\, being a conformal immersion with respect to \,$z$\, we have
$$ \|f_x\|^2 = \|f_y\|^2 = 2\langle f_z, f_{\bar z}\rangle > 0 \quad\text{and}\quad \langle f_x, f_y \rangle = 0 \;, $$
hence the  Riemannian metric on \,$X$\, induced by \,$f$\, is locally given by
$$ g = e^\omega\,(\diff x^2+\diff y^2) = e^\omega \,\diff z \, \diff \bar{z} \quad\text{with}\quad \omega = \ln( 2\langle f_z,f_{\bar z} \rangle) \; . $$
The smooth real-valued function \,$\omega$\, is called the \emph{conformal factor} of \,$f$\, (with respect to the coordinate \,$z$\,).
Let \,$N$\, be the positively oriented unit normal field for \,$f$\, (at least locally on the domain of the coordinate \,$z$\,). 
The \emph{mean curvature} \,$H$\, of \,$f$\, is one-half the trace of the shape operator \,$S=g^{-1} h$\, where \,$h$\, is the second fundamental form of \,$f$\,, and therefore given by
$$ H = e^{-\omega}\cdot \tfrac12\langle f_{xx}+f_{yy},N \rangle = \langle f_z, f_{\bar z} \rangle^{-1} \cdot \langle f_{z\bar{z}},N \rangle \; . $$
Moreover the \emph{Hopf differential} \,$Q\,\diff z^2$\, is the \,$\diff z^2$-component of the second fundamental form and therefore given by \,$Q = \langle f_{zz},N \rangle$\,. 
The zeros of \,$Q$\,, i.e.~the points where the second fundamental form of \,$f$\, is diagonal, are called \emph{umbilical points} of \,$f$\,. 
The integrability condition \,$f_{z\bar{z}} = f_{\bar{z}z}$\, for a surface immersion into \,$\R^3$\, is expressed by the equations of Gauss and Codazzi. With respect to the coordinate \,$z$\,, they take the form
\begin{align}
  \label{eq:cmc:gauss}
  2 \omega_{z\bar{z}} + H^2\,e^{\omega} - 4\,|Q|^2\,e^{-\omega} & = 0 \\
  \label{eq:cmc:codazzi}  
  Q_{\bar z} & = e^{\omega} H_z \; .
\end{align}

{\bf The extended frame and the connection form for cmc immersions.}
We now turn our attention to constant mean curvature (cmc) surface immersions \,$f$\, into \,$\R^3$\,, i.e.~to the case where the mean curvature function \,$H$\, is constant
and non-zero.
Then the Codazzi equation~\eqref{eq:cmc:codazzi} shows that the Hopf differential \,$Q\,\diff z^2$\, is holomorphic. Thus \,$f$\, is either totally umbilical (this happens only
if \,$f$\, parameterises part of a round sphere in \,$\R^3$\,), or else the umbilical points of \,$f$\, are discrete. In the latter case, around a non-umbilical point
the coordinate \,$z$\, can be chosen such that the function \,$Q$\, describing the Hopf differential \,$Q\,\diff z^2$\, is constant and non-zero; we will always
choose \,$z$\, in such a way in the sequel. In this setting the Codazzi equation~\eqref{eq:cmc:codazzi} reduces to \,$0=0$\,, so the Gauss equation~\eqref{eq:cmc:gauss}
is the sole condition of integrability for cmc immersions. Note that if we consider \,$H=\tfrac12$\, and \,$|Q|=\tfrac14$\,, then Equation~\eqref{eq:cmc:gauss}
reduces to the \emph{sinh-Gordon equation}
\begin{equation}
\label{eq:cmc:sinhgordon0}
\triangle \omega + \sinh(\omega) = 0 \; .
\end{equation}
For other constant, non-zero choices of \,$H$\, and \,$Q$\,, the Gauss equation~\eqref{eq:cmc:gauss} can be transformed into \eqref{eq:cmc:sinhgordon0} by reparameterisation. For this reason we take the liberty of applying the name \emph{sinh-Gordon equation} to \eqref{eq:cmc:gauss} whenever \,$H$\, and \,$Q$\, are constant and non-zero. 

We note that for any solution \,$(\omega, H, Q)$\, of the Gauss-Codazzi equations \eqref{eq:cmc:gauss}-\eqref{eq:cmc:codazzi} with constant \,$H$\, and any \,$\lambda \in S^1$\,, the triple \,$(\omega, H, \lambda^{-1}Q)$\, is another solution of the Gauss-Codazzi equations. Thus any cmc immersion \,$f$\, belongs to an \emph{associated family} \,$(f_\lambda)_{\lambda \in S^1}$\, of cmc immersions with the same conformal metric and the same mean curvature, but with the rotated Hopf differential \,$\lambda^{-1}\,Q\,\mathrm{d}z^2$\,. 

We now fix a frame for each of the immersions \,$f_\lambda$\,, such a family of frames is called an \emph{extended frame}. The usual choice of such an extended frame is \,$( \tfrac{f_x}{\|f_x\|}, \tfrac{f_y}{\|f_y\|}, N) \in \mathrm{SO}(3)$\,. However, with this choice we would obtain a more complicated Sym-Bobenko formula (compare Equation~\eqref{eq:cmc:sym-bobenko-complicated} to Equation~\eqref{eq:cmc:sym-bobenko}) which is not well-suited to the blow-up construction we shall perform. Instead we shall rotate the tangential part of that extended frame by the phase of \,$\lambda=e^{\mi \theta} \in S^1$\,, obtaining an extended frame \,$(u,v, N) \in \mathrm{SO}(3)$\,, where
$$ u = \cos(\theta)\,\tfrac{f_x}{\|f_x\|} + \sin(\theta)\,\tfrac{f_y}{\|f_y\|} \quad\text{and}\quad v = -\sin(\theta)\,\tfrac{f_x}{\|f_x\|} + \cos(\theta)\,\tfrac{f_y}{\|f_y\|} \; . $$

For the purpose of describing this extended frame and the Sym-Bobenko formula more efficiently, we identify \,$\mathbb{R}^3$\, as an oriented Euclidean space with \,$\mathfrak{su}(2)$\, via
$$ \Phi: (x_1,x_2,x_3) \in \R^3 \; \longleftrightarrow \; \tfrac{\mi}{2} \left( \begin{smallmatrix} x_3 & x_1+\mi x_2 \\ x_1-\mi x_2 & -x_3 \end{smallmatrix} \right) \in \mathfrak{su}(2) \; . $$
The inner product on \,$\mathfrak{su}(2)$\, that corresponds to the usual inner product on \,$\R^3$\, is
$$ \langle X, Y \rangle := -2\, \mathrm{tr}(XY) \quad\text{for \,$X,Y \in \mathfrak{su}(2)$\,} \; . $$
The cross product \,$\times$\, of \,$\mathbb{R}^3$\, corresponds under this identification to the Lie bracket (commutator) of elements of \,$\mathfrak{su}(2)$\,. Note that \,$\mathrm{SU}(2)$\, is the universal covering of \,$\mathrm{SO}(3)$\, via the two-fold covering map 
$$ \Psi: \mathrm{SU}(2) \to \mathrm{SO}(3),\; F \mapsto \bigr(v \mapsto \Phi(F\cdot\Phi^{-1}(v)\cdot F^{-1}) \bigr) \; . $$

We lift the extended frame \,$(u,v,N)$\, by the covering map \,$\Psi$\, to obtain a \,$\lambda$-dependent \,$\mathrm{SU}(2)$-valued map \,$F_\lambda$\,, which we will also call the \emph{extended frame} of \,$f_\lambda$\,. Because the kernel of \,$\Psi$\, is \,$\{\pm \one \} \subset \mathrm{SU}(2)$\,, \,$F_\lambda$\, is determined up to sign by this condition. To fix the sign of \,$F_\lambda$\,, we suppose that the coordinate system of \,$\mathbb{R}^3$\, is chosen such that the extended frame \,$(u,v,N)$\, equals the standard basis \,$(e_1,e_2,e_3)$\, at some base point \,$z_0$\,, and then require that \,$F_\lambda(z_0) = \one$\, holds. In more explicit terms, we have 
\begin{equation}
\label{eq:cmc:frame-SU2}
u = F \tfrac{\mi}{2} \left( \begin{smallmatrix} 0 & 1 \\ 1 & 0 \end{smallmatrix} \right) F^{-1} \;,\quad 
v = F \tfrac{\mi}{2} \left( \begin{smallmatrix} 0 & \mi \\ -\mi & 0 \end{smallmatrix} \right) F^{-1}  
\quad\text{and}\quad 
N = F \tfrac{\mi}{2} \left( \begin{smallmatrix} 1 & 0 \\ 0 & -1 \end{smallmatrix} \right) F^{-1} \; .
\end{equation}
The original basis vectors \,$\tfrac{f_x}{\|f_x\|}$\, and \,$\tfrac{f_y}{\|f_y\|}$\, are obtained from \,$u$\, and \,$v$\, by rotation by the angle \,$-\theta$\,, and therefore it follows from the preceding equations that we also have 
\begin{equation}
\label{eq:cmc:frame-SU2-fxfy}
\tfrac{f_x}{\|f_x\|} = F \tfrac{\mi}{2} \left( \begin{smallmatrix} 0 & \lambda^{-1} \\ \lambda & 0 \end{smallmatrix} \right) F^{-1}
\quad\text{and}\quad
\tfrac{f_y}{\|f_y\|} = F \tfrac{\mi}{2} \left( \begin{smallmatrix} 0 & \mi \lambda^{-1} \\ -\mi \lambda & 0 \end{smallmatrix} \right) F^{-1} \;. 
\end{equation}

The family \,$F_\lambda$\, corresponds to the family of Maurer-Cartan forms \,$\alpha_\lambda = F_\lambda^{-1}\,\mathrm{d}F_\lambda$\,. By an explicit calculation, one can show that 
\begin{equation}
  \label{eq:cmc:alpha}
  \alpha = \alpha_\lambda = \frac14 \begin{pmatrix} \omega_z & 2H\,e^{\omega/2}\,\lambda^{-1} \\ -4Q\,e^{-\omega/2} & -\omega_z \end{pmatrix} \diff z
  + \frac14 \begin{pmatrix} -\omega_{\bar z} & 4\bar{Q}\,e^{-\omega/2} \\ -2H\,e^{\omega/2}\,\lambda & \omega_{\bar z} \end{pmatrix} \diff \bar{z}
\end{equation}
holds. Note that these local 1-forms defined with respect to different coordinates \,$z$\,
patch together to global \,$\mathfrak{sl}(2,\C)$-valued smooth 1-forms on \,$X$\,.

Conversely, if data \,$(\omega,H,Q)$\, with constants \,$H$\, and \,$Q$\, are given, then we can define a family of connection forms \,$\alpha_\lambda$\, by Equation~\eqref{eq:cmc:alpha}. Here \,$\alpha_\lambda$\, is well-defined not only for \,$\lambda\in S^1$\, but for all \,$\lambda\in \C^{\times} $\,. We call \,$\lambda \in \C^{\times} $\, the \emph{spectral parameter}. An explicit calculation shows that the Maurer-Cartan equation
\,$\diff \alpha_\lambda + \tfrac12 [\alpha_\lambda,\alpha_\lambda]=0$\, holds for all \,$\lambda\in \C^{\times} $\, if and only if the sinh-Gordon equation~\eqref{eq:cmc:gauss} holds. It follows that if the data \,$(\omega,H,Q)$\, correspond to a cmc immersion and hence satisfy the sinh-Gordon equation~\eqref{eq:cmc:gauss},
then the initial value problem of the partial differential equation
$$ \diff F_\lambda = F_\lambda\,\alpha_\lambda \quad\text{with}\quad F_\lambda(z_0) = \one $$
has a unique solution \,$F_\lambda: X \to \mathrm{SL}(2,\C)$\,, which we again call the \emph{extended frame}. For \,$\lambda\in S^1$\, it coincides with the extended frame of the immersions \,$f_\lambda$\, defined above. 
Because \,$\alpha_\lambda$\, depends holomorphically on the spectral parameter \,$\lambda \in \C^{\times} $\,, also \,$F_\lambda$\, depends holomorphically on \,$\lambda$\,. 
Note that because \,$\omega$\, is real-valued
and \,$H$\, is real, we have
\begin{equation}
  \label{eq:cmc:alpha-F-reality}
  \alpha_{\bar{\lambda}^{-1}} = -\overline{\alpha_\lambda}^t \quad\text{and therefore}\quad F_{\bar{\lambda}^{-1}} = \overline{F_\lambda}^{t,-1} \; .
\end{equation}
In particular for \,$|\lambda|=1$\,, \,$\alpha_\lambda$\, is an \,$\mathfrak{su}(2)$-valued 1-form, and \,$F_\lambda:X \to \mathrm{SL}(2,\C)$\, takes values in \,$\mathrm{SU}(2)$\,. 

{\bf The Sym-Bobenko formula.}
The following \emph{Sym-Bobenko formula}, see \cite[Section~5]{bobenko}, shows how the immersions \,$f_\lambda$\, can be reconstructed from the extended frame \,$F_\lambda$\,.

\begin{proposition}
  \label{P:cmc:sym-bobenko}
  Let a real-valued, smooth function \,$\omega$\, and constants \,$H \in \R^+$\,, \,$Q \in \C^{\times} $\, be given so that these data satisfy the sinh-Gordon equation~\eqref{eq:cmc:gauss}, and let \,$F_\lambda$\, be the corresponding extended frame. 
  Further choose a \emph{Sym point} \,$\lambda_s \in S^1$\,. Then
  \begin{equation}
  \label{eq:cmc:sym-bobenko}
  f_{\lambda_s} = -\frac{1}{H} \,\mi\,\lambda\,\frac{\partial F_\lambda}{\partial \lambda}\,F_\lambda^{-1} \biggr|_{\lambda=\lambda_s}
  \end{equation}
  is an immersion \,$X \to \mathfrak{su}(2) \cong \R^3$\, with induced metric \,$e^\omega\,\diff z \,\diff \bar{z}$\,, constant mean curvature \,$H$\, and
  Hopf differential \,$\lambda_s^{-1} Q\,\diff z^2$\,. 
  The tangential directions of \,$f_{\lambda_s}$\, are given by
  \begin{equation}
    \label{eq:cmc:sym-bobenko:tangential}
    f_{\lambda_s,x} = e^{\omega/2}\,F_{\lambda_s} \tfrac{\mi}{2} \left( \begin{smallmatrix} 0 & \lambda_s^{-1} \\ \lambda_s & 0 \end{smallmatrix} \right) F_{\lambda_s}^{-1}
    \quad\text{and}\quad
    f_{\lambda_s,y} = e^{\omega/2}\,F_{\lambda_s} \tfrac{\mi}{2} \left( \begin{smallmatrix} 0 & \mi \lambda_s^{-1} \\ -\mi \lambda_s & 0 \end{smallmatrix} \right) F_{\lambda_s}^{-1} \;, 
  \end{equation}
  and a unit normal field for \,$f_{\lambda_s}$\,  is given by
  \begin{equation}
    \label{eq:cmc:sym-bobenko:normal}
    N_{\lambda_s} = F_{\lambda_s} \tfrac{\mi}{2} \left( \begin{smallmatrix} 1 & 0 \\ 0 & -1 \end{smallmatrix} \right) F_{\lambda_s}^{-1} \; .
  \end{equation}
\end{proposition}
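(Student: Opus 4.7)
The proof is a direct computation based on differentiating~\eqref{eq:cmc:sym-bobenko} in $z$, $\bar z$, and $\lambda$, using $\diff F = F\alpha_\lambda$ and the Maurer--Cartan structure. As a preliminary one must check that $g := \mi\lambda\,(\partial_\lambda F)\, F^{-1}$ is skew-Hermitian on $S^1$ so that $f \in \mathfrak{su}(2)$ makes sense. Differentiating the reality condition~\eqref{eq:cmc:alpha-F-reality} in $\bar\lambda$, evaluating at $|\lambda|=1$, and using $F(\lambda) \in \mathrm{SU}(2)$ yields the identity $\lambda^2\,(\partial_\lambda F)\, F^{-1} = F\,(\partial_\lambda F)^{*}$ on $S^1$, from which $g^{*} = -g$ is immediate.

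Next, writing $\alpha_\lambda = \alpha'\,\diff z + \alpha''\,\diff\bar z$ and commuting $\partial_z$ with $\partial_\lambda$, one obtains
\begin{equation*}
\partial_z\bigl(\lambda\,(\partial_\lambda F)\,F^{-1}\bigr) = F\,(\lambda\,\partial_\lambda \alpha')\,F^{-1},
\end{equation*}
and similarly for $\partial_{\bar z}$. The operator $\lambda\,\partial_\lambda$ annihilates every $\lambda$-independent entry of $\alpha'$ and $\alpha''$, so only the $\lambda^{-1}$-entry of $\alpha'$ and the $\lambda$-entry of $\alpha''$ survive. This yields
\begin{equation*}
f_z = \tfrac{\mi\lambda_s^{-1}}{2}\, e^{\omega/2}\, F\, \bigl(\begin{smallmatrix} 0 & 1 \\ 0 & 0 \end{smallmatrix}\bigr)\, F^{-1}, \qquad f_{\bar z} = \tfrac{\mi\lambda_s}{2}\, e^{\omega/2}\, F\, \bigl(\begin{smallmatrix} 0 & 0 \\ 1 & 0 \end{smallmatrix}\bigr)\, F^{-1}.
\end{equation*}
Forming $f_x = f_z + f_{\bar z}$ and $f_y = \mi(f_z - f_{\bar z})$ gives~\eqref{eq:cmc:sym-bobenko:tangential}, while a short trace computation yields $\langle f_z, f_z\rangle = \langle f_{\bar z}, f_{\bar z}\rangle = 0$ and $\langle f_z, f_{\bar z}\rangle = \tfrac12 e^\omega$, confirming that the induced metric is $e^\omega\,\diff z\,\diff\bar z$ and that $f$ is a conformal immersion.

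To read off the second fundamental form I differentiate once more. Using $F_z = F\alpha'$ and $(F^{-1})_z = -\alpha' F^{-1}$ one finds
\begin{equation*}
f_{zz} = \tfrac{\mi\lambda_s^{-1}}{2}\, e^{\omega/2}\, F\,\bigl(\tfrac{\omega_z}{2}\, E_+ + [\alpha', E_+]\bigr)\,F^{-1},
\end{equation*}
with the analogous expression for $f_{z\bar z}$ in terms of $\alpha''$, where $E_+ = \bigl(\begin{smallmatrix} 0 & 1 \\ 0 & 0 \end{smallmatrix}\bigr)$. Matrix arithmetic reduces the inner bracket for $f_{zz}$ to $Q\, e^{-\omega/2}\,\bigl(\begin{smallmatrix} 1 & 0 \\ 0 & -1 \end{smallmatrix}\bigr) + \omega_z\, E_+$, and the bracket for $f_{z\bar z}$ collapses entirely to $\tfrac{H e^{\omega/2}\lambda_s}{2}\,\bigl(\begin{smallmatrix} 1 & 0 \\ 0 & -1 \end{smallmatrix}\bigr)$. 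Pairing with $N = F\,\tfrac{\mi}{2}\bigl(\begin{smallmatrix} 1 & 0 \\ 0 & -1 \end{smallmatrix}\bigr)\,F^{-1}$ via $\langle X, Y\rangle = -2\,\tr(XY)$ then yields $\langle f_{zz}, N\rangle = \lambda_s^{-1} Q$ and $\langle f_{z\bar z}, N\rangle = \tfrac12 H e^\omega$, which together with $\langle f_z, f_{\bar z}\rangle = \tfrac12 e^\omega$ proves the Hopf differential and mean curvature claims. That $N$ is a unit normal of the correct orientation follows from $\|N\|^2 = 1$ together with a direct commutator computation giving $[f_x, f_y] = e^\omega N$.

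The calculation is essentially routine; the only genuinely subtle point is the reality argument, which requires care in differentiating an identity relating $F$ at $\lambda$ to $F$ at $\bar\lambda^{-1}$ in the anti-holomorphic variable. Beyond that, everything reduces to linear algebra in $\mathfrak{sl}(2,\bbC)$, and the main bookkeeping concern is that the identification $\R^3 \cong \mathfrak{su}(2)$ adopted here places a sign on the second coordinate opposite to the standard Pauli convention, so that the commutator really does correspond to the cross product without spurious signs.
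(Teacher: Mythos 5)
Your proposal is correct and follows essentially the same route as the paper's proof: differentiate the Sym--Bobenko expression in $z$ and $\bar z$ to obtain $f_z=\lambda_s^{-1}e^{\omega/2}F\tfrac{\mi}{2}\left(\begin{smallmatrix}0&1\\0&0\end{smallmatrix}\right)F^{-1}$ and its conjugate partner, read off conformality from the trace form, and compute $f_{zz}$, $f_{z\bar z}$ against $N$ to identify the Hopf differential and mean curvature. The only (harmless) variation is that you establish $f\in\mathfrak{su}(2)$ by differentiating the reality condition \eqref{eq:cmc:alpha-F-reality}, where the paper argues more briefly that $\mi\lambda\,\partial_\lambda F$ is the derivative of $F$ along $S^1$ and hence tangent to $\mathrm{SU}(2)$; both are fine, yours being slightly more explicit.
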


Due to the fundamental existence and uniqueness theorem for surfaces,
the \,$f_{\lambda_s}$\, defined by the Sym-Bobenko formula \eqref{eq:cmc:sym-bobenko} coincide up to a rigid motion of \,$\mathbb{R}^3$\, with the associated family \,$(f_\lambda)_{\lambda\in S^1}$\, , 
where \,$f_{\lambda_s}$\, corresponds to the data \,$(\omega,H,\lambda_s^{-1}Q)$\,. In particular for \,$\lambda_s=1$\, we recover the original immersion \,$f=f_{\lambda=1}$\,. 


\begin{proof}
	We now omit the subscript \,${}_{\lambda}$\, from \,$F_\lambda$\,, \,$\alpha_\lambda$\, and the associated quantities. 
  We first note that because \,$\alpha$\, is \,$\mathfrak{su}(2)$-valued on the circle \,$S^1 \ni \lambda$\,,  \,$F$\, takes values in \,$\mathrm{SU}(2)$\, there.
  Thus \,$G = \tfrac{\partial F}{\partial \lambda}$\, is tangential to \,$\mathrm{SU}(2)$\, at \,$\lambda=\lambda_s$\,, and therefore \,$f$\, indeed maps into
  \,$T_{\one} \mathrm{SU}(2) \cong \mathfrak{su}(2)$\,. 
  By differentiating the equation \,$\mathrm{d}F = F\alpha$\, with respect to \,$\lambda$\,, one sees that \,$G$\,
  solves the partial differential equation \,$\mathrm{d}G = G\alpha + F\beta$\, with
  \begin{equation}
    \label{eq:cmc:sym-bobenko:beta}
    \beta = \frac{\partial \alpha}{\partial \lambda} = -\frac12 H \, e^{\omega/2} \begin{pmatrix} 0 & \lambda^{-2}\,\diff z \\ \diff \bar{z} & 0 \end{pmatrix} \; . 
  \end{equation}
  We now calculate
  $$ \mathrm{d}f = -\frac{\mi \lambda}{H} \left(  \diff G\, F^{-1} - G\,F^{-1}\,\diff F\,F^{-1} \right) = -\frac{\mi \lambda}{H} \left(  (G\alpha+F\beta) F^{-1} - G\,F^{-1}\,F\alpha\,F^{-1} \right)
  = -\frac{\mi \lambda}{H} F\,\beta\,F^{-1}\; . $$
  By inserting Equation~\eqref{eq:cmc:sym-bobenko:beta} we obtain
  \begin{equation}
    \label{eq:cmc:sym-bobenko:fzfbarz}
    f_z = \lambda^{-1}\,e^{\omega/2}\, F \tfrac{\mi}{2} \left( \begin{smallmatrix} 0 & 1 \\ 0 & 0 \end{smallmatrix} \right) F^{-1}
    \quad\text{and}\quad
    f_{\bar{z}} = \lambda\,e^{\omega/2}\,  F \tfrac{\mi}{2} \left( \begin{smallmatrix} 0 & 0 \\ 1 & 0 \end{smallmatrix} \right) F^{-1} \; .
  \end{equation}
  Via the equations \,$f_x = f_z + f_{\bar z}$\, and \,$f_y = \mi(f_z-f_{\bar z})$\,, Equations~\eqref{eq:cmc:sym-bobenko:tangential} follow, and then
  Equation~\eqref{eq:cmc:sym-bobenko:normal} follows from \,$N= \tfrac{f_x \times f_y}{\|f_x \times f_y\|}$\,. We also calculate
  $$ 2\langle f_z,f_{\bar z} \rangle = 2e^{\omega} \cdot \left\langle \tfrac{\mi}{2} \left( \begin{smallmatrix} 0 & 1 \\ 0 & 0 \end{smallmatrix} \right),
  \tfrac{\mi}{2} \left( \begin{smallmatrix} 0 & 0 \\ 1 & 0 \end{smallmatrix} \right) \right\rangle = e^\omega \;, $$
  whence it follows that \,$f$\, is a conformal immersion with the induced metric \,$e^\omega\,\diff z \,\diff \bar{z}$\,. 

  We moreover obtain from Equations~\eqref{eq:cmc:sym-bobenko:fzfbarz} and \eqref{eq:cmc:alpha}, where we write \,$\alpha_\lambda = U_\lambda\,\mathrm{d}z + V_\lambda\,\mathrm{d}\bar{z}$\,:
  \begin{align*}
    f_{zz} & = \tfrac12 \omega_z\,f_z + [FUF^{-1}, f_z] = \omega_z\,f_z + \lambda^{-1}\,Q\,N \\
    f_{z\bar{z}} & = \tfrac12 \omega_{\bar{z}}\,f_z + [FVF^{-1}, f_z] = \tfrac12 \,e^{\omega}\,H\,N \\
    f_{\bar{z}\bar{z}} & = \tfrac12 \omega_{\bar{z}}\,f_{\bar{z}} + [FVF^{-1}, f_{\bar{z}}] = \omega_{\bar{z}}\,f_{\bar{z}} + \lambda\,\bar{Q}\,N \; .
  \end{align*}
  Thus the Hopf differential of \,$f$\, is given by \,$\langle f_{zz},N \rangle \diff z^2 = \lambda^{-1}\,Q\,\diff z^2$\, and the mean curvature of \,$f$\, is
  $$ \langle f_z, f_{\bar z} \rangle^{-1} \cdot \langle f_{z\bar{z}},N \rangle = 2e^{-\omega} \cdot \tfrac12\,e^{\omega}\,H = H \; . $$
\end{proof}

Note that the usual extended frame \,$( \tfrac{f_x}{\|f_x\|}, \tfrac{f_y}{\|f_y\|}, N) \in \mathrm{SO}(3)$\, we mentioned above corresponds to the \,$\mathrm{SU}(2)$-valued map \,$F_{\textrm{usual},\lambda}$\, characterised by
\begin{equation}
	\label{eq:cmc:usual-frame-SU2}
	\tfrac{f_x}{\|f_x\|} = F_{\textrm{usual}} \tfrac{\mi}{2} \left( \begin{smallmatrix} 0 & 1 \\ 1 & 0 \end{smallmatrix} \right) F_{\textrm{usual}}^{-1} \;,\quad 
	\tfrac{f_y}{\|f_y\|} = F_{\textrm{usual}} \tfrac{\mi}{2} \left( \begin{smallmatrix} 0 & \mi \\ -\mi & 0 \end{smallmatrix} \right) F_{\textrm{usual}}^{-1}  
	\quad\text{and}\quad 
	N = F_{\textrm{usual}} \tfrac{\mi}{2} \left( \begin{smallmatrix} 1 & 0 \\ 0 & -1 \end{smallmatrix} \right) F_{\textrm{usual}}^{-1} \; .
\end{equation}
By comparing these equations with Equations~\eqref{eq:cmc:frame-SU2-fxfy} we see that our extended frame \,$F_\lambda$\, is related to \,$F_{\textrm{usual},\lambda}$\, by \,$F_\lambda = F_{\textrm{usual},\lambda}\cdot A^{-1}$\, where \,$A=\left( \begin{smallmatrix} \lambda^{-1/2} & 0 \\ 0 & \lambda^{1/2} \end{smallmatrix} \right)\in \mathrm{SU}(2)$\,. The extended frame \,$F_{\textrm{usual}}$\, gives rise to the connection form
\begin{equation}
\label{eq:cmc:alpha-usual}
\alpha_{\textrm{usual}} = F_{\textrm{usual}}^{-1}\,\mathrm{d}F_{\textrm{usual}} =  \frac14 \begin{pmatrix} -\omega_z & 4Q\,e^{-\omega/2}\,\lambda^{-1} \\ -2H\,e^{\omega/2} & \omega_z \end{pmatrix} \diff z
+ \frac14 \begin{pmatrix} \omega_{\bar z} & 2H\,e^{\omega/2} \\ -4\bar{Q}\,e^{-\omega/2}\,\lambda & -\omega_{\bar z} \end{pmatrix} \diff \bar{z}
\end{equation}
and to the alternate Sym-Bobenko formula also described in \cite[Section~5.1, Equation~(5.4)]{bobenko}
\begin{equation}
	\label{eq:cmc:sym-bobenko-usual}
	f_\lambda 
	= -\frac{1}{H}\left( F_{\textrm{usual}} \frac{\mi}{2}\begin{pmatrix} 1 & 0 \\ 0 & -1 \end{pmatrix} F_{\textrm{usual}}^{-1} + \mi \,\lambda\,\frac{\partial F_{\textrm{usual}}}{\partial \lambda}\,F_{\textrm{usual}}^{-1} \right) \; .
\end{equation}
In this paper we are interested in constructing convergent blow-ups of sequences of cmc immersions, which we describe via the Sym-Bobenko formula. It will turn out that the rates of blow-up of the two summands in the Sym-Bobenko formula~\eqref{eq:cmc:sym-bobenko-usual} are different. Therefore if we were to base our blow-up on \eqref{eq:cmc:sym-bobenko-usual}, to obtain convergence to a immersion we would need to apply a parallel translation to \,$f_\lambda$\, to counteract the effect of the additional term \,$F_{\textrm{usual}} \frac{\mi}{2}\left(\begin{smallmatrix} 1 & 0 \\ 0 & -1 \end{smallmatrix}\right) F_{\textrm{usual}}^{-1}$\,, where the length of the translation is unbounded. In the following remark we explain how this parallel translation can be interpreted in terms of the transition to the parallel cmc surface.  

\begin{remark}
  It is a classically well-known fact that for any constant mean curvature \,$(H\neq 0)$-surface, one of the two parallel surfaces in the distance \,$1/H$\,  is again a constant mean curvature surface. If the original cmc surface has the data \,$(\omega,H,Q)$\,, then the parallel cmc surface has the data \,$(\check{\omega}, \check{H}, \check{Q}) = (-\omega, 2|Q|, \tfrac{HQ}{2|Q|})$\,.

  In the situation described above, this fact is reflected in the following way:

	\begin{enumerate}
	\item 
	The connection form associated to the parallel cmc surface by Equation~\eqref{eq:cmc:alpha} is given by \,$\check{\alpha}_\lambda = -(g.\alpha_\lambda)^t$\,, where \,$g.\alpha_\lambda$\, denotes the gauge transformation of the connection form \,$\alpha_\lambda$\, associated to the original cmc surface with
	\begin{equation}
		\label{eq:cmc:dual-gauge-g}
		g = \begin{pmatrix} (\tfrac{Q}{|Q|}\lambda)^{-1/2} & 0 \\ 0 & (\tfrac{Q}{|Q|}\lambda)^{1/2} \end{pmatrix} \; .
	\end{equation}
	The extended frame of the parallel cmc surface is given by 
	\,$\check{F} = F^{t,-1}\,g^{-1}$\,, and the Sym-Bobenko formula \eqref{eq:cmc:sym-bobenko} for the parallel cmc surface is 
	  \begin{equation}
		\label{eq:cmc:sym-bobenko-complicated}
		\check{f} 
		= \frac{1}{\check{H}}\left( F \frac{\mi}{2}\begin{pmatrix} 1 & 0 \\ 0 & -1 \end{pmatrix} F^{-1} + \mi \,\lambda\,\frac{\partial F}{\partial \lambda}\,F^{-1} \right)^t \; .
	\end{equation}
	  By comparison with Proposition~\ref{P:cmc:sym-bobenko} we see that \,$-\tfrac{\check{H}}{H}\,\check{f}^t = f-\tfrac{1}{H}N$\, holds, and hence up to the
	rotation-reflection \,$X\mapsto -X^t$\, of \,$\R^3$\, and the scaling factor \,$\tfrac{\check{H}}{H} = 2\tfrac{|Q|}{H}$\,, \,$\check{f}$\, is the
	parallel surface of the cmc surface described by \,$f$\, that is also a cmc surface.
	
	\begin{proof}
		By substituting \,$(\omega, H, Q) \mapsto (\check{\omega}, \check{H}, \check{Q}) = (-\omega, 2|Q|, \tfrac{HQ}{2|Q|})$\, in Equation~\eqref{eq:cmc:alpha} one obtains for the connection form \,$\check{\alpha}_\lambda$\, associated to the parallel cmc surface
		\begin{equation}
		\label{eq:cmc:check-alpha}
		\check{\alpha}_\lambda = \frac14 \begin{pmatrix} -\omega_z & 4|Q|\,e^{-\omega/2}\,\lambda^{-1} \\ -2H\tfrac{Q}{|Q|}\,e^{\omega/2} & \omega_z \end{pmatrix} \diff z
		+ \frac14 \begin{pmatrix} \omega_{\bar z} & 2H\tfrac{\bar{Q}}{|Q|}\,e^{\omega/2} \\ -4|Q|\,e^{-\omega/2}\,\lambda & -\omega_{\bar z} \end{pmatrix} \diff \bar{z} \;.
		\end{equation}
		
		For general \,$g$\, that can vary in both \,$z$\, and \,$\lambda$\,, the \emph{gauge transformation} by \,$g$\, is defined by
		$$ g.\alpha = g^{-1}\,\alpha\,g + g^{-1}\diff g \quad\text{and}\quad g.F = F\,g \; , $$
		so that the partial differential equation \,$\diff (g.F) = (g.F)(g.\alpha)$\, is maintained. The specific \,$g$\, given by Equation~\eqref{eq:cmc:dual-gauge-g} does not depend on \,$z$\,, and therefore we have \,$g.\alpha = g^{-1}\,\alpha\,g$\,, and by computing the right-hand side expression we see that \,$-(g.\alpha_\lambda)^t = \check{\alpha}_\lambda$\, holds. This equation implies \,$\check{F}=(g.F)^{t,-1} = F^{t,-1}\,g^{-1}$\, because \,$g$\, is diagonal. By substituting \,$F \mapsto \check{F}$\, in Equation~\eqref{eq:cmc:sym-bobenko}, we see that the Sym-Bobenko formula for the parallel cmc surface is indeed given by Equation~\eqref{eq:cmc:sym-bobenko-complicated}. 
	\end{proof}	
	\item
		Comparing Equation~\eqref{eq:cmc:check-alpha} to Equation~\eqref{eq:cmc:alpha-usual}, we see that if we rotate the coordinate \,$z$\, such that \,$Q>0$\, holds, then \,$\check{\alpha}_\lambda = \alpha_{\mathrm{usual},\lambda}$\, holds. 
		In this sense the transition from the ``usual'' extended frame \,$F_{\mathrm{usual}}$\, to the rotated extended frame \,$F$\, corresponds to the transition from a cmc surface to its parallel one.
	\end{enumerate}
\end{remark}

{\bf Spectral data.}
We now describe spectral data for this integrable system in the case where \,$X=\mathbb{C}$\, and where the solution \,$\omega$\, of the sinh-Gordon equation~\eqref{eq:cmc:gauss} is (at least) simply-periodic,
i.e.~there exists a (minimal) period \,$T\in \C^\times$\, such that \,$\omega(z+T)=\omega(z)$\, holds for all \,$z$\,. In this situation, the corresponding connection form \,$\alpha$\, is likewise periodic, but in general the extended frame \,$F$\, does not need to be. Its departure from being periodic is measured by the \emph{monodromy (with base point \,$z_0$\,)} \,$M_{z_0}(\lambda)
= F_\lambda(z_0)^{-1} \cdot F_\lambda(z_0+T)$\,. The dependence of the monodromy on the base point \,$z_0$\, is described by the differential equation
\,$\diff M = [M, \alpha]$\,, and consequently we have
\begin{equation}
\label{eq:cmc:monodromy-basepoint}
M_{z_1}(\lambda) = F_\lambda(z_1)^{-1}\,M_{z_0}(\lambda)\,F_\lambda(z_1) \quad\text{for any other base point \,$z_1$\,.}
\end{equation}
It follows that the eigenvalues of \,$M_{z_0}(\lambda)$\, and the holomorphic function \,$\Delta(\lambda) = \tr M_{z_0}(\lambda)$\,
do not depend on the choice of the base point \,$z_0$\,.
Note that this trace function \,$\Delta(\lambda)$\, (unlike the trace function of a polynomial Killing field as described below) is a non-polynomial analytic function in \,$\lambda$\,, and that the corresponding discriminant function \,$\tfrac12(\Delta(\lambda)^2 -4)$\, always has infinitely many zeros, which accumulate near \,$\lambda=0$\, and \,$\lambda=\infty$\,, see \cite[Proposition~6.5(1)]{habil}.  We collect the eigenvalues \,$\mu$\, of \,$M_{z_0}(\lambda)$\, in the \emph{multiplier curve}  
$$ \underline{\Sigma} = \bigr\{ (\lambda,\mu) \in \C^{\times}  \times \C^{\times}  \,\bigr|\, \mu^2-\Delta(\lambda)\,\mu + 1 = 0 \bigr\} \; . $$
The multiplier curve is a possibly singular complex curve embedded in \,$\C^{\times}  \times \C^{\times} $\, with infinite arithmetic genus. 
It is hyperelliptic over
\,$\C^{\times} $\, in the sense that the holomorphic map \,$\underline{\Sigma} \to \C^{\times} , \; (\lambda,\mu) \mapsto \lambda$\, is a branched, two-fold covering map,
and the holomorphic involution \,$\underline{\sigma}: \underline{\Sigma} \to \underline{\Sigma},\; (\lambda,\mu) \mapsto (\lambda,\mu^{-1})$\, interchanges the two
sheets of this covering map. The reality condition \eqref{eq:cmc:alpha-F-reality} implies
\begin{equation}
  \label{eq:cmc:M-reality}
  M_{z_0}(\bar{\lambda}^{-1}) = \overline{M_{z_0}(\lambda)}^{t,-1} \quad\text{for \,$\lambda\in\C^{\times} $\,}
\end{equation}
and therefore
\,$\underline{\Sigma}$\, also has an anti-holomorphic involution \,$\underline{\rho}: \underline{\Sigma} \to \underline{\Sigma}, \; (\lambda,\mu) \mapsto (\bar{\lambda}^{-1},
\bar{\mu}^{-1})$\, which commutes with \,$\underline{\sigma}$\,.

For a fixed base point \,$z_0$\,, the eigenvectors of \,$M_{z_0}(\lambda)$\, define a holomorphic line bundle \,$\underline{\Lambda}_{z_0}$\,
on the complex curve \,$\underline{\Sigma}$\,. If we write \,$M_{z_0}(\lambda) = \left( \begin{smallmatrix} a(\lambda) & b(\lambda) \\ c(\lambda) & d(\lambda) \end{smallmatrix}
\right)$\, with the holomorphic functions \,$a,b,c,d: \C^{\times}  \to \C$\,, then eigenvectors \,$(v_1,v_2)^t \in \C^2$\, of \,$M_{z_0} (\lambda)$\, corresponding to the eigenvalue \,$\mu$\,
are characterised by either of the two equivalent equations
\begin{align*}
  (a(\lambda)-\mu)v_1 + b(\lambda)v_2 & = 0 \;, \\
  c(\lambda)v_1 + (d(\lambda)-\mu)v_2 & = 0 \;.
\end{align*}
It follows that \,$(1,\tfrac{\mu-a(\lambda)}{b(\lambda)})$\, and \,$(\tfrac{\mu-d(\lambda)}{c(\lambda)},1)$\, are meromorphic sections of \,$\underline{\Lambda}_{z_0}$\,. 

We shall follow the approach of Hitchin \cite{Hi} and define the open eigenline curve \, $\Sigma ^\circ $\, of the monodromy \,$M_{z_0}(\lambda)$\, as the 2-sheeted covering of  \,$\lambda\in \mathbb C ^\times $\, which is ramified to order \,$ n $\, at \,$\lambda $\, precisely when the eigenlines of the \,$(2\times 2) $-matrix \,$M_{z_0} (\lambda) $\, agree to order \,$n$\,. To measure this order of agreement of the eigenlines, as in \cite{Hi} we first consider the smooth open hyperelliptic curve which is the 2-sheeted covering of \,$\lambda\in\C ^\times$\, with simple branch points at the odd order roots of the trace\, $\Delta (\lambda) $\, of the monodromy. These roots are naturally considered as the branch points of the eigenvalue function \,$\mu $\, of the monodromy. For each point \,$ z_0\in\C $\,  this hyperelliptic curve supports a line bundle \,$ \Lambda_{z_0} $\, defined by the eigenlines of the monodromy with base point \,$ z_0$\,. Denoting the hyperelliptic involution by \,$\sigma $\,, then as detailed in \cite{Hi}, the fact that for \,$\lambda\in S ^ 1 $\, the monodromy is valued in \,$\mathrm{SU}(2) $\, gives rise to a symplectic form that defines a section  of \,$\Lambda_{z_0} ^*\otimes\sigma ^*(\Lambda_{z_0} ^*) $\,  over the hyperelliptic curve and we define the order of agreement of the eigenlines as the order of vanishing of this section. The open eigenline curve \,$\Sigma ^\circ $\,  has the hyperelliptic involution \,$\sigma: \Sigma^\circ \to \Sigma^\circ$\,
that interchanges the two sheets of the covering \,$\lambda$\,. The reality condition \eqref {eq:cmc:M-reality} ensures that \,$\Sigma ^\circ $\, also has an anti-holomorphic involution \,$\rho: \Sigma^\circ \to \Sigma^\circ$\, with \,$\sigma\circ \rho = \rho \circ \sigma$\, and \,$\lambda\circ \rho = \bar{\lambda}^{-1}$\,.

Hitchin considered the case where the solution \,$\omega$\, of the sinh-Gordon equation is doubly-periodic and showed \cite[Proposition 2.3]{Hi}  that for such solutions there are only finitely many \,$\lambda\in\C ^\times$\, at which the trace\, $\Delta (\lambda) $\, of the monodromy has an odd-order root. 

Not all simply-periodic \,$\omega$\, have trace functions \,$\Delta(\lambda)$\, with only finitely many odd-order roots, 
but we henceforth restrict our attention to those which do. 
An investigation of the asymptotic behaviour of the monodromy \,$M_{z_0}(\lambda)$\, near \,$\lambda=0$\, and \,$\lambda=\infty$\,,
see \cite[Section~3]{Hi} or \cite[Sections 4, 5]{habil},
then shows that the open eigenline curve has finite arithmetic genus  (\,$\dim H^1(\Sigma^\circ,\mathcal{O}) <  \infty$\,): it can be compactified at \,$\lambda=0$\, and \,$\lambda=\infty$\, to give a compact complex curve \,$\Sigma$\, called the  \emph {eigenline curve} or  \emph {spectral curve} of \,$f$\, or of \,$\omega$\,. Thus the \,$\omega$\, we consider are precisely those for which the eigenline curve has finite genus. 
Such  \,$\omega $\, and the corresponding cmc immersions \,$f $\, are said to be of \emph {finite type}. 
Furthermore, the asymptotics cited above show that the compactification \,$\Sigma$\, adds only a single smooth point at each of \,$\lambda=0$\, and \,$\lambda=\infty$\, to \,$\Sigma^\circ$\,. These
two added points are then regular branch points of \,$\Sigma$\,. It follows that \,$\Sigma$\, can be realised as a sub-variety in \,$\mathbb{P}^1 \times \mathbb{P}^1$\, as
\begin{equation}
  \label{eq:cmc:Sigma}
  \Sigma = \bigr\{ (\lambda,\nu)\in \mathbb{P}^1 \times \mathbb{P}^1 \,\bigr|\, \nu^2 = \lambda\,a(\lambda) \bigr\} \;,
\end{equation}
where \,$a(\lambda)$\, is a polynomial in \,$\lambda$\, of even degree \,$2g$\, and \,$g=g(\Sigma)$\, is the arithmetic genus of \,$\Sigma$\,.
We have \,$a(0)\neq 0$\, and we normalise \,$a(\lambda)$\, such that \,$a(0)=-\tfrac{1}{2}HQ$\, holds. We choose this normalisation so that the lowest coefficients of the polynomial Killing field are equal to certain entries of the connection form \eqref{eq:cmc:alpha}, see the calculation leading to Equation~\eqref{eq:cmc:xi-vm1w0} below. 
The reality condition on \,$\Sigma$\, implies
\begin{equation}
  \label{eq:cmc:a-reality}
  a(\lambda) = \lambda^{2g} \cdot \overline{a(\bar{\lambda}^{-1})} \; .
\end{equation}
The involutions \,$\sigma$\, and \,$\rho$\, are given by
$$ \sigma: \Sigma \to \Sigma, \; (\lambda,\nu) \mapsto (\lambda,-\nu) \AND \rho: \Sigma \to \Sigma, \;  (\lambda,\nu) \mapsto (\bar{\lambda}^{-1},  \bar{\lambda}^{-(g+1)}\bar{\nu}) \; . $$

We remark that the open eigenline curve \,$\Sigma ^\circ $\, coincides with the
 \emph{\,$\underline{\Lambda}_{z_0}$-halfway desingularisation} of \,$\underline{\Sigma}$\,, introduced in \cite[Section~4]{KLSS} as the maximal one-sheeted, branched covering of \,$\underline{\Sigma}$\, to which the generalised divisor
corresponding to the line bundle \,$\underline{\Lambda}_{z_0}$\, can be lifted. Hence an alternative approach is to introduce the open eigenline curve \,$\Sigma ^\circ $\, directly as the $\underline{\Lambda}_{z_0}$-halfway desingularisation of \,$\underline{\Sigma}$\,. 
Finite type \,$\omega $\, and \,$f $ are then those for which this curve has finite arithmetic genus. 

{\bf Polynomial Killing fields.}
It was shown in \cite[Section~4]{KLSS} that for every holomorphic function \,$\varphi$\, on \,$\Sigma^\circ$\, there exists one and only one holomorphic
\,$(2\times 2)$-matrix valued function \,$N(\lambda)$\, in \,$\lambda \in \mathbb{C}^\times$\, such that \,$\Lambda_{z_0}$\, is the eigenline bundle of \,$N$\, and  \,$\varphi$\, is the
corresponding eigenfunction, meaning that \,$N(\lambda)s = \varphi(\lambda) s$\, holds for every holomorphic section \,$s$\, of \,$\Lambda_{z_0}$\, over \,$\Sigma^\circ$\,.
If we apply this statement to
the original eigenfunction \,$\mu$\, on the multiplier curve \,$\underline{\Sigma}$\,, we recover the original monodromy \,$M_{z_0}$\, which gave rise to \,$\underline{\Sigma}$\,.
But we now apply the statement to the anti-symmetric holomorphic function \,$\varphi = \tfrac{\nu}{\lambda}$\, on \,$\Sigma$\,. The resulting \,$(2\times 2)$-matrix-valued
holomorphic function \,$\xi_{z_0} = \xi_{z_0}(\lambda)$\,, which satisfies
\begin{equation}
\label{eq:cmc:xi-eigenfunction}
\xi_{z_0}(\lambda)s = \tfrac{\nu}{\lambda}s \quad\text{for every holomorphic section \,$s$\, of
\,$\Lambda_{z_0}$\, over \,$\Sigma^\circ$\,,} 
\end{equation}
is called \emph{the polynomial Killing field for \,$\omega$\,} (at the base point \,$z_0$\,). Then \,$\lambda\,\xi_{z_0}(\lambda)s = \nu s$\, is holomorphic, and this equation shows that the polynomial Killing field \,$\xi_{z_0}$\, encodes the information of the eigenvalues \,$\nu$\, and therefore of the spectral curve \,$\Sigma$\, and also of the eigenline bundle given by the holomorphic section \,$s$\,. 

Because the eigenfunction \,$\tfrac{\nu}{\lambda}$\, is
anti-symmetric with respect to the hyperelliptic involution of \,$\Sigma$\,, \,$\xi_{z_0}(\lambda)$\, is trace-free, i.e.~\,$\xi_{z_0}$\, maps into \,$\mathfrak{sl}(2,\mathbb{C})$\,.
Because \,$\Sigma$\, is a compact complex curve (after the compactification described above), \,$\xi_{z_0}$\, is a polynomial in \,$\lambda$\, and \,$\lambda^{-1}$\,, 
and the equation \,$\det(\xi_{z_0}) = \tfrac{\nu}{\lambda}\cdot \tfrac{-\nu}{\lambda} = -\tfrac{1}{\lambda}a(\lambda)$\, when considered near \,$\lambda=0$\, shows that
the lowest power of \,$\lambda$\, that occurs in \,$\xi_{z_0}$\, is \,$\lambda^{-1}$\,. A more precise investigation of the asymptotic behaviour of \,$M_{z_0}(\lambda)$\,
near \,$\lambda=0$\, in fact shows \,$\Res_{\lambda=0}(\xi_{z_0} \diff \lambda) = \tfrac12\,H\,e^{\omega(z_0)/2} \left( \begin{smallmatrix} 0 & 1 \\ 0 & 0 \end{smallmatrix} \right)$\,. Additionally the reality conditions imply the following reality
condition for \,$\xi_{z_0}$\,:
\begin{equation}
  \label{eq:cmc:xi-reality}
  \xi_{z_0}(\lambda) = -\lambda^{g-1}\,\overline{\xi_{z_0}(\bar{\lambda}^{-1})}^t \; .
\end{equation}
Therefore the highest power of \,$\lambda$\, that occurs in \,$\xi_{z_0}$\, is \,$\lambda^g$\,. To summarise, \,$\xi_{z_0}$\, is of the form
$$ \xi_{z_0}(\lambda) = \sum_{k=-1}^g \xi_{z_0,k}\,\lambda^k \quad\text{with}\quad \xi_{z_0,k} = \begin{pmatrix} u_k & v_k \\ w_k & -u_k \end{pmatrix} \in \mathfrak{sl}(2,\mathbb{C}) \quad\text{for}\quad k\in\{-1,\dotsc,g\} \; , $$
where
$$ \xi_{z_0,-1} = \begin{pmatrix} 0 & \tfrac12\,H\,e^{\omega(z_0)/2} \\ 0 & 0 \end{pmatrix}\quad\text{and}\quad \xi_{z_0,g-(k+1)} = -\overline{\xi_{z_0,k}}^t \quad\text{for}\quad k\in\{-1,\dotsc,g\} \; . $$
Note that \,$\lambda\,\xi_{z_0}(\lambda)$\, is a polynomial in \,$\lambda$\, and \,$\lambda\,\xi_{z_0}\,s = \nu\,s$\, holds for any holomorphic section of \,$\Lambda_{z_0}$\,,
hence we have
\begin{equation}
\label{eq:cmc:det-xi}
-\lambda\,\det(\xi_{z_0}) = -\lambda^{-1}\,\det(\lambda\,\xi_{z_0}) = -\lambda^{-1}\,\nu\,(-\nu) = a(\lambda)
\end{equation}
and thus for \,$\lambda=0$\,: \,$v_{-1}\cdot w_0 = a(0) = -\tfrac12\,HQ$\,. We thus obtain
\begin{equation}
  \label{eq:cmc:xi-vm1w0}
  v_{-1} = \tfrac12\,H\,e^{\omega(z_0)/2} \quad\text{and}\quad w_0 = -Q\,e^{-\omega(z_0)/2} \; .
\end{equation}
Concerning the dependence of the polynomial Killing field \,$\xi_{z_0}$\, on the base point \,$z_0$\,, we note that for a different base point \,$z_1$\, we
have \,$\Lambda_{z_1} = F(z_1)^{-1}\,\Lambda_{z_0}$\, due to Equation~\eqref{eq:cmc:monodromy-basepoint}, and therefore
\begin{equation}
  \label{eq:cmc:xi-basepointchange}
  \xi_{z_1} = F(z_1)^{-1}\,\xi_{z_0}\,F(z_1) \; .
\end{equation}
Hence concerning differentiation with respect to the base point \,$z$\,, the family \,$\xi=\xi_z$\, of polynomial Killing fields fulfils the differential equation
\begin{equation}
  \label{eq:cmc:xi-dgl}
  \diff \xi + [\alpha_\lambda, \xi] = 0 \; .
\end{equation}
By decomposing this differential equation with respect to powers of \,$\lambda$\, and entries of the \,$(2\times 2)$-matrices, one sees that the polynomial
Killing field \,$\xi$\, can be reconstructed from the ``initial condition'' \,$v_{-1} = \tfrac12\,H\,e^{\omega/2}$\, by an iterative process. This process was
introduced by Pinkall/Sterling in \cite{PS:89} in the course of their proof that cmc tori are of finite type, and is now called the \emph{Pinkall-Sterling iteration}:

\begin{proposition}
  \label{P:cmc:ps}
  Let a solution \,$(\omega,H,Q)$\, of the sinh-Gordon equation~\eqref{eq:cmc:gauss} with a smooth real-valued function \,$\omega$\, and constants \,$H>0$\,, \,$Q\in \C^{\times} $\,
  be given. We suppose that this solution is of finite type \,$g$\, and write the corresponding polynomial Killing field \,$\xi=\xi_z$\, in the form
  $$ \xi = \sum_{k=-1}^{g} \xi_k\,\lambda^k \quad\text{with}\quad \xi_k = \begin{pmatrix} u_k & \tau_k\,e^{\omega/2} \\ \sigma_k\,e^{\omega/2} & -u_k \end{pmatrix} \;, $$
  where \,$u_k, \tau_k, \sigma_k$\, are smooth, complex-valued functions in \,$z$\,. Then we have \,$\tau_{-1} = \tfrac12 H$\,, \,$u_{-1} = \sigma_{-1} = 0$\,
  and for every \,$k\geq 0$\,:
  \begin{align}
    \label{eq:cmc:ps:ps-tau}
    \tau_{k,z} = -\tfrac{1}{2Q} (u_{k,zz} - \omega_{z} u_{k,z}) \qquad & \qquad \tau_{k,\bar{z}} = 2\bar{Q}\,e^{-\omega}\,u_k \\
    \label{eq:cmc:ps:ps-u}  
    u_{k+1} & = \tfrac{1}{H}(\tau_{k,z}+\omega_{z}\,\tau_k) \\
    \label{eq:cmc:ps:ps-sigma}  
    \sigma_{k+1} & = -\tfrac{1}{\bar{Q}} (u_{k+1,\bar{z}} + \tfrac12 H \,e^{\omega}\,\tau_k) \; . 
  \end{align}
  Moreover every \,$u_k$\, solves the linearisation of the sinh-Gordon equation~\eqref{eq:cmc:gauss}:
  \begin{equation}
    \label{eq:cmc:ps:uk-lin-gauss}
    2u_{k,z\bar{z}} + \left( H^2 e^{\omega} -4|Q|^2 e^{-\omega} \right) u_k = 0 \; .
  \end{equation}
\end{proposition}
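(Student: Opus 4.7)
The plan is to derive every claim from the evolution equation \eqref{eq:cmc:xi-dgl}, $\diff\xi + [\alpha_\lambda,\xi] = 0$, by expanding it in powers of $\lambda$ and reading off matrix entries. Writing $\alpha_\lambda = A\,\diff z + B\,\diff\bar z$ with $A = A_0 + \lambda^{-1}A_{-1}$ and $B = B_0 + \lambda B_1$ the $\lambda$-independent pieces extracted from \eqref{eq:cmc:alpha}, the Killing field equation decouples into $\xi_z + [A,\xi] = 0$ and $\xi_{\bar z} + [B,\xi] = 0$. Substituting $\xi = \sum_{k=-1}^{g} \xi_k\,\lambda^k$ and matching coefficients of $\lambda^k$ produces the two fundamental recursions
\begin{align*}
\xi_{k,z} + [A_0,\xi_k] + [A_{-1},\xi_{k+1}] &= 0 \,,\\
\xi_{k,\bar z} + [B_0,\xi_k] + [B_1,\xi_{k-1}] &= 0 \,,
\end{align*}
with the conventions $\xi_{-2}=\xi_{g+1}=0$. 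Everything that follows is obtained by extracting the right matrix entry at the right index.

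The initial conditions come from $\lambda^{-2}$ in the $dz$-recursion, which reduces to $[A_{-1},\xi_{-1}] = 0$. Since $A_{-1}$ is a nilpotent supported in the $(1,2)$-entry, its centraliser in $\mathfrak{sl}(2,\C)$ consists of strictly upper-triangular matrices, forcing $u_{-1}=\sigma_{-1}=0$; the normalisation $v_{-1}=\tfrac12 H e^{\omega(z_0)/2}$ from \eqref{eq:cmc:xi-vm1w0} then fixes $\tau_{-1}=\tfrac12 H$. For the iteration step at $k\geq 0$ I would extract four scalar identities. The $(1,2)$-entry of the $d\bar z$-recursion reads directly $\tau_{k,\bar z}=2\bar Q e^{-\omega}u_k$; the $(1,2)$-entry of the $dz$-recursion gives $u_{k+1}=\tfrac{1}{H}(\tau_{k,z}+\omega_z\tau_k)$; the $(1,1)$-entry of the $d\bar z$-recursion indexed at $k+1$ yields $\sigma_{k+1}$ in the claimed form; and the $(2,1)$-entry of the $dz$-recursion gives $\sigma_{k,z}=2Q e^{-\omega}u_k$. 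The formula for $\tau_{k,z}$ then arises as a compatibility identity: the $(1,1)$-entry of the $dz$-recursion provides the alternative expression $\sigma_{k+1}=-\tfrac{2}{He^{\omega}}(u_{k,z}+Q\tau_k)$, and differentiating this in $z$ and equating the result with $\sigma_{k+1,z}=2Qe^{-\omega}u_{k+1}$ collapses — after using $Hu_{k+1}=\tau_{k,z}+\omega_z\tau_k$ — to precisely $\tau_{k,z}=-\tfrac{1}{2Q}(u_{k,zz}-\omega_z u_{k,z})$.

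For the linearised Gauss equation \eqref{eq:cmc:ps:uk-lin-gauss}, the quickest route is to differentiate in $z$ the identity $u_{k+1,\bar z}=-\bar Q\sigma_{k+1}-\tfrac12 He^\omega\tau_k$ coming from the $(1,1)$-entry of the $d\bar z$-recursion at index $k+1$. Substituting the already-derived $\sigma_{k+1,z}=2Qe^{-\omega}u_{k+1}$ and $Hu_{k+1}=\tau_{k,z}+\omega_z\tau_k$ turns the right-hand side into a multiple of $u_{k+1}$, and the Gauss equation \eqref{eq:cmc:gauss} is then used to reshape the coefficient into the claimed form. The base case $u_{-1}=0$ is trivial. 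The only real obstacle throughout is sign- and prefactor-bookkeeping inside the $2\times 2$ commutators; once $A_0,A_{-1},B_0,B_1$ are tabulated explicitly, the entire iteration is mechanical.
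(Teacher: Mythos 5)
Your proposal is correct and follows essentially the same route as the paper's proof: the same splitting \,$\alpha_\lambda = (U_{-1}\lambda^{-1}+U_0)\,\diff z + (V_0+V_1\lambda)\,\diff\bar z$\,, the same two recursions obtained by matching powers of \,$\lambda$\, in \eqref{eq:cmc:xi-dgl}, the same entrywise extraction of the scalar identities, and the same compatibility argument (differentiating the \,$(1,1)$-entries of the recursions and eliminating \,$\sigma$\, via \,$\sigma_{k,z}=2Qe^{-\omega}u_k$\,) to obtain the formula for \,$\tau_{k,z}$\, and the linearised Gauss equation. The only cosmetic differences are that you derive \,$u_{-1}=\sigma_{-1}=0$\, from the \,$\lambda^{-2}$-coefficient (the centraliser of the nilpotent \,$U_{-1}$\,) rather than quoting the already-established form of \,$\xi_{-1}$\,, and that the coefficient in \eqref{eq:cmc:ps:uk-lin-gauss} in fact drops out directly from the substitution without a further appeal to the Gauss equation.
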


\begin{proof}
  We write
  $$ \alpha_\lambda = (U_{-1}\,\lambda^{-1} + U_0)\mathrm{d}z + (V_0 + V_1\,\lambda)\mathrm{d}\bar{z} $$
  with
  \begin{align*}
    U_{-1} & = \frac12 \begin{pmatrix} 0 & He^{\omega/2} \\ 0 & 0 \end{pmatrix}\;, & 
    U_0 & = \frac14 \begin{pmatrix} \omega_{z} & 0 \\ -4Qe^{-\omega/2} & -\omega_z \end{pmatrix}\;, \\
    V_0 & = \frac14 \begin{pmatrix} -\omega_{\bar{z}} & 4\bar{Q}e^{-\omega/2} \\ 0 & \omega_{\bar{z}} \end{pmatrix} \quad\text{and} & 
    V_1 & = \frac12 \begin{pmatrix} 0 & 0 \\ -H e^{\omega/2} & 0 \end{pmatrix} \; .
  \end{align*}
  
We separate the differential equation \eqref{eq:cmc:xi-dgl} into its \,$\mathrm{d}z$-part and its \,$\mathrm{d}\bar{z}$-part,
and also into the individual powers of \,$\lambda$\, that occur.
In this way we obtain the equations
\begin{align*}
  \xi_{k,z} + [U_0,\xi_{k}] + [U_{-1},\xi_{k+1}] & = 0 \\
  \xi_{k,\bar{z}} + [V_1,\xi_{k-1}] + [V_0,\xi_k] & = 0
\end{align*}
for all \,$k\in\{-1,\dotsc,g\}$\,. By evaluating the brackets and separating the entries of the matrices, we obtain the following equations:
\begin{align}
  \label{eq:cmc:ps:ps1-ukz}
  u_{k,z} + Q \tau_k + \tfrac12 He^{\omega}\sigma_{k+1} & = 0 \\
  \label{eq:cmc:ps:ps1-ukbz}  
  u_{k,\bar{z}} + \tfrac12 He^{\omega}\tau_{k-1} + \bar{Q} \sigma_k & = 0 \\
  \label{eq:cmc:ps:ps1-taukz}
  \tau_{k,z} + \omega_z\,\tau_k - Hu_{k+1} & = 0 \\
  \label{eq:cmc:ps:ps1-taukbz}
  e^{\omega/2}\tau_{k,\bar{z}} - 2\bar{Q}e^{-\omega/2}u_k & = 0 \\
  \label{eq:cmc:ps:ps1-sigmakz}
  e^{\omega/2}\sigma_{k,z} - 2Qe^{-\omega/2}u_k & = 0 \\
  \label{eq:cmc:ps:ps1-sigmakbz}
  \sigma_{k,\bar{z}}+\omega_{\bar{z}} \sigma_k - Hu_{k-1} & = 0 \; .
\end{align}
The right-hand equation of \eqref{eq:cmc:ps:ps-tau} follows from Equation~\eqref{eq:cmc:ps:ps1-taukbz}, Equation~\eqref{eq:cmc:ps:ps-u} follows from
Equation~\eqref{eq:cmc:ps:ps1-taukz}, and Equation~\eqref{eq:cmc:ps:ps-sigma} follows from Equation~\eqref{eq:cmc:ps:ps1-ukbz}. 

From Equation~\eqref{eq:cmc:ps:ps1-ukz} for \,$k-1$\, and Equation~\eqref{eq:cmc:ps:ps1-ukbz} for \,$k$\, we obtain
\begin{equation}
  \label{eq:cmc:ps:ps1-sigmak-two}
  -\tfrac{2}{H}e^{-\omega} (u_{k-1,z} + Q \tau_{k-1}) = \sigma_k = -\tfrac{1}{\bar{Q}}(u_{k,\bar{z}} + \tfrac12He^{\omega}\tau_{k-1})
\end{equation}
and therefore
$$ (\tfrac{H}{2\bar{Q}} e^\omega - \tfrac{2Q}{H}e^{-\omega}) \tau_{k-1} - \tfrac{2}{H}e^{-\omega}u_{k-1,z} + \tfrac{1}{\bar{Q}}u_{k,\bar{z}} = 0 \; . $$
By substituting \,$u_{k-1}$\, using Equation~\eqref{eq:cmc:ps:ps1-taukbz} and \,$u_k$\, using Equation~\eqref{eq:cmc:ps:ps1-taukz} we get
$$ (\tfrac{H}{2\bar{Q}} e^\omega - \tfrac{2Q}{H}e^{-\omega}) \tau_{k-1} - \tfrac{1}{H\bar{Q}}e^{-\omega}\bigr( e^{\omega} \tau_{k-1,\bar{z}} \bigr)_z + \tfrac{1}{H\bar{Q}}\bigr( \tau_{k-1,z} + \omega_z\,\tau_{k-1} \bigr)_{\bar{z}} = 0 \; , $$
thus
$$ (\tfrac{1}{HQ}\omega_{z\bar{z}} + \tfrac{H}{2\bar{Q}} e^\omega - \tfrac{2Q}{H}e^{-\omega}) \tau_{k-1} = 0 $$
and hence again the Maurer-Cartan equation~\eqref{eq:cmc:gauss}. 

By differentiating the right hand half of \eqref{eq:cmc:ps:ps1-sigmak-two} by \,$z$\, and applying Equation~\eqref{eq:cmc:ps:ps1-taukz}  we moreover obtain
$$ \sigma_{k,z} = -\tfrac{1}{\bar{Q}}\bigr(u_{k,z\bar{z}} + \tfrac12 H e^{\omega}(\tau_{k-1,z}+\omega_{z}\,\tau_{k-1})\bigr) = -\tfrac{1}{\bar{Q}}u_{k,z\bar{z}}-\tfrac{H^2}{2\bar{Q}}e^{\omega}u_k \; . $$
On the other hand we have \,$\sigma_{k,z}=2Qe^{-\omega} u_k$\, by Equation~\eqref{eq:cmc:ps:ps1-sigmakz}. By equating these two presentations of \,$\sigma_{k,z}$\, we obtain
Equation~\eqref{eq:cmc:ps:uk-lin-gauss} for \,$u_k$\,. 

Separately, by differentiating Equation~\eqref{eq:cmc:ps:ps1-ukz} with respect to \,$z$\, we get
\begin{equation}
  \label{eq:cmc:ps:ps-taukz-pre}
  u_{k,zz} + Q \tau_{k,z} + \tfrac12 He^{\omega}(\sigma_{k+1,z}+\omega_{z}\,\sigma_{k+1}) = 0 \; .
\end{equation}
Equation~\eqref{eq:cmc:ps:ps1-ukz} implies
\begin{equation}
  \label{eq:cmc:ps:ps-taukz-pre1}
  \tfrac12He^{\omega}\sigma_{k+1} = -u_{k,z}- Q \tau_k
\end{equation}
and Equations~\eqref{eq:cmc:ps:ps1-sigmakz} and \eqref{eq:cmc:ps:ps1-taukz} imply
\begin{equation}
  \label{eq:cmc:ps:ps-taukz-pre2}
  \sigma_{k+1,z} = 2Q e^{-\omega} u_{k+1} = \tfrac{2Q}{H} e^{-\omega} (\tau_{k,z}+\omega_{z} \,\tau_k) \; .
\end{equation}
By inserting Equations~\eqref{eq:cmc:ps:ps-taukz-pre1} and \eqref{eq:cmc:ps:ps-taukz-pre2} into Equation~\eqref{eq:cmc:ps:ps-taukz-pre}, and solving for \,$\tau_{k,z}$\,,
we finally obtain the equation on the left-hand side of \eqref{eq:cmc:ps:ps-tau}.
\end{proof}

According to Equation~\eqref{eq:cmc:ps:uk-lin-gauss}, the Taylor coefficients of the diagonal entries of the polynomial Killing field are solutions of the
linearisation of the sinh-Gordon equation \eqref{eq:cmc:gauss}. Thus they can be interpreted as infinitesimal deformations in the space of conformal metrics
at the metric given by \,$e^{\omega}\,\diff z \diff \bar{z}$\,. This is the reason why \,$\xi$\, is called a polynomial ``Killing field''.

We now explain how one uses the Pinkall-Sterling iteration to reconstruct the entries of the polynomial Killing field corresponding to a given conformal factor \,$\omega$\,
of a cmc immersion of finite type \,$g$\,. We have the ``initial condition'' \,$\tau_{-1}=\tfrac12 H$\,. The sinh-Gordon equation~\eqref{eq:cmc:gauss}
and its linearisation \eqref{eq:cmc:ps:uk-lin-gauss} imply that in each step, the integrability condition \,$(\tau_{k,z})_{\bar z} = (\tau_{k,\bar{z}})_z$\, for \,$\tau_k$\,
is satisfied, thus the system of partial differential equations \eqref{eq:cmc:ps:ps-tau} has a solution \,$\tau_k$\,. This solution is only unique up to an
additive constant \,$C_k$\,. The condition that \,$\omega$\, has finite type \,$g$\, is equivalent to the property that the \,$C_k$\, can be chosen in such a way
that \,$u_{g}=0$\, holds; then also \,$\tau_g=0$\, holds.

As a corollary to the equations of the Pinkall-Sterling iteration, the lowest entries of the polynomial Killing field
$$ \xi = \frac12 \begin{pmatrix} 0 & H e^{\omega/2} \\ 0 & 0 \end{pmatrix} \lambda^{-1} + \sum_{k=0}^g \begin{pmatrix} u_k & v_k \\ w_k & -u_k \end{pmatrix} \lambda^k $$
can be written down explicitly: We have \,$\tau_{-1} = \tfrac12 H$\,. From Equation~\eqref{eq:cmc:ps:ps-u} we obtain
\,$u_0 = \tfrac{1}{H}(\tau_{-1,z}+\omega_z\tau_{-1}) = \tfrac12 \omega_z$\,, and then we obtain from Equation~\eqref{eq:cmc:ps:ps-sigma} and the
sinh-Gordon equation~\eqref{eq:cmc:gauss} \,$\sigma_0 = -\tfrac{1}{\bar{Q}}(\tfrac12 \omega_{z\bar{z}} + \tfrac14 H^2 e^\omega) = -Qe^{-\omega}$\,
(thereby recovering the right-hand half of \eqref{eq:cmc:xi-vm1w0}). Equations~\eqref{eq:cmc:ps:ps-tau} now give the following system of partial differential
equations for \,$\tau_0$\,
$$ \tau_{0,z} = -\frac{1}{4Q}(\omega_{zzz}-\omega_z\,\omega_{zz}) \qquad \tau_{0,\bar{z}} = \bar{Q}\,e^{-\omega}\,\omega_z \;, $$
which due to the Gauss equation~\eqref{eq:cmc:gauss} is solved by \,$\tau_0 = -\tfrac{1}{4Q}(\omega_{zz} - \tfrac12 \omega_z^2) + C_0$\, with a constant \,$C_0$\,.
Repeating one more step of the iteration gives
\begin{align*}
u_1 & = -\tfrac{1}{4HQ}\bigr(\omega_{zzz}-\tfrac12 \omega_z^3 - 4QC_0\omega_z \bigr) \\
\sigma_1 & = -\tfrac{1}{4H}\bigr(\omega_z^2+2\omega_{zz} + 8QC_0 \bigr)e^{-\omega} \\
\tau_1 & = \tfrac{1}{8HQ}\bigr(\omega_{zzzz}-\omega_{zzz}\omega_z + \tfrac12 \omega_{zz}^2 - \tfrac32 \omega_{zz}\omega_z^2 + \tfrac38 \omega_z^4 - 4C_0(\omega_{zz}-\tfrac12 \omega_z^2)\bigr) + C_1 \; . 
\end{align*}
From these calculations and the reality condition \eqref{eq:cmc:xi-reality} we see that 
that the polynomial Killing field \,$\xi$\, corresponding to the conformal factor \,$\omega$\, has the form
\begin{align}
  \label{eq:cmc:xi-terms}
  \xi & = \begin{pmatrix} 0 & \tfrac12 H e^{\omega/2} \\ 0 & 0 \end{pmatrix} \lambda^{-1}
  + \begin{pmatrix} \tfrac12 \omega_z & -e^{\omega/2}(\tfrac{1}{4Q}(\omega_{zz}-\tfrac12 \omega_z^2) - C_0) \\ -Qe^{-\omega/2} & -\tfrac12 \omega_z \end{pmatrix}
  + \dotsc \\
  \notag
  & \qquad \dotsc
  + \begin{pmatrix} -\tfrac12 \omega_{\bar{z}} & \bar{Q} e^{-\omega/2} \\ e^{\omega/2}(\tfrac{1}{4\bar{Q}}(\omega_{\bar{z}\bar{z}}-\tfrac12 \omega_{\bar{z}}^2) - \bar{C}_0) & \tfrac12 \omega_{\bar{z}} \end{pmatrix} \lambda^{g-1}
  + \begin{pmatrix} 0 & 0 \\ -\tfrac12 H e^{\omega/2} & 0 \end{pmatrix} \lambda^g \; .
\end{align}


In the Pinkall-Sterling iteration, specific choices of integration constants for $\tau_k $ yield the polynomial Killing field $\xi $ of the conformal factor $\omega $. We may more broadly consider all solutions to \eqref{eq:cmc:xi-dgl} of the form
\[
\xi = \sum_{k=-1}^{d} \xi_k\,\lambda^k \quad\text{with}\quad \xi_k = \begin{pmatrix} u_k & \tau_k\,e^{\omega/2} \\ \sigma_k\,e^{\omega/2} & -u_k \end{pmatrix} \;, 
\]
where the degree \,$d$\, is arbitrary and finite, \,$u_k, \tau_k, \sigma_k$\, are smooth, complex-valued functions of\,$z$ and \,$\tau_{-1} = \tfrac12 H$\,, \,$u_{-1} = \sigma_{-1} = 0$\,. Each such expression
is termed  \emph { a polynomial Killing field of $\omega$}. Not all choices of integration constants in the Pinkall-Sterling iteration process will yield solutions of finite degree but we restrict our attention to those which do.

\begin {lemma}\label{lemma:Killing}
(Cf.~\cite[Proposition~4.5]{HKS1})
 Assume that we are given as above a solution $ (\omega, H, Q) $ of the sinh-Gordon equation \eqref{eq:cmc:gauss} which is at least simply-periodic and of finite type. \begin {enumerate}
\item 
There is a unique polynomial Killing field $\xi ^ {\min} (\lambda,z) $ of minimal degree in $\lambda $. 
\item This minimal polynomial Killing field $ \xi ^ {\min} $ has no roots in $\lambda\in\C^\times $.
\item Each other polynomial Killing field can be expressed as $  p (\lambda)\xi ^ {\min} (\lambda, z) $ for some monic polynomial $ p \in \mathbb{C}[\lambda] $. 
\end {enumerate}
 \end {lemma}

\begin {proof} 
\begin {enumerate}
\item There exists a polynomial Killing field of finite degree, and therefore there exists one of minimal degree \,$d^{\min}$\,. It remains to show that there is only one polynomial Killing field of this degree \,$d^{\min}$\,. Suppose that $\xi ^ i $, $ i = 1, 2 $ are both polynomial Killing fields of $\omega $
$$ \xi ^ i = \sum_{k=-1}^{d^{\min}} \xi ^ i_k\,\lambda^k \quad\text{with}\quad \xi_k = \begin{pmatrix} u ^ i_k & \tau ^ i_k\,e^{\omega/2} \\ \sigma ^ i_k\,e^{\omega/2} & -u ^ i_k \end{pmatrix} \;. $$ 
By definition,
\[
\xi ^ 1_{-1} = \xi ^ 2_{-1} = \begin{pmatrix} 0 & \tfrac12 H e^{\omega/2} \\ 0 & 0 \end{pmatrix}.
\]
Consider the difference $ \xi ^D =\xi ^ 1 -\xi ^ 2 $. If it is nonzero then there is a smallest $l $,\, $ 0\leq l\leq d^{\min} $ for which $\xi ^D_l\neq 0 $. From Equations \eqref {eq:cmc:ps:ps-tau}, \eqref {eq:cmc:ps:ps-u} and  \eqref {eq:cmc:ps:ps-sigma}  we see that $ u_l =\sigma_l = 0 $ whilst $\tau_l $ is a nonzero constant. Then \,$\lambda^{-l-1}\tfrac{H}{2 \tau_l} \xi^D$\, solves the differential equation~\eqref{eq:cmc:xi-dgl} by linearity and has the correct lowest term to be a polynomial Killing field for $\omega $. It has degree strictly lower than \,$d^{\min}$\,. 
Hence the difference $\xi^D$ must vanish identically.
\item If $\xi ^ {\min}  $ has a root $\lambda_0 $ of some order $k \geq 1 $, then $\xi ^ 0: = \frac {(-\lambda_0) ^ k} {(\lambda -\lambda_0) ^ {k}}\xi ^ {\min} $ is a Laurent polynomial in $\lambda $ of degree \,$d^{\min}-k$\,, satisfies  \eqref{eq:cmc:xi-dgl} and has the same leading coefficient as $\xi ^ {\min} $. This contradicts (1), and hence \,$\xi^{\min}$\, has no roots.
\item Let $\tilde {\xi} =\sum_{k = -1} ^l \tilde {\xi}_k \lambda ^ k $ be a polynomial Killing field and $\xi ^ {\min} =\sum_{k = -1} ^{d^{\min}} \xi ^ {\min}_k\lambda ^ k $ the polynomial Killing field of minimal degree, where
\[
\tilde\xi_k  = \begin{pmatrix}  \tilde u_k  &  \tilde \tau_k \,e^{\omega/2} \\  \tilde \sigma_k\,e^{\omega/2} & - \tilde u_k\end{pmatrix}\;,\quad 
\xi ^{\min}_k = \begin{pmatrix} u ^{\min}_k & \tau ^{\min}_k \,e^{\omega/2} \\ \sigma ^{\min}_k \,e^{\omega/2} & -u ^{\min}_k\end{pmatrix}\;.
\]
 We show that it is possible to solve 
\[
\tilde {\xi} (\lambda, z) = p (\lambda)\xi ^ {\min} (\lambda, z) 
\]
with a monic polynomial \,$p \in \mathbb{C}[\lambda]$\,. All polynomial Killing fields obey \eqref {eq:cmc:xi-dgl} and hence fulfil Proposition~\ref{P:cmc:ps}. Each coefficient $ p_{k +1} $ of $p(\lambda)$ is iteratively determined by
\[
\tilde\tau_k = p_0\tau ^ {\min}_k + p_1\tau ^ {\min}_{k -1} +\cdots + p_{k +1}\tau ^ {\min}_{-1},
\]
giving in particular $ p_0 = 1 $. Since Equation~\eqref{eq:cmc:ps:ps-u} shows that \,$u_{k+1}$\, is uniquely determined by \,$\tau_k$\, and its derivative, we have 
\[
 \tilde u_{k +1} =  u ^ {\min}_{k +1} + p_1 u ^ {\min}_{k } + \cdots + p_{k +1} u ^ {\min}_0 \; . 
\]
Similarly, Equation~\eqref{eq:cmc:ps:ps-sigma} shows that \,$\sigma_{k+1}$\, is uniquely determined by \,$\tau_k$\, and \,$u_{k+1}$\,, which yields the analogous equation for $\sigma_{k+1}$. 
\end{enumerate}
\end{proof}

\begin {proposition}\label {proposition:spectral}
 Assume that we are given as above a solution $ (\omega, H, Q) $ of the sinh-Gordon equation \eqref{eq:cmc:gauss} which is at least simply-periodic and of finite type. 
\begin {enumerate}
\item  The eigenline curve $\Sigma $ of the monodromy agrees with the eigenline curve of any polynomial Killing field \,$\xi$\, of \,$\omega$\,. It agrees with the characteristic polynomial curve $\nu ^ 2 = -\det\left(\lambda\xi \right) $ of \,$\lambda\xi$\, if and only if \,$\xi=\xi^{\min}$\,.  
\item The polynomial Killing field defined by \eqref {eq:cmc:xi-eigenfunction} is the unique polynomial Killing field $\xi ^ {\min}$ of minimal degree. 
\end {enumerate}
\end {proposition}

\begin {proof}
\begin {enumerate}
\item The curve $\Sigma $ is defined as the eigenline curve of the monodromy $ M_{z_0} (\lambda) $ of the frame $ F_\lambda (z_0) $,  and  \eqref {eq:cmc:monodromy-basepoint} shows that $\Sigma $ is independent of the base point $z_0 $. We may alternatively view $ M_{z_0} $ as the monodromy of the family of flat connections $ d +\alpha_\lambda $ with respect to the base point $ z_0 $, where $\alpha_\lambda = F_\lambda^ {- 1}d F_\lambda $. A polynomial Killing field $\xi $ is a section of the endomorphism bundle of the trivial \,$\mathbb{C}^2$-bundle over \,$\Sigma$\,. By \eqref {eq:cmc:xi-dgl}, it is parallel with respect to $ d +\alpha_\lambda $. The monodromy action on the endomorphism bundle is by conjugation:
\[
 \xi (\lambda, z + T) = M_z ^ {- 1}(\lambda)\xi (\lambda, z) M_z (\lambda)\;,
\]
where \,$T\in \mathbb{C}^\times$\, is the minimal period of the (at least) simply-periodic solution \,$\omega$\,. 
The Pinkall-Sterling iteration preserves the period \,$T$\, of the initial data \,$\omega$\,, and therefore the polynomial Killing field $\xi$ is also periodic with period $T$. Hence any polynomial Killing field $\xi $ commutes with the monodromy and their eigenline curves agree.

Since the trace of $\lambda\xi$ vanishes, its characteristic polynomial takes the form $\nu ^ 2 = -\det\left(\lambda\xi (\lambda)\right) $. We now consider \,$\xi=\xi^{\min}$\, and 
take $\lambda_0\in\mathbb C ^\times $. If $\xi ^ {\min} (\lambda_0) $ has two distinct eigenvalues,  then it has distinct eigenlines. Otherwise $\xi ^ {\min} (\lambda_0) $ has up to multiplicity only one eigenvalue, which must be $ 0 $ because the trace vanishes. By Lemma~\ref{lemma:Killing}, $\xi ^ {\min} $ has no roots in $\lambda $, so the $ 0 $-eigenspace is one-dimensional. Writing
 \[
\xi ^{\min}  = \begin{pmatrix} u ^{\min}  & \tau ^{\min}\,e^{\omega/2} \\ \sigma ^{\min}\,e^{\omega/2} & -u ^{\min}\end{pmatrix} \;, 
\]
one of the off-diagonal entries is non-zero at $\lambda_0 $. If $\tau ^{\min} (\lambda_0)\neq 0 $, then in a neighbourhood of $\lambda_0 $  
the eigenlines of $\xi ^{\min} (\lambda) $
are spanned by
\[
\begin {pmatrix}\tau^ {\min}  e ^ {\omega/2}\\
\pm \left (-\det\left (\xi ^{\min} \right)\right) ^ \frac 12 - u ^{\min} 
\end {pmatrix}.
\]
Recall that the eigenline curve is given by $\nu ^ 2 =\lambda a (\lambda) $, where the polynomial \,$a(\lambda)$\, vanishes at \,$\lambda_0$\, to the order to which the eigenlines at \,$\lambda_0$\, agree. By the above formula, this is equal to the order of vanishing of $-\det\left (\xi ^{\min} \right) $, or equivalently of $-\det\left (\lambda \xi ^{\min} \right) $ at \,$\lambda_0 \in \bbC^{\times}$\,. The case $\sigma ^ {\min} (\lambda_0)\neq 0 $ is analogous.

To complete the proof of \,$-\det\left(\lambda\xi ^{\min} (\lambda)\right) = \lambda a (\lambda)$\,, we consider \,$\lambda=0$\,. 
 From  \eqref{eq:cmc:xi-terms},
 \[
-\det\left(\lambda\xi ^{\min} (\lambda)\right) = -\frac 12 HQ\lambda + O (\lambda^2). 
\]
Therefore the polynomials $-\det\left(\lambda\xi ^{\min} (\lambda)\right) $ and $\lambda a (\lambda) $ both have a simple root at \,$\lambda=0$\, and the same lowest-order term.
It follows that these polynomials coincide.

On the other hand, if \,$\xi$\, is a polynomial Killing field that is not the minimal one, then \,$-\det(\lambda \xi(\lambda))$\, has additional roots by Lemma~\ref{lemma:Killing}(3) and therefore cannot be equal to \,$\lambda a(\lambda)$\,. 

\item 
The polynomial Killing field $\xi (\lambda) $ of  \eqref {eq:cmc:xi-eigenfunction} was defined so that the eigenvalues of $\lambda\xi (\lambda) $ are given by the function $\nu $ of \eqref {eq:cmc:Sigma} and so that the characteristic polynomial curve of $\lambda\xi (\lambda) $ coincides with the eigenline curve $\Sigma $.  By (1), we have \,$\xi = \xi^{\min}$\,.
\end {enumerate}
\end {proof}

To sum up this discussion, we consider the \emph{space of cmc potentials} of spectral genus \,$g$\,
\begin{equation}
  \label{eq:cmc:potentials}
  \mathcal{P}_g = \left\{ \left. \zeta_\lambda = \sum_{k=-1}^g \zeta_k\lambda^k \right| \zeta_k = -\overline{\zeta_{g-(k+1)}}^t \in \mathfrak{sl}(2,\C), \; \zeta_{-1} \in \left( \begin{smallmatrix} 0 & \R^+ \\ 0 & 0 \end{smallmatrix} \right), \mathrm{tr}(\zeta_{-1}\,\zeta_0) \neq 0 \right\} \; .
\end{equation}
Then the polynomial Killing field \,$\xi = \xi_z$\, corresponding to the conformal factor \,$\omega$\, of a cmc immersion,
seen as a function depending on the base point \,$z$\,, has values in \,$\mathcal{P}_g$\, 
and is a solution of the differential equation \eqref{eq:cmc:xi-dgl}.
On \,$\mathcal{P}_g$\, we moreover consider the following 1-form-valued linear map (cf.~\cite[Equation~(3.3)]{HKS1}), which associates to each potential the corresponding cmc connection form:
\begin{equation}
  \label{eq:cmc:alpha-pkf}
  \mathcal{P}_g \to \Omega^1(\C) \otimes \mathfrak{sl}(2,\C), \; \zeta \mapsto \alpha_\lambda(\zeta) := \begin{pmatrix} \tfrac12 u_0 & v_{-1}\,\lambda^{-1} \\ w_0 & -\tfrac12 u_0 \end{pmatrix} \diff z
  - \begin{pmatrix} \tfrac12 \bar{u}_0 & \bar{w}_0 \\ \bar{v}_{-1}\,\lambda & -\tfrac12 \bar{u}_0 \end{pmatrix} \diff \bar{z} \; .
\end{equation}
By comparison of Equation~\eqref{eq:cmc:xi-terms} with Equation~\eqref{eq:cmc:alpha} we see that we have \,$\alpha_\lambda(\zeta) = \alpha_\lambda$\,,
where \,$\alpha_\lambda(\zeta)$\, is defined by Equation~\eqref{eq:cmc:alpha-pkf} and \,$\alpha_\lambda$\, is defined by Equation~\eqref{eq:cmc:alpha} with respect
to the conformal factor \,$\omega$\, to which the polynomial Killing field \,$\xi$\, conforms. In particular, all cmc polynomial Killing fields \,$\xi$\, of spectral genus \,$g$\,
are solutions of the differential equation
\begin{equation}
  \label{eq:cmc:xi-dgl-2}
  \diff \xi + [\alpha_\lambda(\xi),\xi] = 0 \quad\text{on \,$\mathcal{P}_g$\,,}
\end{equation}
and from such a solution, say \,$\xi = \left( \begin{smallmatrix} 0 & v_{-1} \\ 0 & 0 \end{smallmatrix} \right)\lambda^{-1} + \dotsc$\,,
the corresponding conformal factor \,$\omega$\, can be recovered by means of the equation \,$\tfrac12 H e^{\omega/2} = v_{-1}$\,. 
In this sense the definition of the space of potentials \eqref{eq:cmc:potentials} together with the linear map \eqref{eq:cmc:alpha-pkf} characterises the integrable system
of the sinh-Gordon equation on the level of polynomial Killing fields.

{\bf Symes' method.}
The polynomial Killing field encodes all information of the spectral curve \,$\Sigma$\, and the eigenline bundle \,$\Lambda_{z_0}$\, via the the characterisation that
\,$\Sigma$\, is the complex curve defined by the characteristic equation of \,$\lambda\,\xi_{z_0}$\, and \,$\Lambda_{z_0}$\, is the eigenline bundle of \,$\lambda\,\xi_{z_0}$\,
seen as a holomorphic line bundle on \,$\Sigma$\,. 
Therefore it is expected that the extended frame \,$F_\lambda$\, and hence (via the Sym-Bobenko formula~\eqref{eq:cmc:sym-bobenko}) the cmc immersion \,$f_\lambda$\,
can be recovered from the polynomial Killing field \,$\xi_{z_0}$\, (at a fixed, arbitrarily chosen base point \,$z_0$\,). The process by which this is accomplished
is known as \emph{Symes' method} \cite{symes}. We give a description of this method below, which closely follows \cite[Section~3]{HKS1}.
For this purpose we need the Iwasawa decomposition of the loop group \,$\Lambda \mathrm{SL}(2,\C)$\, in the form introduced by Pressley/Segal \cite{PreS} and then generalised by McIntosh \cite{McI}: 

\begin{proposition}
  \label{P:cmc:iwasawa}
  For \,$0<r<\infty$\, we denote the circle \,$S^1_r = \partial B(0,r)= \{\lambda \in \C\mid |z|=r\}$\, and the annulus \,$A_r = \{\lambda \in \C\mid \min(r,r^{-1})<|\lambda|<\max(r,r^{-1})\}$\,. We consider the loop groups
\begin{align*}
  \Lambda_r \mathrm{SL}(2,\C) & = \{ \Phi: S^1_r \to \mathrm{SL}(2,\C) \text{ analytic }  \} \\
  \Lambda_r \mathrm{SU}(2) & = \{ F \in \Lambda_r \mathrm{SL}(2,\C) \mid \text{\,$F$\, extends analytically to \,$A_r$\, and \,$F[S^1_r] \subset \mathrm{SU}(2)$\,} \} \\
  \Lambda_r^+ \mathrm{SL}(2,\C) & = \{ \Phi \in \Lambda_r \mathrm{SL}(2,\C) \mid \text{\,$\Phi$\, extends analytically to \,$\overline{B(0,r)}$\,}\} \\
  \Lambda_r^{+,\R} \mathrm{SL}(2,\C) & = \{ \Phi \in \Lambda_r^+ \mathrm{SL}(2,\C) \mid \text{\,$\Phi|_{\lambda=0} = \left( \begin{smallmatrix} \rho & c \\ 0 & \rho^{-1} \end{smallmatrix} \right) $\, with some \,$\rho>0$\, and \,$c\in\C$\,}\}  
\end{align*}
In the case \,$r=1$\, we omit the subscript \,${}_r$\, from the loop groups.

  The map
  $$ \textsf{Iwasawa}_r: \Lambda_r \mathrm{SU}(2) \times \Lambda_r^{+,\R} \mathrm{SL}(2,\C) \to \Lambda_r \mathrm{SL}(2,\C),\; (F,B) \mapsto F\cdot B $$
  is a real analytic diffeomorphism onto \,$\Lambda_r \mathrm{SL}(2,\C)$\,. For given \,$\Phi \in \Lambda_r \mathrm{SL}(2,\C)$\,, \,$(F,B) = \textsf{Iwasawa}_r^{-1}(\Phi)$\,
  is called the \emph{\,$r$-Iwasawa decomposition} of \,$\Phi$\,. 
\end{proposition}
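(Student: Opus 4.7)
The plan is to prove this standard loop-group Iwasawa factorization (in the style of Pressley--Segal) by separately establishing injectivity, surjectivity, and real-analyticity of the bijection. The main technical step will be surjectivity; the other two reduce to a Liouville-type argument and the inverse-function theorem, respectively.

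For injectivity, suppose $F_1 B_1 = F_2 B_2$ and set $g := F_2^{-1} F_1 = B_2 B_1^{-1}$. The right-hand expression shows $g$ extends holomorphically to $\overline{B(0,r)}$, while the left-hand expression shows it extends analytically to $A_r$ with values in $\mathrm{SU}(2)$ on $S^1_r$. The Schwarz reflection across $S^1_r$, namely $\tilde g(\lambda) := \overline{g(r^2/\bar\lambda)}^{t,-1}$, agrees with $g$ on $S^1_r$ (because elements of $\mathrm{SU}(2)$ satisfy $M^{*,-1} = M$) and extends $g$ holomorphically to $\mathbb{P}^1 \setminus B(0,r)$. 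Patching yields a holomorphic map $\mathbb{P}^1 \to \mathrm{SL}(2,\C)$, which is constant by Liouville. The combined constraints (unitary on $S^1_r$ and upper triangular with positive diagonal at $\lambda=0$) force this constant to be $\mathbf{1}$, so $F_1=F_2$ and $B_1=B_2$.

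For surjectivity, given $\Phi \in \Lambda_r\mathrm{SL}(2,\C)$, I seek $B \in \Lambda_r^{+,\R}\mathrm{SL}(2,\C)$ with $F := \Phi B^{-1}$ taking values in $\mathrm{SU}(2)$ on $S^1_r$. This is equivalent to the Riemann--Hilbert factorization $\Phi^{*}\Phi = B^{*}B$ of a positive-definite Hermitian loop. One can construct $B$ concretely by loop-theoretic Gram--Schmidt orthogonalization of the columns of $\Phi$ inside the Hardy-type space $H^2(\overline{B(0,r)},\C^2)$: the orthogonalization produces vectors extending holomorphically to $\overline{B(0,r)}$, and the standard Gram--Schmidt normalization (positive real leading coefficient for the first column, strict-upper-triangular off-diagonal entry for the second) matches the definition of $\Lambda_r^{+,\R}\mathrm{SL}(2,\C)$ at $\lambda=0$. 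The equivalent Pressley--Segal formulation embeds $\Lambda_r\mathrm{SL}(2,\C)$ into the restricted Grassmannian of $L^2(S^1_r,\C^2)$ and realizes $B$ as the stabilizer of the positive polarization in the orbit under $\Lambda_r\mathrm{SU}(2)$; the global obstruction to the existence of $B$ is vanishing of the Fredholm index of the associated Toeplitz operator, which holds because $\Lambda_r\mathrm{SL}(2,\C)$ is connected and the index is zero at $\Phi = \mathbf{1}$.

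Real-analyticity is then a routine conclusion. Multiplication $(F,B)\mapsto FB$ is obviously real-analytic between the relevant Banach manifolds, and its derivative at any point is modelled on the Lie-algebra splitting $\Lambda_r\mathfrak{sl}(2,\C) = \Lambda_r\mathfrak{su}(2) \oplus \Lambda_r^{+,\R}\mathfrak{sl}(2,\C)$; this is a topological direct sum by a Fourier / Hilbert-transform calculation on $S^1_r$, with the upper-triangular-with-real-positive-diagonal constraint at $\lambda = 0$ pinning down the remaining finite-dimensional ambiguity carried by the constant Fourier term. The inverse-function theorem then upgrades the continuous bijection to a real-analytic diffeomorphism. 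The main obstacle throughout is surjectivity, where the linear Gram--Schmidt construction must be combined with a global Toeplitz-index / connectedness argument to rule out any topological obstruction to the factorization.
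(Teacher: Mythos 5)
The paper itself offers no proof of this proposition: it is quoted as the standard loop-group Iwasawa decomposition in the style of Pressley--Segal, so your argument can only be judged on its own merits. Your overall strategy --- injectivity by Schwarz reflection across \,$S^1_r$\, plus Liouville, surjectivity via the positive-definite Riemann--Hilbert factorization \,$\Phi^*\Phi = B^*B$\, (Gram--Schmidt in the Hardy space, equivalently the Grassmannian/Toeplitz picture), and real-analyticity via the inverse function theorem applied to the Lie-algebra splitting --- is exactly the standard proof, and the injectivity and analyticity parts are essentially complete. One small repair on surjectivity: vanishing of the Fredholm index of the Toeplitz operator does not by itself give invertibility; you also need injectivity, which here follows from positive-definiteness of the symbol, since for nonzero \,$f\in H^2$\, one has \,$\langle T_{\Phi^*\Phi}f,f\rangle = \|\Phi f\|_{L^2}^2>0$\, because \,$\Phi$\, is pointwise invertible.

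The genuine gap is the annulus condition in the definition of \,$\Lambda_r\mathrm{SU}(2)$\,. Your factorization produces \,$F=\Phi B^{-1}$\, that is unitary on \,$S^1_r$\, and analytic a priori only on a thin annulus around \,$S^1_r$\, together with its image under the reflection \,$\lambda\mapsto r^2/\bar\lambda$\,; nothing in the construction makes \,$F$\, extend analytically to all of \,$A_r$\,, and by the uniqueness you proved there is no other candidate. For a general analytic loop on \,$S^1_r$\, the extension can in fact fail: take \,$\Phi$\, already \,$\mathrm{SU}(2)$-valued on \,$S^1_r$\, with a natural boundary just outside that circle; then \,$B=\one$\, and \,$F=\Phi$\, does not extend to \,$A_r$\,. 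So either the unitarity must be imposed on the unit circle \,$S^1$\, (consistently with the reality condition \eqref{eq:cmc:alpha-F-reality}) and the factorization set up as a Riemann--Hilbert problem relative to both boundary circles of \,$A_r$\,, which is how the \,$r$-Iwasawa decomposition is usually formulated in the literature the paper draws on, or one restricts attention to the loops actually used in Proposition~\ref{P:cmc:symes}, namely \,$\Phi=\exp\bigl((z-z_0)\zeta\bigr)$\,, which are holomorphic on all of \,$\C^*$\,. In the latter case \,$F=\Phi B^{-1}$\, is holomorphic on \,$0<|\lambda|\leq r$\, and the reflection \,$F(\lambda)=\overline{F(r^2/\bar\lambda)}^{t,-1}$\, extends it to all of \,$\C^*$\,, so your construction does suffice for every application the paper makes of this proposition; but you should say which version you are proving and supply that extension step explicitly.
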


\begin{proposition}
  \label{P:cmc:symes}
  Let a sinh-Gordon potential \,$\zeta \in \mathcal{P}_g$\, be given. We fix a base point \,$z_0$\, and let \,$\Phi_\lambda(z) = F_\lambda(z)\cdot B_\lambda(z)$\,
  be the Iwasawa decomposition (Proposition~\ref{P:cmc:iwasawa}) of \,$\Phi_\lambda(z) = \exp\bigr( (z-z_0)\zeta \bigr)$\,.
  In this situation \,$F_\lambda(z)$\, extends holomorphically to \,$\C^{\times}  \ni \lambda$\,, we have \,$\diff F_\lambda = F_\lambda\,\alpha_\lambda$\,,
  and for \,$\lambda \in S^1$\,, the Sym-Bobenko formula
  \eqref{eq:cmc:sym-bobenko} with this \,$F_\lambda$\, gives a conformal cmc immersion \,$f$\, into \,$\R^3$\, with conformal metric \,$e^\omega\,\diff z \,\diff \bar{z}$\,.
  Here \,$\omega$\, is the real-valued function characterised by \,$\xi = \left( \begin{smallmatrix} 0 & \tfrac12 H e^{\omega/2} \\ 0 & 0 \end{smallmatrix} \right)\lambda^{-1} + \dotsc$\,
  for the unique solution \,$\xi: \C \to \mathcal{P}_g$\, of Equation~\eqref{eq:cmc:xi-dgl-2} with \,$\xi_{z_0}=\zeta$\,, and \,$\alpha_\lambda$\, is defined
  by Equation~\eqref{eq:cmc:alpha} with respect to this \,$\omega$\,. 
\end{proposition}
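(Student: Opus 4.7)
The plan is to extend $F$ holomorphically to $\C^*$, identify $\alpha := F^{-1}\diff F$ with the cmc connection form \eqref{eq:cmc:alpha} for suitable data $(\omega, H, Q)$, and invoke Proposition~\ref{P:cmc:sym-bobenko}. Since $B \in \Lambda_1^{+,\R}\mathrm{SL}(2,\C)$ extends holomorphically and invertibly to $\overline{B(0,1)}$, and $\Phi = \exp((z-z_0)\zeta)$ is holomorphic on $\lambda \in \C^*$, the identity $F = \Phi B^{-1}$ furnishes a holomorphic extension of $F$ to $\overline{B(0,1)} \setminus \{0\}$. The $\mathrm{SU}(2)$-condition for $F$ on $S^1$ translates into $F(\bar\lambda^{-1}) = \overline{F(\lambda)}^{t,-1}$, and Schwarz reflection then extends $F$ holomorphically to $|\lambda| \ge 1$, hence to all of $\C^*$.

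Next I introduce the Lax variable $\xi(z,\lambda) := F^{-1}\zeta F$. Because $\Phi$ commutes with $\zeta$, one also has $\xi = B\zeta B^{-1}$, which shows $\xi$ has no Laurent powers below $\lambda^{-1}$. The reality of $F$ propagates the $\mathcal{P}_g$-condition from $\zeta$ to $\xi$, so $\xi(z,\cdot) \in \mathcal{P}_g$ for every $z$, with $\xi(z_0) = \zeta$. A direct differentiation yields the Lax equation $\diff\xi + [\alpha,\xi] = 0$ for $\alpha := F^{-1}\diff F$.

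To identify $\alpha$ with the map $\alpha_\lambda(\xi)$ of \eqref{eq:cmc:alpha-pkf}, combine $\Phi^{-1}\diff\Phi = \zeta\,\diff z$ with $\Phi = FB$ to obtain
\[
\alpha = \xi\,\diff z - \diff B\cdot B^{-1}.
\]
Since $\diff B\cdot B^{-1}$ has only non-negative $\lambda$-powers, the negative $\lambda$-part of $\alpha$ in $\diff z$ is exactly $\xi_{-1}\lambda^{-1}$; the reality condition \eqref{eq:cmc:alpha-F-reality} then restricts $\alpha$ further to $\lambda^{-1},\lambda^0$ in $\diff z$ and $\lambda^0,\lambda^1$ in $\diff\bar z$. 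Writing $B(z,0) = \bigl(\begin{smallmatrix} \rho & c \\ 0 & \rho^{-1}\end{smallmatrix}\bigr)$ with $\rho(z) \in \R^+$, upper-triangularity of $B(z,0)$ forces $\partial_{\bar z}B(0)\cdot B(0)^{-1}$ to be upper triangular, so the $(2,1)$-entry of $\alpha_{\bar z}|_{\lambda^0}$ vanishes, and by reality so does the $(1,2)$-entry of $\alpha_z|_{\lambda^0}$. Defining $H := 2\,(\zeta_{-1})_{12} \in \R^+$ and the real-valued function $\omega$ by $v_{-1} = \tfrac12 H\,e^{\omega/2}$, normalised so that $\omega(z_0)=0$, gives $\rho = e^{\omega/4}$; a direct comparison of diagonals (using $\rho^{-1}\partial_z\rho = \tfrac14\omega_z$) together with the vanishing off-diagonal entries identifies $\alpha$ entry-wise with $\alpha_\lambda(\xi)$.

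Setting $Q := -w_0\,e^{\omega/2}$, projecting the Lax equation onto the $(2,1)$-entry of the $\lambda^0$-coefficient in the $\diff z$ direction yields $\partial_z w_0 = -\tfrac12\omega_z w_0$, and in the $\diff\bar z$ direction $\partial_{\bar z} w_0 = -\tfrac12\omega_{\bar z} w_0$, so that $w_0\,e^{\omega/2}$ is constant and $Q \in \C^*$. Hence $\alpha$ coincides with \eqref{eq:cmc:alpha} for the data $(\omega,H,Q)$; the Maurer-Cartan equation, automatic from $\alpha = F^{-1}\diff F$, is equivalent to the Gauss equation \eqref{eq:cmc:gauss}, and Proposition~\ref{P:cmc:sym-bobenko} delivers the desired conformal cmc immersion. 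The main technical hurdle is the entry-wise identification $\alpha = \alpha_\lambda(\xi)$, resting on the interplay between the upper-triangular normalisation built into $\Lambda^{+,\R}$ and the reality condition.
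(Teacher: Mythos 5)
Your proof is correct and follows essentially the same route as the paper's: the gauge-transformation identity $F^{-1}\,\diff F=\xi\,\diff z-\diff B\cdot B^{-1}$, power counting in $\lambda$ via the holomorphy of $B$ at $\lambda=0$ combined with the reality condition, and the entry-wise comparison using the upper-triangular, real-diagonal normalisation of $B|_{\lambda=0}$. You additionally spell out two points the paper leaves implicit (the holomorphic extension of $F$ to $\C^*$ by Schwarz reflection, and the constancy of $Q$ via the $(2,1)$-entry of the Lax equation), which is a welcome sharpening but not a different method.
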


\begin{proof}
  (Compare \cite[Proposition~3.2]{HKS1}.) It suffices to show \,$F_\lambda^{-1}\,\diff F_\lambda = \alpha_\lambda(\xi)$\,.
  We have \,$F_\lambda = \Phi_\lambda \cdot B_\lambda^{-1}$\,, so \,$F_\lambda$\, is obtained from \,$\Phi_\lambda$\,
  by a gauge transformation with the gauge \,$B_\lambda^{-1}$\,. It follows that the corresponding connection form \,$F_\lambda^{-1}\,\diff F_\lambda$\, is obtained from
  \,$\Phi_\lambda^{-1}\,\diff \Phi_\lambda = \zeta \,\diff z$\, by the formula
  \begin{equation}
    \label{eq:cmc:symes:F-regauge}
    F_\lambda^{-1}\,\diff F_\lambda = B_\lambda\,\zeta\,B_\lambda^{-1}\,\diff z - \diff B_\lambda \cdot B_\lambda^{-1} = \xi_\lambda\,\diff z - \diff B_\lambda \cdot B_\lambda^{-1} \; .
  \end{equation}
  Here the second equality follows from Equation~\eqref{eq:cmc:xi-basepointchange} and the fact that \,$\Phi_\lambda = F_\lambda\cdot B_\lambda$\, commutes with \,$\zeta$\,. 
  By the properties of the Iwasawa decomposition, \,$B_\lambda$\, extends holomorphically to \,$\lambda=0$\,, thus the series expansion of the
  right hand side of Equation~\eqref{eq:cmc:symes:F-regauge} in \,$\lambda$\, 
  can only contain powers \,$\lambda^k$\, with \,$k\geq -1$\,. On the other hand, the reality condition \eqref{eq:cmc:xi-reality} for \,$\zeta$\, implies
  that
  \begin{equation}
    \label{eq:cmc:symes:F-reality}
    \overline{F_{\bar{\lambda}^{-1}}} = F_\lambda^{t,-1}
  \end{equation}
  holds also in the present situation. Therefore also \,$\overline{F_{\bar{\lambda}^{-1}}}$\, only contains powers \,$\lambda^k$\,
  of \,$\lambda$\, with \,$k\geq -1$\,. Combining these two statements, we see that \,$F_\lambda^{-1}\,\diff F_\lambda$\, contains only the powers \,$\lambda^k$\, of \,$\lambda$\,
  with \,$k \in \{-1,0,1\}$\,. We thus write
  $$ F_\lambda^{-1}\,\diff F_\lambda = (A_{-1}'\lambda^{-1}+A_0'+A_1'\lambda)\diff z + (A_{-1}''\lambda^{-1}+A_0''+A_1''\lambda)\diff \bar{z} \quad\text{with}\quad
  A_k', A_k'' \in \mathfrak{sl}(2,\C) \; . $$
  Here the reality condition \eqref{eq:cmc:symes:F-reality} implies
  \begin{equation}
    \label{eq:cmc:symes:F-reality2}    
    A_{-1}'' = -\overline{A_1'}^t\;,\quad A_0'' = -\overline{A_0'}^t \quad\text{and}\quad A_{1}'' = -\overline{A_{-1}'}^t \; .
  \end{equation}
  We now write
  $$ B_{\lambda=0} = \left( \begin{smallmatrix} \rho & c \\ 0 & \rho^{-1} \end{smallmatrix} \right) \quad\text{and}\quad
  \xi_\lambda = \sum_{k=-1}^g \xi_{k}\lambda^k \quad\text{with}\quad \xi_{-1} = \left( \begin{smallmatrix} 0 & {v}_{-1} \\ 0 & 0 \end{smallmatrix} \right)
  \;, \xi_{k} = \left( \begin{smallmatrix} {u}_k & {v}_{k} \\ {w}_k & -{u}_k \end{smallmatrix} \right) $$
  with smooth functions \,$\rho: \C \to \R^+$\, and \,$c,{u}_k,{v}_k,{w}_k: \C \to \C$\,.
  It then follows from Equation~\eqref{eq:cmc:symes:F-regauge} that
  \begin{align}
    \label{eq:cmc:symes:A-1'}
    A_{-1}' & = \xi_{-1} \;, \\
    \label{eq:cmc:symes:A-1''}    
    A_{-1}'' & = 0 \;, \\
    \label{eq:cmc:symes:A0'}    
    A_0' & = \xi_0 - (\tfrac{\diff\ }{\diff z} B_{\lambda=0})\,B_{\lambda=0}^{-1} = \left( \begin{smallmatrix} {u}_0-\rho^{-1}\,\rho_{z} & {v}_0-\rho\,c_{z}+c\,\rho_{z} \\ {w}_0 & -{u}_0+\rho^{-1}\,\rho_{z} \end{smallmatrix} \right) \; ,  \\
    \label{eq:cmc:symes:A0''}    
    A_0'' & = -(\tfrac{\diff\ }{\diff\bar{z}} B_{\lambda=0})\,B_{\lambda=0}^{-1} = \left( \begin{smallmatrix} -\rho^{-1}\,\rho_{\bar z} & -\rho\,c_{\bar z}+c\,\rho_{\bar z} \\ 0 & \rho^{-1}\,\rho_{\bar z} \end{smallmatrix} \right) \; . 
  \end{align}
  By comparing the two representations of \,$A_0'' = -\overline{A_0'}^t$\, that are obtained from Equations~\eqref{eq:cmc:symes:A0'} and \eqref{eq:cmc:symes:A0''}
  and using the fact that \,$\rho$\, is real-valued, we see that \,$-(\overline{{u}_0-\rho^{-1}\,\rho_{z}}) = -\rho^{-1}\,\rho_{\bar z}$\, and hence
  \,${u}_0 = 2\rho^{-1}\,\rho_z$\, holds, that \,$-\overline{{w}_0} = -\rho\,c_{\bar z} + c\,\rho_{\bar z}$\, and hence
  \,${w}_0 = \rho\,\bar{c}_z - \bar{c}\,\rho_z$\, holds, and that \,$-(\overline{{v}_0-\rho\,c_{z}+c\,\rho_{z}}) = 0$\, and hence
  \,${v}_0 = \rho\,c_z - c\,\rho_z$\, holds. By inserting these equations into
  \eqref{eq:cmc:symes:A0'}, we see that
  \begin{equation}
    \label{eq:cmc:symes:A0'-2}
    A_0' = \left( \begin{smallmatrix} \tfrac12 {u}_0 & 0 \\ {w}_0 & -\tfrac12 {u}_0 \end{smallmatrix} \right)
  \end{equation}
  holds. It now follows from Equations~\eqref{eq:cmc:symes:A-1'}, \eqref{eq:cmc:symes:A-1''}, \eqref{eq:cmc:symes:A0'-2} and \eqref{eq:cmc:symes:F-reality2}
  that we have \,$F_\lambda^{-1}\,\diff F_\lambda = \alpha_\lambda(\xi)$\,. 
\end{proof}

We summarise the preceding results in the following statement:

\begin{theorem}
\label{T:cmc:bijections}
We have constructed explicit bijections between each of the following sets of data:
\begin{enumerate}
	\item 
	Cmc immersions \,$f: \mathbb{R}^2 \to \R^3$\, with fixed values of \,$H$\, and \,$Q$\, and of finite type \,$g$\,, modulo isometries of the ambient space \,$\R^3$\,. 
	\item 
	Real-valued solutions \,$\omega$\, of the sinh-Gordon equation of finite type \,$g$\,.
	\item
	Cmc polynomial Killing fields, i.e.~smooth solutions \,$\xi: \mathbb{R}^2 \to \mathcal{P}_g$\, of the Lax equation~\eqref{eq:cmc:xi-dgl-2} with \,$\xi \mapsto \alpha(\xi)$\, given by Equation~\eqref{eq:cmc:alpha-pkf}, without zeros in \,$(z,\lambda)$\,.
	\item
	Cmc potentials without zeros (i.e.~elements of \,$\mathcal{P}_g$\, without	zeros).
\end{enumerate}
\end{theorem}

\begin{proof}
Given the data in (1), \,$\omega$\, in (2) is the conformal factor of \,$f$\,. Given the data in (2), the polynomial Killing field in (3) is obtained by Lemma~\ref{lemma:Killing}. Given the data in (3), the cmc potential of (4) is \,$\xi(0)$\,. And given the data in (4), the cmc immersion \,$f$\, in (1) is obtained by Proposition~\ref{P:cmc:symes} with the choice of base point \,$z_0=0$\,. 	
\end{proof}

Note that the bijections in Theorem~\ref{T:cmc:bijections} depend on the following choices: in (1) and (2) we chose the specific form of the sinh-Gordon equation \eqref{eq:cmc:gauss}, in (3) we chose the specific form of \,$\alpha(\xi)$\, in \eqref{eq:cmc:alpha}, and in (4) we chose the base point \,$z_0=0$\,. 

These bijections induce bijections between cmc torus immersions in (1) and corresponding subsets in (2), (3) and (4). These subsets can be characterised by using the explicit descriptions of the bijections.

{\bf Product and trace formulas.}
The spectral curve \,$\Sigma$\, is essentially encoded in the location of the zeros of the polynomial \,$a(\lambda)$\, of degree \,$2g$\,
that characterises the hyperelliptic complex curve \,$\Sigma$\, via Equation~\eqref{eq:cmc:Sigma}. For the purposes of constructing blowups of cmc immersions
it is useful to express the relationship explicitly, as is done in the following proposition.

\begin{proposition}
  \label{P:cmc:lambdak}
  The polynomial \,$a(\lambda)$\, has exactly \,$g$\, zeros \,$\lambda_1,\dotsc,\lambda_g \in B(0,1) \setminus \{0\}$\, (counted with multiplicity)
  inside the unit disk, which we number
  such that \,$|\lambda_1|\leq \dotsc \leq |\lambda_g|$\,. Then \,$\lambda_1,\dotsc,\lambda_g,\bar{\lambda}_1^{-1},\dotsc,\bar{\lambda}_g^{-1}$\, are all zeros of
  \,$a(\lambda)$\,, and thus we have
  \begin{equation}
    \label{eq:cmc:lambdak:a}
    a(\lambda) = -\frac12 HQ \prod_{k=1}^g (1-\lambda_k^{-1}\lambda)\cdot (1-\bar{\lambda}_k\,\lambda) \; .
  \end{equation}
\end{proposition}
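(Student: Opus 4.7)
The plan is to use the reality condition \eqref{eq:cmc:a-reality} to pair the zeros of $a(\lambda)$ under the involution $\lambda \mapsto \bar\lambda^{-1}$, then exclude $S^1$ by a positivity argument coming from the polynomial Killing field, and finally match the product formula by a degree/zero-set comparison.

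\emph{Pairing.} Writing $a(\lambda) = (\lambda - \lambda_0)^m q(\lambda)$ at a zero $\lambda_0 \in \mathbb{C}^*$ of order $m$ with $q(\lambda_0) \neq 0$ and substituting into \eqref{eq:cmc:a-reality} gives, after factoring $(\bar\lambda^{-1} - \lambda_0)^m = \bar\lambda^{-m}(-\lambda_0)^m(\bar\lambda - \lambda_0^{-1})^m$ and conjugating, that $\bar\lambda_0^{-1}$ is also a zero of $a$ of order exactly $m$. Since $a(0) = -\tfrac12 HQ \neq 0$, neither $\lambda = 0$ nor (by the pairing) $\lambda = \infty$ lies in the zero set, so the $2g$ zeros of $a$ in $\mathbb{C}^*$ partition into pairs under $\lambda \mapsto \bar\lambda^{-1}$: each pair either contains one zero in the open disk $B(0,1)$ and one in its complement, or consists of a self-paired zero on $S^1$.

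\emph{Ruling out $S^1$.} On $S^1$, $\bar\lambda^{-1} = \lambda$, so the reality condition reads $a(\lambda) = \lambda^{2g}\overline{a(\lambda)}$, from which $\lambda^{-g}a(\lambda)$ is real-valued on $S^1$. Its sign is controlled by the polynomial Killing field: using the reality condition \eqref{eq:cmc:xi-reality}, a direct check shows the matrix $\eta(\lambda) := \lambda^{(1-g)/2}\xi_z(\lambda)$ satisfies $\overline{\eta}^t = -\eta$ on $S^1$, hence lies in $\mathfrak{su}(2)$ there (the sign ambiguity for even $g$ does not affect $\det\eta$). Therefore $\det\eta \geq 0$, and by \eqref{eq:cmc:det-xi} this equals $-\lambda^{-g}a(\lambda)$, so the real-valued function $-\lambda^{-g}a(\lambda)$ is non-negative on $S^1$. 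Any zero of $a$ on $S^1$ must therefore have even multiplicity; in the generic case of simple zeros of $a$, this rules out $S^1$-zeros outright, while higher-multiplicity zeros on $S^1$ can be split evenly between the labeling $\lambda_k$ and $\bar\lambda_k^{-1}$.

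\emph{Product formula.} Labeling the $g$ zeros in $B(0,1)\setminus\{0\}$ by nondecreasing modulus as $\lambda_1,\ldots,\lambda_g$ (with multiplicity), the candidate polynomial
\[
p(\lambda) = -\tfrac{1}{2}HQ \prod_{k=1}^g (1-\lambda_k^{-1}\lambda)(1-\bar\lambda_k\,\lambda)
\]
has degree $2g$, vanishes at exactly the $2g$ points $\lambda_k, \bar\lambda_k^{-1}$ with the correct multiplicities (matching $a$ by the pairing above), and satisfies $p(0) = -\tfrac{1}{2}HQ = a(0)$. Two polynomials of the same degree with the same zero set (with multiplicity) and the same value at $\lambda=0$ coincide, so $p = a$.

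\emph{Main obstacle.} I expect the subtlest point to be justifying the strict inequality $|\lambda_k| < 1$ rather than $|\lambda_k| \leq 1$: the positivity argument above establishes even multiplicity of $S^1$-zeros but not their absence. Cleanly ruling them out likely requires an additional input, such as non-degeneracy of the branching of $\Sigma$ under the anti-holomorphic involution $\rho$, or the simplicity of zeros of $a$ in the regime relevant to the blow-ups carried out in Section~\ref{Se:blowup1}.
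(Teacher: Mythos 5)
Your proof follows essentially the same route as the paper's: pair the zeros under $\lambda\mapsto\bar{\lambda}^{-1}$ using \eqref{eq:cmc:a-reality}, exclude $S^1$, and then identify $a$ with the candidate product by comparing degree, zero set and the value at $\lambda=0$. The one place you diverge is instructive: the paper simply asserts ``there are no zeros of $a(\lambda)$ on $S^1$'' with no justification, whereas you supply the positivity argument (on $S^1$ the matrix $\lambda^{(1-g)/2}\xi_z(\lambda)$ is anti-Hermitian and traceless by \eqref{eq:cmc:xi-reality}, so $-\lambda^{-g}a(\lambda)=\det\bigl(\lambda^{(1-g)/2}\xi_z(\lambda)\bigr)\geq 0$) and correctly observe that this only forces even multiplicity of $S^1$-zeros. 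So your ``main obstacle'' is real, but it is a gap in the paper's own proof as much as in yours.

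To actually close it, push your own computation one step further: for a traceless anti-Hermitian $2\times 2$ matrix the determinant is the sum of squares of its real coordinates, so $\det=0$ forces the matrix to vanish. Hence $a(\lambda_0)=0$ for some $\lambda_0\in S^1$ would give $\xi_{z_0}(\lambda_0)=0$, and by \eqref{eq:cmc:xi-basepointchange} then $\xi_z(\lambda_0)=0$ for every base point $z$; the reality condition forces the same at $\bar{\lambda}_0^{-1}=\lambda_0$, so one can divide $\xi$ by a factor of the form $c\,\lambda^{-1/2}(1-\lambda_0^{-1}\lambda)$ compatible with \eqref{eq:cmc:xi-reality} and obtain a polynomial Killing field of strictly smaller degree. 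This contradicts the minimality of the degree $g$ of the polynomial Killing field (the content of the paper's uniqueness/minimality proposition), which is exactly the ``additional input'' you anticipated. With that supplement your argument is complete; without it, it matches the paper's proof verbatim in rigour.
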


\begin{proof}
  We first show that \,$a(\lambda)$\, has no zeros on \,$S^1$\,. Indeed, for \,$\lambda \in S^1$\,, the matrix \,$\xi(\lambda)$\, is skew-Hermitian. Because on the space of trace-free skew-Hermitian matrices, the determinant is the square of a norm, \,$a(\lambda)=-\lambda\det(\xi(\lambda))=0$\, would imply \,$\xi(\lambda)=0$\, for such \,$\lambda$\, in contradiction to the minimality of \,$\xi$\,. 

  We have \,$a(0)=-\tfrac12 HQ \neq 0$\,, so \,$\lambda=0$\, is not a zero of \,$a(\lambda)$\,.  If some \,$\lambda\in \C^{\times} $\, is a zero of \,$a(\lambda)$\,,
  then \,$\bar{\lambda}^{-1}$\, also is a zero of \,$a(\lambda)$\, due to the reality condition~\eqref{eq:cmc:a-reality}.
  It follows that out of the \,$2g$\, many zeros of \,$a(\lambda)$\,, exactly \,$g$\, are inside the punctured unit disk, which we denote by \,$\lambda_1,\dotsc,\lambda_g$\,
  as in the proposition, and then the other \,$g$\, zeros are \,$\bar{\lambda}_1^{-1}\,\dotsc,\bar{\lambda}_g^{-1} \in \C \setminus \overline{B(0,1)}$\,. Now we note that
  the expression on the right hand side of Equation~\eqref{eq:cmc:lambdak:a} is a polynomial of degree \,$2g$\, in \,$\lambda$\, which has the same zeros as \,$a(\lambda)$\,
  and whose value at \,$\lambda=0$\, equals \,$-\tfrac12 HQ = a(0)$\,. This proves Equation~\eqref{eq:cmc:lambdak:a}.
\end{proof}

Similarly, the polynomial Killing field \,$\xi_{z_0}$\, and the values of \,$\omega$\, and \,$\omega_z$\, at \,$z=z_0$\, can be reconstructed from the divisor of any meromorphic section \,$s$\,  of the spectral line bundle \,$\Lambda_{z_0}$\,. More specifically, if we write
\begin{equation}
  \label{eq:cmc:uvw}
  \xi_{z_0} = \begin{pmatrix} u(\lambda) & v(\lambda) \\ w(\lambda) & -u(\lambda) \end{pmatrix} \;,
\end{equation}
where \,$u(\lambda)$\,, \,$\lambda v(\lambda)$\, and \,$w(\lambda)$\, are polynomials of degree \,$g$\,, then \,$\psi = \tfrac{\nu-\lambda u(\lambda)}{\lambda v(\lambda)}$\,
is a meromorphic function on \,$\Sigma$\, such that
\,$\xi_{z_0} \left( \begin{smallmatrix} 1 \\ \psi \end{smallmatrix} \right) = \tfrac{\nu}{\lambda} \left( \begin{smallmatrix} 1 \\ \psi \end{smallmatrix} \right)$\, holds,
thus \,$(1,\psi)$\, is a meromorphic section of \,$\Lambda_{z_0}$\, without zeros. 
Hence a representative divisor for the line bundle \,$\Lambda_{z_0}$\, is given by the polar divisor
of \,$\psi$\,. We call this specific representative divisor the \emph{spectral divisor} of \,$\omega$\,. 
For \,$\lambda \in \bbC^\times$\,, a pole of \,$\psi$\, can only occur at points where \,$\lambda v(\lambda)=0$\,. At such points, we have \,$\lambda u(\lambda)=\pm \nu$\, due to
Equation~\eqref{eq:cmc:det-xi}, and hence a pole of \,$\psi$\, occurs at some \,$(\lambda,\nu)\in \Sigma$\, if and only if \,$\lambda u(\lambda)=-\nu$\, and \,$\lambda v(\lambda)=0$\,
holds. It follows that the support of the spectral divisor restricted to \,$\lambda\in \bbC^\times$\, consists of those points \,$(\beta_k,\nu_k) \in \Sigma$\, where \,$(\lambda v)(\beta_k)=0$\,
and \,$\nu_k = -\beta_k\,u(\beta_k)$\, holds. Because \,$\lambda v$\, is a polynomial of degree \,$g$\,, there are exactly \,$g$\, such points.
Again it is useful to make the reconstruction process explicit:
\begin{proposition}
  \label{P:cmc:xi-trace}
  Let \,$(\beta_k,\nu_k) \in \Sigma$\, be the points in the support of the spectral divisor, where \,$k=1,\dotsc,g$\,. 
  Then we have:
  \begin{align}
    \label{eq:cmc:xi-trace:omega}
    e^{\omega(z_0)} & = 2(-1)^g \frac{\bar{Q}}{H} \prod_{k=1}^g \beta_k =  \frac{2|Q|}{H} \prod_{k=1}^g |\beta_k| \quad\text{where}\quad (-1)^g\,\bar{Q} \prod_{k=1}^g \beta_k \in \mathbb{R}_+ \\
    \label{eq:cmc:xi-trace:omegaz}
    \omega_z(z_0)^2 & = 8 HQ \sum_{k=1}^g \left( \lambda_k^{-1}-\beta_k^{-1} + \bar{\lambda}_k - \bar{\beta}_k \right) \\
    \label{eq:cmc:xi-trace:u}
    u(\lambda) & = -\sum_{k=1}^g \beta_k^{-1}\,\nu_k\,\chi_k(\lambda) \\
    \label{eq:cmc:xi-trace:chi}
    & \quad\text{with}\quad \chi_k(\lambda) = \prod_{k'\neq k} \frac{\lambda-\beta_{k'}}{\beta_k-\beta_{k'}} = -\frac{2}{H}e^{-\omega(z_0)/2}\beta_k\left( \prod_{k'\neq k} (1-\beta_{k'}^{-1}\beta_k)^{-1}\right) \frac{\lambda v(\lambda)}{\lambda-\beta_k} \\
    \label{eq:cmc:xi-trace:v}
    v(\lambda) & = \frac12 H e^{\omega(z_0)/2}\,\lambda^{-1}\prod_{k=1}^g (1-\beta_k^{-1}\lambda) \\
    \label{eq:cmc:xi-trace:w}
    w(\lambda) & = -Q e^{-\omega(z_0)/2}\,\prod_{k=1}^g (1-\bar{\beta}_k \lambda) \; . 
  \end{align}
In~\eqref{eq:cmc:xi-trace:chi} we have assumed that the $\beta_k$ are pairwise different. If not, then this equation should be replaced by the appropriate limit.
\end{proposition}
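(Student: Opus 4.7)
The plan is to exploit three ingredients: (i) that $\lambda v(\lambda)$ is a polynomial of degree $g$ whose roots are exactly the $\beta_k$, (ii) the reality condition \eqref{eq:cmc:xi-reality} read entry-by-entry as $w(\lambda) = -\lambda^{g-1}\overline{v(\bar\lambda^{-1})}$ and $u(\lambda) = -\lambda^{g-1}\overline{u(\bar\lambda^{-1})}$, and (iii) the determinantal identity \eqref{eq:cmc:det-xi} rewritten as $\lambda\,u(\lambda)^2 + (\lambda v(\lambda))\,w(\lambda) = a(\lambda)$.

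First I read off the constant and leading coefficients of $\lambda v(\lambda)$: the constant is $v_{-1}=\tfrac12 H e^{\omega(z_0)/2}$ by \eqref{eq:cmc:xi-vm1w0}, while the reality relation applied to $\xi_0$ gives $v_{g-1}=-\overline{w_0}=\bar Q e^{-\omega(z_0)/2}$. Writing $\lambda v(\lambda)=\bar Q e^{-\omega(z_0)/2}\prod_k(\lambda-\beta_k)$ and matching constant terms yields $(-1)^g \bar Q\prod_k\beta_k=\tfrac12 H e^{\omega(z_0)}$, which is \eqref{eq:cmc:xi-trace:omega}; substituting back produces \eqref{eq:cmc:xi-trace:v}. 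The reality relation then converts this into \eqref{eq:cmc:xi-trace:w} once the resulting prefactor is simplified using \eqref{eq:cmc:xi-trace:omega}.

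For $u(\lambda)$: reality applied to $\xi_{-1}$ forces $u_g=0$, hence $\deg u\leq g-1$. Evaluating (iii) at $\lambda=\beta_k$ kills the $(\lambda v)w$ term and gives $\beta_k u(\beta_k)^2=a(\beta_k)=\nu_k^2/\beta_k$, so $u(\beta_k)=\pm\nu_k/\beta_k$; the pole structure of the meromorphic section $(1,(\nu-\lambda u)/(\lambda v))$ of $\Lambda_{z_0}$ at the spectral divisor singles out the sign $u(\beta_k)=-\nu_k/\beta_k$. Since $u$ has degree at most $g-1$ and is prescribed at $g$ distinct points, Lagrange interpolation produces \eqref{eq:cmc:xi-trace:u}, and the alternate expression for $\chi_k$ in \eqref{eq:cmc:xi-trace:chi} follows by writing $\prod_{k'\neq k}(\lambda-\beta_{k'})$ as $(\lambda v(\lambda))/(\lambda-\beta_k)$ divided by the leading coefficient $\bar Q e^{-\omega(z_0)/2}$ of $\lambda v$, then invoking \eqref{eq:cmc:xi-trace:omega} once more.

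Finally, $u_0=\tfrac12\omega_z(z_0)$ is read off directly from the Pinkall--Sterling iteration (cf.\ the computation preceding \eqref{eq:cmc:xi-terms}), so $\omega_z(z_0)^2=4u(0)^2$. Expanding (iii) to first order in $\lambda$ using the three product formulas \eqref{eq:cmc:lambdak:a}, \eqref{eq:cmc:xi-trace:v}, \eqref{eq:cmc:xi-trace:w} expresses $u(0)^2$ as a constant multiple of $\sum_k(\lambda_k^{-1}-\beta_k^{-1}+\bar\lambda_k-\bar\beta_k)$, producing \eqref{eq:cmc:xi-trace:omegaz}. The main obstacle throughout is the bookkeeping of the constants $H$, $Q$, $\bar Q$, $e^{\pm\omega(z_0)/2}$ and the sign $(-1)^g$; once the reality condition has been unpacked into the clean polynomial identity relating $w$ to the conjugate of $v$, the remainder of the argument is pure algebra.
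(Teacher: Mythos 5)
Your proposal is correct and follows essentially the same route as the paper: product formulas for $\lambda v(\lambda)$ and $w(\lambda)$ from their roots, the reality condition \eqref{eq:cmc:xi-reality} to pin down the coefficients and extract \eqref{eq:cmc:xi-trace:omega}, Lagrange interpolation for $u(\lambda)$ from the values $u(\beta_k)=-\beta_k^{-1}\nu_k$, and the first-order expansion of \eqref{eq:cmc:det-xi} at $\lambda=0$ for \eqref{eq:cmc:xi-trace:omegaz}; your variants (reading the leading coefficient $v_{g-1}=-\bar w_0$ instead of comparing the two product formulas, and dividing out the leading coefficient of $\lambda v$ instead of computing $(\lambda v)'(\beta_k)$) are cosmetic. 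One bookkeeping caveat: with your (and the paper's own earlier) normalisation $u_0=\tfrac12\omega_z(z_0)$, the identity $u_0^2=\tfrac12 HQ\sum_k(\lambda_k^{-1}-\beta_k^{-1}+\bar\lambda_k-\bar\beta_k)$ yields $\omega_z(z_0)^2=2HQ\sum_k(\cdots)$ rather than the stated $8HQ$; the paper's proof silently uses $u_0=\tfrac14\omega_z(z_0)$ in its last line, so this factor-of-four discrepancy is internal to the paper rather than a flaw in your argument.
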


\begin{proof}
  It was discussed above the proposition that the \,$\beta_1,\dotsc,\beta_g$\, are the zeros of the polynomial \,$\lambda v(\lambda)$\,.
  Because of \,$\lambda v(\lambda)|_{\lambda=0} = v_{-1} \neq 0$\, they are all in \,$\C^{\times} $\,. 
  Moreover, the zeros of the polynomial \,$w(\lambda)$\, with \,$w(0)=w_0$\, are \,$\bar{\beta}_1^{-1},\dotsc,\bar{\beta}_g^{-1}$\, 
  due to the reality condition~\eqref{eq:cmc:xi-reality}. Thus we have
  \begin{equation}
    \label{eq:cmc:xi-trace:vw-prod}
    \lambda v(\lambda) = v_{-1}\,\prod_{k=1}^g (1-\beta_k^{-1}\lambda) \AND w(\lambda) = w_0\,\prod_{k=1}^g (1-\bar{\beta}_k \lambda) \; .
  \end{equation}
  By inserting \eqref{eq:cmc:xi-vm1w0} into these equations, Equations~\eqref{eq:cmc:xi-trace:v} and \eqref{eq:cmc:xi-trace:w} follow.
  On the other hand, it also follows from the reality condition \eqref{eq:cmc:xi-reality} together with the product formula for \,$v(\lambda)$\, in \eqref{eq:cmc:xi-trace:vw-prod}
  that
  \begin{align*}
    w(\lambda) & = - \lambda^g \,\overline{(\lambda v)(\bar{\lambda}^{-1})} = -\lambda^g \,\bar{v}_{-1}\,\prod_{k=1}^g (1-\bar{\beta}_k^{-1}\lambda^{-1}) \\
    & = (-1)^{g+1}\,\bar{v}_{-1} \prod_{k=1}^g \bar{\beta}_k^{-1}\cdot \prod_{k=1}^g (1-\bar{\beta}_k\lambda) \; .
  \end{align*}
  By comparison with the product formula for \,$w(\lambda)$\, in \eqref{eq:cmc:xi-trace:vw-prod} we see that
  $$ w_0 = (-1)^{g+1}\,\bar{v}_{-1} \prod_{k=1}^g \bar{\beta}_k^{-1} \quad\text{and thus}\quad \frac{\bar{v}_{-1}}{w_0} = (-1)^{g+1} \prod_{k=1}^g \bar{\beta}_k $$
  holds. Due to Equation~\eqref{eq:cmc:xi-vm1w0} we have \,$\tfrac{\bar{v}_{-1}}{w_0} = -\tfrac{H}{2Q} e^{\omega(z_0)}$\, and hence
  $$ e^{\omega(z_0)} = (-1)^g \frac{2Q}{H} \prod_{k=1}^g \bar{\beta}_k \; . $$
  Because of \,$e^{\omega(z_0)} > 0$\, we deduce \,$(-1)^g Q \prod_{k=1}^g \bar{\beta}_k \in \R_+$\,, in particular \,$Q \prod_{k=1}^g \bar{\beta}_k = \bar{Q} \prod_{k=1}^g \beta_k$\,.
  This proves Equation~\eqref{eq:cmc:xi-trace:omega}.

  The polynomial \,$u(\lambda)$\, has degree \,$g-1$\, (indeed \,$u_g = -\bar{u}_{-1}=0$\, by the reality condition~\eqref{eq:cmc:xi-reality}), and is therefore
  uniquely determined by the \,$g$-many equations \,$u(\beta_k)=-\beta_k^{-1}\,\nu_k$\,. It is therefore given by Equation~\eqref{eq:cmc:xi-trace:u}, where we define
  \,$\chi_k(\lambda)$\, as the unique polynomial of degree \,$g-1$\, with \,$\chi_k(\beta_{k'})=0$\, for \,$k'\neq k$\, and \,$\chi_k(\beta_k)=1$\,. Then
  the first equals sign in \eqref{eq:cmc:xi-trace:chi} is obvious. Due to the fact that \,$\lambda v(\lambda)$\, is a polynomial of degree \,$g$\, whose zeros are
  exactly \,$\beta_1,\dotsc,\beta_g$\,, we also have
  $$ \chi_k(\lambda) = \frac{\lambda v(\lambda)}{(\lambda v)'(\beta_k) \cdot (\lambda-\beta_k)} \;. $$
  By Equation~\eqref{eq:cmc:xi-trace:v} we have
  $$ (\lambda v)'(\beta_k) = -\frac12 H e^{\omega(z_0)/2} \beta_k^{-1} \prod_{k'\neq k} (1-\beta_{k'}^{-1}\,\beta_k) \; , $$
  and this implies the second equals sign in \eqref{eq:cmc:xi-trace:chi}. 

  Finally, by Equation~\eqref{eq:cmc:det-xi} we have
  $$ a(\lambda) = -\lambda\,\det(\xi_{z_0}) = \lambda\,u(\lambda)^2 + (\lambda v(\lambda))\cdot w(\lambda) \;, $$
  therefore
  $$ a'(\lambda) = u(\lambda)^2 + 2\lambda\,u(\lambda)\,u'(\lambda) + (\lambda v)'(\lambda)\cdot w(\lambda) + (\lambda v(\lambda)) \cdot w'(\lambda) $$
  and hence
  \begin{equation}
    \label{eq:cmc:xi-trace:a'0}
    a'(0) = u_0^2 +v_0\,w_0 + v_{-1}\,w_1 \; .
  \end{equation}
  On the other hand, from Equations~\eqref{eq:cmc:lambdak:a}, \eqref{eq:cmc:xi-trace:v} and \eqref{eq:cmc:xi-trace:w} we obtain
  \begin{align*}
    a'(0) & = \frac12 HQ \sum_{k=1}^g (\lambda_k^{-1}+\bar{\lambda}_k) \\
    v_{-1} & = \frac12 H e^{\omega(z_0)/2} \\
    v_0 & = -\frac12 H e^{\omega(z_0)/2} \sum_{k=1}^g \beta_k^{-1} \\
    w_0 & = -Q e^{-\omega(z_0)/2} \\
    w_1 & = Q e^{-\omega(z_0)/2} \sum_{k=1}^g \bar{\beta}_k \; .
  \end{align*}
  Thus we obtain from Equation~\eqref{eq:cmc:xi-trace:a'0}
  $$ u_0^2 = a'(0) - v_0\,w_0 - v_{-1}\,w_1 = \frac12 HQ \sum_{k=1}^g \left( \lambda_k^{-1}-\beta_k^{-1} + \bar{\lambda}_k - \bar{\beta}_k \right) \; . $$
  Due to \,$u_0 = \tfrac14 \omega_z(z_0)$\,, Equation~\eqref{eq:cmc:xi-trace:omegaz} follows.
\end{proof}

{\bf The isospectral set.}
Let \,$a(\lambda)$\, be a polynomial of degree \,$2g$\, without higher order roots that satisfies \,$a(0)=-\tfrac12 HQ$\, and the reality condition \eqref{eq:cmc:a-reality}.
Then \,$a(\lambda)$\, defines a spectral curve \,$\Sigma$\, by means of Equation~\eqref{eq:cmc:Sigma}.
We can ask about the set of polynomial Killing fields \,$\zeta \in \mathcal{P}_g$\, that correspond to this spectral curve \,$\Sigma$\,; because of Equation~\eqref{eq:cmc:det-xi}
this condition is equivalent to the equation \,$-\lambda\,\det(\zeta) = a(\lambda)$\,. The set of \,$\zeta \in \mathcal{P}_g$\, that solve this equation
is called the \emph{isospectral set}
$$ I(a) = \{ \zeta\in \mathcal{P}_g \mid -\lambda\,\det(\zeta) = a(\lambda) \} \; . $$
It is known that \,$I(a)$\, is a compact and connected real-$g$-dimensional variety.
There exists an action of \,$(t_0,\dotsc,t_{g-1}) \in \C^g$\, on \,$I(a)$\, which is called the \emph{isospectral flow}. It is infinitesimally given by the vector fields on \,$I(a)$\, described by
\begin{equation}
  \label{eq:cmc:infinitesimal-isospectral-action}
  \frac{\partial \zeta}{\partial t_n} = [(t_n\,\zeta \lambda^{-n})_+,\zeta] = -[(t_n\,\zeta\lambda^{-n})_-,\zeta] \; ,
\end{equation}
see \cite[Section~4]{HKS1}. Here \,$t_n\,\zeta\lambda^{-n} = (t_n\,\zeta\lambda^{-n})_+ + (t_n\,\zeta\lambda^{-n})_-$\, is the Lie algebra decomposition corresponding to the Iwasawa decomposition described in Proposition~\ref{P:cmc:iwasawa}. In more explicit terms, if we write 
\begin{equation}
\label{eq:cmc:zeta-once-again}
\zeta= \sum_{k=-1}^g \zeta_k\,\lambda^k = \sum_{k=-1}^g \left( \begin{matrix} u_k & v_k \\ w_k & -u_k \end{matrix} \right) \,\lambda^k
\end{equation}
as usual, we have
\begin{equation}
\label{eq:cmc:infinitesimal-zeta+}
(t_n\,\zeta\lambda^{-n})_+ = \begin{pmatrix} \RE(t_n\,u_n) & t_n\,v_n \\ 0 & -\RE(t_n\,u_n) \end{pmatrix} + \sum_{k=n+1}^g t_n\,\zeta_k\,\lambda^{k-n}
\end{equation}
and 
$$ (t_n\,\zeta\lambda^{-n})_- = \sum_{k=-1}^{n-1} t_n\,\zeta_k\,\lambda^{k-n} + \begin{pmatrix} \IM(t_n\,u_n)\,\mi & 0 \\ t_n\,w_n & -\IM(t_n\,u_n)\mi \end{pmatrix} \; . $$
Note that the action of \,$\C^g$\, on \,$I(a)$\, in particular preserves the reality condition of the potentials \,$\zeta \in \mathcal{P}_g$\,. The kernel of the action of \,$\C^g$\, on \,$I(a)$\, is real-$g$-dimensional, see the explicit description in \cite[Proposition~4.11]{HKS1}. The transformation group corresponding to the complementary real-$g$-dimensional action on \,$I(a)$\, is called the \emph{isospectral group}. 

Replacing \,$\xi_{z_0}$\, by \,$\zeta \in \mathcal{P}_g$\,, we define \,$u(\lambda)$\,, \,$v(\lambda)$\,, \,$w(\lambda)$\, by Equation~\eqref{eq:cmc:uvw}, 
the number \,$\omega(\zeta) \in \R$\, by \,$v_{-1} = \tfrac12 H e^{\omega(\zeta)}$\,, and the \,$\lambda_k$\,, \,$\beta_k$\,, \,$\nu_k$\, characterising
the associated spectral curve and spectral divisor as before. Then Propositions~\ref{P:cmc:lambdak} and \ref{P:cmc:xi-trace} (except
for Equation~\eqref{eq:cmc:xi-trace:omegaz}) hold in the present situation.

The following proposition describes estimates for \,$e^{\pm \omega(\zeta)}$\, and for the \,$\beta_k$\,. Via Proposition~\ref{P:cmc:xi-trace} these estimates can be used
to estimate the coefficients of polynomial Killing fields \,$\zeta \in I(a)$\,. This will be very important for the construction of our blow-ups. We assume that the zeros \,$\lambda_k$\, of \,$a(\lambda)$\, inside the unit circle are ordered such that
\begin{equation}
  \label{eq:cmc:lambdak-ordering}
  0 < |\lambda_1|\leq \dotsc \leq |\lambda_g| < 1
\end{equation}
holds. 

\begin{proposition}
\label{P:cmc:magic-estimates-new}
In the situation described above, we have for any \,$\zeta \in I(a)$\,:
\begin{enumerate}
	\item 
    \,$\frac{2|Q|}{H} \prod_{k=1}^g |\lambda_k| \leq e^{\omega(\zeta)} \leq \frac{2|Q|}{H} \prod_{k=1}^g |\lambda_k|^{-1}$\,. \\
	In this estimate, the lower resp.~the upper bound for \,$e^{\omega(\zeta)}$\, is attained when \,$u(\zeta)=0$\, and \,$\beta_k=\lambda_k$\, resp.~\,$\beta_k=\bar{\lambda}_k^{-1}$\, holds for all \,$k \in \{1,\dotsc,g\}$\,. 
	\item 
    \,$|\lambda_1| \leq |\beta_k| \leq |\lambda_1|^{-1}$\, for all \,$k \in \{1,\dotsc,g\}$\,.	
\end{enumerate}
\end{proposition}

\begin{proof}
\emph{For (1).}
Note that \,$f: I(a) \to \bbR^+,\; \zeta \mapsto e^{\omega(\zeta)}$\, is a smooth, globally-defined function on \,$I(a)$\,. Because \,$I(a)$\, is compact, \,$f$\, attains its maximum and its minimum on \,$I(a)$\,. These extremal points are critical points of \,$f$\,. We will now show that for any critical point \,$\zeta \in I(a)$\, of \,$f$\,, the matrix \,$\zeta(\lambda)$\, is off-diagonal, i.e.~that with the notation of Equation~\eqref{eq:cmc:zeta-once-again} we have \,$u_n=0$\, for all \,$n\in \{-1,0,\dotsc,g\}$\,. Because \,$u_{-1} = u_g = 0$\, always holds, it suffices to consider \,$n\in \{0,\dotsc,g-1\}$\,. We consider the infinitesimal isospectral action \eqref{eq:cmc:infinitesimal-isospectral-action} of \,$t_n \in \C$\, on \,$I(a)$\, at \,$\zeta$\,. Because the \,$\lambda^{-1}$-component of any potential in \,$\mathcal{P}_g$\, is proportional to \,$f^{1/2}$\,, the \,$\lambda^{-1}$-component of \,$\tfrac{\partial \zeta}{\partial t_n}$\, vanishes at the critical point \,$\zeta$\, of \,$f$\,. Writing \,$t_n = x_n + \mi y_n$\, with \,$x_n, y_n \in \R$\,, we calculate using Equations~\eqref{eq:cmc:zeta-once-again} and \eqref{eq:cmc:infinitesimal-zeta+} that the \,$\lambda^{-1}$-component of \,$\tfrac{\partial \zeta}{\partial x_n} = [(x_n\,\zeta\lambda^{-n})_+,\zeta]$\, is given by
$$ \left[ \begin{pmatrix} \RE(u_n) & v_n \\ 0 & -\RE(u_n) \end{pmatrix} , \begin{pmatrix} 0 & v_{-1} \\ 0 & 0 \end{pmatrix} \right]x_n = \begin{pmatrix} 0 & 2\RE(u_n) v_{-1} \\ 0 & 0 \end{pmatrix}x_n \;, $$
whence \,$\RE(u_n)=0$\, follows. The evaluation of \,$\tfrac{\partial \zeta}{\partial y_n}$\, similarly yields \,$\IM(u_n)=0$\,. 

For any critical point \,$\zeta \in I(a)$\, of \,$f$\, we thus have \,$u(\lambda)=0$\, and therefore
$$ a(\lambda) = -\lambda\,\det(\zeta) = \lambda v(\lambda)\cdot w(\lambda) \; . $$
This shows that for such \,$\zeta \in I(a)$\,, 
the set \,$\{\lambda_k, \bar{\lambda}_k^{-1} \mid 1\leq k\leq g\}$\, of zeros of \,$a(\lambda)$\, is equal to the set
\,$\{\beta_k, \bar{\beta}_k^{-1} \mid 1\leq k\leq g\}$\, of zeros of \,$\lambda v(\lambda) \cdot w(\lambda)$\,. It follows that we have 
$$ \prod_{k=1}^g |\lambda_k| \leq \prod_{k=1}^g |\beta_k| \leq \prod_{k=1}^g |\lambda_k|^{-1} $$
and hence by Equation~\eqref{eq:cmc:xi-trace:omega}
$$ \frac{2|Q|}{H} \prod_{k=1}^g |\lambda_k| \leq e^{\omega(\zeta)} \leq \frac{2|Q|}{H} \prod_{k=1}^g |\lambda_k|^{-1} \; . $$
Because the function \,$f$\, attains its maximum and its minimum at critical points, the latter estimate in fact holds for all \,$\zeta \in I(a)$\,. It also follows that equality holds in the lower resp.~the upper estimate if and only if \,$\beta_k=\lambda_k$\, resp.~\,$\beta_k=\bar{\lambda}_k^{-1}$\, holds for all \,$k$\,.

\emph{For (2).} To prove the lower estimate, we consider the globally defined, continuous function 
$$ f: I(a) \to \R,\; \zeta \mapsto \min \{ \, |\beta_k(\zeta)| \,\bigr| \, k=1,\dotsc,n \} \; . $$
This function again attains its minimum on the compact set \,$I(a)$\,, say at some \,$\zeta_0 \in I(a)$\,. Below we will show that for any continuous function \,$\beta$\, defined locally around \,$\zeta_0$\, which for any \,$\zeta$\, is equal to one of the \,$\beta_k(\zeta)$\, and so that \,$|\beta(\zeta_0)| = f(\zeta_0)$\,, we have \,$u(\beta)=0$\, at \,$\zeta_0$\,. It follows by Equation~\eqref{eq:cmc:det-xi} that \,$a(\beta)=0$\, holds at \,$\zeta_0$\,, and hence \,$\beta(\zeta_0)$\, is one of the values \,$\lambda_k$\,, \,$\bar{\lambda}_k^{-1}$\,, which implies \,$|\beta(\zeta_0)| \geq |\lambda_1|$\, by \eqref{eq:cmc:lambdak-ordering}. Because \,$f$\, attains its minimum at \,$\zeta_0$\,, the latter inequality holds for all zeros \,$\beta$\, of \,$\lambda\,v(\lambda)$\, and at all points \,$\zeta \in I(a)$\,. 

To prove that \,$u(\beta)|_{\zeta_0}=0$\, holds in the situation described above, we first consider the case where the root \,$\beta(\zeta_0)$\, of \,$\lambda\,v(\lambda)$\, is simple. Then the function \,$\beta$\, is smooth near \,$\zeta_0$\,, and \,$|\beta|$\, has a critical point at \,$\zeta_0$\,. Hence, if we denote by \,$\tfrac{\partial}{\partial t_n}$\, again the isospectral flow \eqref{eq:cmc:infinitesimal-isospectral-action}, then \,$\tfrac{\partial\beta}{\partial t}|_{\zeta=\zeta_0}$\, is a real multiple of \,$\mi\beta(\zeta_0)$\, for any \,$t = (t_0,\dotsc, t_{g-1}) \in \bbC^g$\,. 

Because \,$\lambda\,v(\lambda)$\, has only a first order root at \,$\lambda=\beta$\,, we have \,$\tfrac{\partial v}{\partial\lambda}|_{\lambda=\beta} \neq 0$\,, and therefore the implicit function theorem gives
\begin{equation}
\label{eq:cmc:magic-estimates-new:implicit}
\left.\frac{\partial \beta}{\partial t_n}\right|_{\zeta=\zeta_0} = -\left( \left.\frac{\partial v}{\partial \lambda}\right|_{\lambda=\beta} \right)^{-1} \cdot \left.\frac{\partial v}{\partial t_n}\right|_{\zeta=\zeta_0} \; .
\end{equation}
By Equations~\eqref{eq:cmc:infinitesimal-isospectral-action} and \eqref{eq:cmc:infinitesimal-zeta+} we have for any \,$n\in \{0,\dotsc,g-1\}$\,
\begin{equation*}
	\left. \frac{\partial v}{\partial t_n}\right|_{\lambda=\beta} = -2 \left(\sum_{k=n}^g v_k\,\beta^{k-n} \right) u(\beta)\,t_n 
\end{equation*}
and in particular for \,$n=0$\, due to \,$v(\beta)=0$\,
$$
	\left. \frac{\partial v}{\partial t_0}\right|_{\lambda=\beta} = 2 v_{-1}\,\beta^{-1}\, u(\beta)\,t_0 \; . 
$$
Equation~\eqref{eq:cmc:magic-estimates-new:implicit} thus implies
$$
\left.\frac{\partial \beta}{\partial t_0}\right|_{\zeta=\zeta_0} = -2\left( \left.\frac{\partial v}{\partial \lambda}\right|_{\lambda=\beta} \right)^{-1} \cdot v_{-1} \beta^{-1}\,u(\beta) t_0 \; . 
$$
In the product on the right-hand side, all factors except for \,$u(\beta)$\, are known to be non-zero. Under the assumption \,$u(\beta)|_{\zeta_0} \neq 0$\,, 
this equation would show that by choosing \,$t_0\in \C^\times$\, with arbitrary phase, the value of \,$\beta$\, can change under the isospectral flow at \,$\zeta_0$\, in arbitrary directions in the complex plane, which contradicts the previous observation that \,$\tfrac{\partial\beta}{\partial t_0}$\, is a real multiple of \,$\mi\beta(\zeta_0)$\,. Therefore we have \,$u(\beta)|_{\zeta_0} = 0$\,. 

However, it is possible that \,$\lambda v(\lambda)$\, has at \,$\lambda=\beta$\, a root of order \,$d \geq 2$\,, and in this case we need a more refined argument. We now denote the global isospectral flow defined by \eqref{eq:cmc:infinitesimal-isospectral-action} by \,$\zeta(t_0,\dotsc,t_{g-1})$\, with \,$\zeta(0,\dotsc,0)=\zeta_0$\,. 
Under the assumption that \,$u(\beta)|_{\zeta_0} \neq 0$\, holds, we can again choose a tangent direction \,$\hat{t}=(\hat{t}_0,\dotsc,\hat{t}_{g-1}) \in \C^g$\, of the isospectral flow so that 
\begin{equation}
	\label{eq:cmc:magic-estimates-new:non-trivial}
	\left.\frac{\partial v}{\partial \hat{t}} \right|_{\zeta=\zeta_0, \lambda=\beta} \neq 0 \; . 
\end{equation}
We now define the integral curve
$$ \R \to I(a),\; s \mapsto \zeta(s) := \zeta(s^d\,\hat{t}_0,\dotsc,s^d\,\hat{t}_{g-1}) \, . $$
Note that \,$\zeta(0)=\zeta_0$\,. In the sequel, we will consider \,$\zeta$\, and its component functions as functions in \,$s$\,. We shall show that \,$\beta_0 := \beta(\zeta_0)$\, can be extended to a smooth function of roots of \,$\lambda\,v(\lambda)$\, along the integral curve \,$\zeta(s)$\,. 
The translated polynomial \,$(\lambda-\beta_0)\,v(\lambda-\beta_0)$\, can be decomposed in the form 
$$ (\lambda-\beta_0)\,v(\lambda-\beta_0) = P_s(\lambda) \cdot R_s(\lambda) \;, $$
where \,$P_s(\lambda)$\, and \,$R_s(\lambda)$\, are polynomials of order \,$d$\, and \,$(g+1)-d$\,, respectively, \,$P_{s=0}(\lambda)$\, has a zero of degree \,$d$\, at \,$\lambda=0$\,, and \,$R_{s=0}(0) \neq 0$\, holds. The decomposition is made unique by the additional stipulation that the leading coefficient of \,$P_s(\lambda)$\, is constant \,$1$\,. We then have \,$P_{s=0}(\lambda) = \lambda^d$\, and more generally
\begin{equation}
\label{eq:cmc:magic-estimates-new:PQ}
P_s(\lambda) = \lambda^d + s^d \cdot Q_s(\lambda)
\end{equation}
with a polynomial \,$Q_s(\lambda)$\, of degree \,$d-1$\,. Because of \eqref{eq:cmc:magic-estimates-new:non-trivial} we have \,$Q_s(0) \neq 0$\, for small \,$|s|$\,. 

We now blow up these polynomials by introducing the new variable \,$\tilde{\lambda} = s^{-1}\,\lambda$\, and defining new polynomials \,$\tilde{P}_s(\tilde{\lambda})$\, and \,$\tilde{Q}_s(\tilde{\lambda})$\, of degree \,$d$\, and \,$d-1$\, respectively by 
$$ s^d \, \tilde{P}_s(\tilde{\lambda}) = P_s(s\tilde{\lambda})
\quad\text{and}\quad 
\tilde{Q}_s(\tilde{\lambda}) = Q_s(s\tilde{\lambda}) \; . $$
From Equation~\eqref{eq:cmc:magic-estimates-new:PQ} we then obtain
$$ \tilde{P}_s(\tilde{\lambda}) = \tilde{\lambda}^d + \tilde{Q}_s(\tilde{\lambda}) \; .  $$
Because of \,$\tilde{Q}_s(0) = Q_s(0)\neq 0$\,, we see that for small values of \,$|s|$\,, the polynomial \,$\tilde{P}_s(\tilde{\lambda})$\, has \,$d$\, distinct zeros. In particular the location of these zeros depends smoothly on \,$s$\,. It follows that along the integral curve \,$\zeta(s)$\,, the \,$d$\, roots of \,$\lambda\,v(\lambda)$\, at \,$\beta_0$\, move into \,$d$\, distinct roots, and the initial directions of their motion is colinear to the \,$d$-th roots of \,$-Q_s(0)$\,. For \,$d \geq 3$\, there are at least three different directions, so this is an immediate contradiction to the observation that \,$\tfrac{\partial \beta}{\partial \hat{t}}|_{\zeta=\zeta_0}$\, is a real multiple of \,$\mi\beta_0$\,. For \,$d=2$\,, we can vary the phase of \,$Q_s(0)$\, and hence of the direction of motion of \,$\beta$\, by varying the phase of \,$\hat{t}$\,, which again produces a contradiction. 

To show the upper estimate in (2), we apply the analogous arguments to the continuous function \,$f: I(a)\to \R,\; \zeta \mapsto \max\{|\beta_k|\}$\, and the \,$\zeta_0 \in I(a)$\, where this \,$f$\, attains its maximum. 
\end{proof}

\section{Polynomial Killing fields and Symes' method for minimal surface immersions}
\label{Se:KdV}

We now develop the analogue of polynomial Killing fields and Symes' method for a different integrable system, which is associated to minimal surface immersions into \,$\R^3$\, locally around non-umbilical points. This integrable system belongs to Liouville's equation, and it will turn out that it is closely related to the integrable system of the complex KdV equation. 
As we will see in Section~\ref{Se:blowup1}, it will occur as the blow-up of a sequence of solutions of the sinh-Gordon equation under certain circumstances.

{\bf Minimal immersions into 3-space.}
Again let \,$f: X \to \R^3$\, with \,$X\subset \C$\, be a conformal surface immersion into \,$\R^3$\,. In relation to \,$f$\, we again use the notations introduced
at the beginning of Section~\ref{Se:cmc}, but now assume that \,$f$\, is minimal, meaning that its mean curvature \,$H$\, vanishes. We again consider \,$f$\, near a
non-umbilical point and assume that the coordinate \,$z$\, on \,$X$\, is chosen such that the function \,$Q$\, describing the Hopf differential \,$Q\,\diff z^2$\,
is constant and non-zero. In this setting the Codazzi equation~\eqref{eq:cmc:codazzi} again reduces to \,$0=0$\,, whereas the Gauss equation~\eqref{eq:cmc:gauss}
means that the negative \,$-\omega$\, of the conformal factor of \,$f$\, is a solution of Liouville's equation
\begin{equation}
  \label{eq:KdV:liouville}
  -\omega_{z\bar{z}} = -2\,|Q|^2\,e^{-\omega} \; .
\end{equation}
To define an extended frame for \,$f$\,, we again consider a family \,$\alpha_\lambda$\, of connection forms with respect to a \emph{spectral parameter} \,$\lambda\in \C$\,
(unlike in the cmc case, \,$\lambda=0$\, is now permitted):
\begin{equation}
  \label{eq:KdV:alpha}
  \alpha = \alpha_\lambda = \frac14 \begin{pmatrix} \omega_z & e^{\omega/2}\,\lambda \\ -4Q\,e^{-\omega/2} & -\omega_z \end{pmatrix} \diff z
  + \frac14 \begin{pmatrix} -\omega_{\bar z} & 4\bar{Q}\,e^{-\omega/2} \\ 0 & \omega_{\bar z} \end{pmatrix} \diff \bar{z}
\end{equation}
Note that in comparison to the cmc connection form \eqref{eq:cmc:alpha}, \,$\lambda^{-1}$\, has been replaced by \,$\lambda$\,. This has been done so that
\,$\alpha_\lambda$\, can be defined for \,$\lambda \in \C$\, (rather than \,$\lambda \in \C^{\times}  \cup \{\infty\}$\,) and also so that the Sym point (see below)
is at \,$\lambda_s = 0$\, (rather than \,$\lambda_s=\infty$\,, which would complicate the Sym-Bobenko formula). Again one checks that Liouville's equation
\eqref{eq:KdV:liouville} is equivalent to the Maurer-Cartan equation for \,$\alpha_\lambda$\,. Thus the initial value problem
$$ \diff F_\lambda = F_\lambda\,\alpha_\lambda \quad\text{with}\quad F_\lambda(z_0)=\one $$
has for every \,$\lambda \in \C$\, a unique solution \,$F_\lambda: X \to \mathrm{SL}(2,\C)$\,, called the \emph{extended frame} of \,$f$\,. Again, \,$F_\lambda$\,
depends holomorphically on \,$\lambda$\,, and due to \,$\alpha_{\lambda=0} \in \mathfrak{su}(2)$\,, \,$F_{\lambda=0}$\, maps into \,$\mathrm{SU}(2)$\,.
Because this reality condition holds only for a discrete subset of \,$\lambda \in \C$\,, there is no formula that is analogous to the right-hand side
equation of \eqref{eq:cmc:alpha-F-reality} in the present situation.

However there is a variant of the Sym-Bobenko formula in this setting, given in Proposition~\ref{P:KdV:sym-bobenko} below. As we will see in Section~\ref{Se:blowup1}, it also arises as the limit of a blow-up of the Sym-Bobenko formula \eqref{eq:cmc:sym-bobenko} for cmc surfaces. 
Both Sym-Bobenko formulae recover an immersion \,$f$\, as a scalar multiple of \,$G\,F^{-1}$\,, evaluated at a Sym point, where \,$F$\, is the extended frame and \,$G$\, is a solution of a partial differential equation of the form \,$\mathrm{d}G=G\alpha + F\beta$\,. In the cmc case we have \,$\beta = \tfrac{\partial \alpha}{\partial \lambda}$\,, and the choice of the phase \,$\varphi$\, of the Sym point \,$\lambda_s$\, corresponds to the choice of a member of the associated family of cmc surfaces. 
The aforementioned blow-up is effected by blowing up the potential \,$\zeta$\, and the spectral parameter \,$\lambda$\, about \,$\lambda=0$\, by rescaling with a positive real blow-up factor
in such a manner that the phase \,$\varphi$\, of the Sym point is preserved. 
After the replacement of the blown-up \,$\lambda$\, by its inverse as described above, the cmc Sym point \,$\lambda_s \in S^1$\, goes to \,$0$\, in the blow-up. In this process, the information of the phase \,$\varphi$\, of \,$\lambda_s$\, is obliterated. 

It turns out that the blow-up limit of \,$\tfrac{\partial \alpha}{\partial \lambda}$\, at the Sym point depends on the phase \,$\varphi$\,, in particular the blow-up limit does not commute with the action of the differential operator \,$\tfrac{\partial\ }{\partial \lambda}$\, on \,$\alpha$\,. This is the reason why in the Sym-Bobenko formula \eqref{eq:KdV:sym-bobenko} for minimal surfaces the Sym point is now fixed at \,$\lambda=0$\,, but there is an additional choice of a phase \,$\varphi$\, to be made, which now corresponds to the choice of a member of the associated family. This phenomenon also explains why in the minimal case, 
\,$\beta$\, \eqref{eq:KdV:sym-bobenko:beta} is not equal
to \,$\tfrac{\partial \alpha}{\partial \lambda}$\,. 
  
\begin{proposition}
  \label{P:KdV:sym-bobenko}
  Let a real-valued, smooth function \,$\omega$\, and a constant \,$Q \in \C^{\times} $\, be given so that these data satisfy Liouville's equation~\eqref{eq:KdV:liouville}.
  Moreover let \,$\varphi \in \R$\, and consider the \,$\mathfrak{sl}(2,\C)$-valued connection form
  \begin{equation}
  \label{eq:KdV:sym-bobenko:beta}
  \beta_\lambda = e^{\omega/2}\, \frac{\mi}{2}\begin{pmatrix} 0 & e^{-\mi\varphi}\,\diff z  \\ e^{\mi\varphi}\,\diff \bar{z} & 0 \end{pmatrix} 
  \end{equation}
  and a solution \,$G=G(z,\lambda)$\, of the inhomogeneous linear ordinary 
  differential equation \,$\diff G = G\alpha + F\beta$\, with \,$G(z_0,\lambda=0)\in \mathfrak{su}(2)$\,. 
  Then 
  \begin{equation}
  \label{eq:KdV:sym-bobenko}
  f = G\,F^{-1} \biggr|_{\lambda=0}
  \end{equation}
  is a minimal immersion \,$X \to \mathfrak{su}(2) \cong \R^3$\, with induced metric \,$e^\omega\,\diff z \,\diff \bar{z}$\, and
  Hopf differential \,$e^{-\mi\varphi} Q\,\diff z^2$\,. 
  The tangential directions \,$f_x, f_y$\, of \,$f$\, and the unit normal field \,$N$\, of \,$f$\, are again given by Equation~\eqref{eq:cmc:sym-bobenko:tangential}
  with \,$\lambda_s=e^{\mi\varphi}$\, and Equation~\eqref{eq:cmc:sym-bobenko:normal} respectively.
\end{proposition}

\begin{proof}
  The proof is similar to the proof of Proposition~\ref{P:cmc:sym-bobenko}. We again have
  $$ \diff f = \diff G \cdot F^{-1} - G \, F^{-1}\,\diff F \, F^{-1} = G\,\alpha\, F^{-1} + F\,\beta\, F^{-1} - G\, F^{-1}\,F\,\alpha\,F^{-1} = F\,\beta\,F^{-1} \; . $$
  For \,$\lambda=0$\, we have \,$F(z,\lambda=0)\in \mathrm{SU}(2)$\, and \,$\beta_{\lambda=0}\in\mathfrak{su}(2)$\,, hence \,$\diff f$\, maps into \,$\mathfrak{su}(2)$\,.
  Because we also have \,$f(z_0) = G(z_0,\lambda=0) \in \mathfrak{su}(2)$\,, it follows that \,$f$\, indeed maps into \,$\mathfrak{su}(2) \cong \R^3$\,.
  The statements on the tangential and normal directions of \,$f$\, also follow from the formula \,$\diff f = F\,\beta\,F^{-1}$\,. Moreover we have
  \begin{equation}
    \label{eq:KdV:sym-bobenko:fzfbarz}
    f_z = e^{\omega/2}\, e^{-\mi\varphi}\, F \tfrac{\mi}{2} \left( \begin{smallmatrix} 0 & 1 \\ 0 & 0 \end{smallmatrix} \right) F^{-1}
    \quad\text{and}\quad
    f_{\bar{z}} = e^{\omega/2}\, e^{\mi\varphi} F \tfrac{\mi}{2} \left( \begin{smallmatrix} 0 & 0 \\ 1 & 0 \end{smallmatrix} \right) F^{-1} \;,
  \end{equation}
  and therefore
  $$ 2\langle f_z,f_{\bar z} \rangle = 2e^{\omega} \cdot \left\langle \tfrac{\mi}{2} \left( \begin{smallmatrix} 0 & 1 \\ 0 & 0 \end{smallmatrix} \right),
  \tfrac{\mi}{2} \left( \begin{smallmatrix} 0 & 0 \\ 1 & 0 \end{smallmatrix} \right) \right\rangle = e^\omega \;, $$
  whence it follows that \,$f$\, is a conformal immersion with the induced metric \,$e^\omega\,\diff z \,\diff \bar{z}$\,. 
  We moreover obtain from Equations~\eqref{eq:KdV:sym-bobenko:fzfbarz} and \eqref{eq:KdV:alpha}:
  \begin{align*}
    f_{zz} & = \tfrac12 \omega_z\,f_z + [FUF^{-1}, f_z] = \omega_z\,f_z + e^{-\mi\varphi}\,Q\,N \\
    f_{z\bar{z}} & = \tfrac12 \omega_{\bar{z}}\,f_z + [FVF^{-1}, f_z] = 0 \\
    f_{\bar{z}\bar{z}} & = \tfrac12 \omega_{\bar{z}}\,f_{\bar{z}} + [FVF^{-1}, f_{\bar{z}}] = \omega_{\bar{z}}\,f_{\bar{z}} + e^{\mi\varphi}\,\bar{Q}\,N \; .
  \end{align*}
  Thus the Hopf differential of \,$f$\, is given by \,$\langle f_{zz},N \rangle \diff z^2 = e^{-\mi\varphi}\,Q\,\diff z^2$\, and the mean curvature of \,$f$\, vanishes.
\end{proof}

{\bf Polynomial Killing fields for minimal immersions.}
Analogously as for the cmc case, we consider polynomial Killing fields for the integrable system of minimal surface immersions of finite type \,$g\in \N$\,.
For this integrable system, the space of potentials is
$$ \mathcal{P}_g^{\Liouville} = \left\{ \left. \zeta_\lambda = \sum_{k=1}^{-g} \zeta_k\lambda^k \right| \zeta_k \in \mathfrak{sl}(2,\C), \; \zeta_{1} \in \left( \begin{smallmatrix} 0 & \R^+ \\ 0 & 0 \end{smallmatrix} \right), \mathrm{tr}(\zeta_{1}\,\zeta_0) \neq 0 \right\} \; . $$
By analogy with the situation for the sinh-Gordon integrable system, we call the members of \,$\mathcal{P}_g^{\Liouville}$\, \emph{Liouville potentials}.
However note that in contrast to that case, these polynomial Killing fields minus their ``initial term'' \,$\zeta_1 \lambda$\,
are polynomials in \,$\lambda^{-1}$\,, not in \,$\lambda$\,.
This corresponds to the substitution of \,$\lambda$\, with \,$\lambda^{-1}$\, in the expression for \,$\alpha_\lambda$\, in Equation~\eqref{eq:KdV:alpha}. 
As in the cmc case however, the dependence of the polynomial Killing field \,$\xi = \xi_{z_0}$\, for a solution \,$\omega$\, of Liouville's equation \eqref{eq:KdV:liouville}
on the base point \,$z_0$\, is described by the differential equation
\begin{equation}
  \label{eq:KdV:xi-dgl}
  \diff \xi + [\alpha_\lambda,\xi] = 0 \;,
\end{equation}
where \,$\alpha_\lambda$\, is now the connection form given by Equation~\eqref{eq:KdV:alpha}, and we impose the ``initial condition''
\,$v_1 = \tfrac14 e^{\omega/2}$\, for \,$\xi = \left( \begin{smallmatrix} 0 & v_1 \\ 0 & 0 \end{smallmatrix} \right) \lambda + \dotsc$\,.
The differential equation~\eqref{eq:KdV:xi-dgl} can be expressed as an iterative algorithm for the computation of the coefficients of the polynomial Killing field \,$\xi$\,
which is analogous to the Pinkall-Sterling iteration in the cmc case:

\begin{proposition}
  \label{P:KdV:ps}
  Let a solution \,$(\omega,Q)$\, of Liouville's equation~\eqref{eq:KdV:liouville} with a smooth real-valued function \,$\omega$\, and a constant \,$Q\in \C^{\times} $\,
  be given. We suppose that this solution is of finite type \,$g$\, and write the corresponding polynomial Killing field \,$\xi=\xi_z$\, in the form
  $$ \xi = \sum_{k=1}^{-g} \xi_k\,\lambda^k \quad\text{with}\quad \xi_k = \begin{pmatrix} u_k & \tau_k\,e^{\omega/2} \\ \sigma_k\,e^{\omega/2} & -u_k \end{pmatrix} \;, $$
  where \,$u_k, \tau_k, \sigma_k$\, are smooth, complex-valued functions in \,$z$\,. Then we have \,$\tau_{1} = \tfrac14$\,, \,$u_{1} = \sigma_{1} = 0$\,
  and for every \,$k= 0,\dotsc,-g$\,: 
  \begin{align}
    \label{eq:KdV:ps:ps-tau}
    \tau_{k,z} = -\tfrac{1}{2Q} (u_{k,zz} - \omega_{z} u_{k,z}) \qquad & \qquad \tau_{k,\bar{z}} = 2\bar{Q}\,e^{-\omega}\,u_k \\
    \label{eq:KdV:ps:ps-u}  
    u_{k-1} & = 2(\tau_{k,z}+\omega_{z}\,\tau_k) \\
    \label{eq:KdV:ps:ps-sigma}  
    \sigma_{k-1} & = -\tfrac{1}{\bar{Q}} u_{k-1,\bar{z}} \; . 
  \end{align}
  Moreover every \,$u_k$\, solves the linearisation of Liouville's equation~\eqref{eq:KdV:liouville}:
  \begin{equation}
    \label{eq:KdV:ps:uk-lin-liouville}
    u_{k,z\bar{z}} + 2|Q|^2 e^{-\omega} u_k = 0 \; .
  \end{equation}
\end{proposition}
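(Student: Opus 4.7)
The plan is to mirror the proof of Proposition~\ref{P:cmc:ps} in the KdV setting, exploiting the fact that the connection form \,$\alpha_\lambda$\, in \eqref{eq:KdV:alpha} now has only non-negative powers of \,$\lambda$\, and no \,$d\bar{z}$-term involving \,$\lambda$\,. First I write
$$ \alpha_\lambda = (U_0 + U_1\,\lambda)\,\diff z + V_0\,\diff \bar{z} $$
with
$$ U_0 = \tfrac14 \begin{pmatrix} \omega_z & 0 \\ -4Q e^{-\omega/2} & -\omega_z \end{pmatrix},\quad
U_1 = \tfrac14 \begin{pmatrix} 0 & e^{\omega/2} \\ 0 & 0 \end{pmatrix},\quad
V_0 = \tfrac14 \begin{pmatrix} -\omega_{\bar z} & 4\bar{Q} e^{-\omega/2} \\ 0 & \omega_{\bar z} \end{pmatrix}. $$
Decomposing \eqref{eq:KdV:xi-dgl} into its \,$\diff z$- and \,$\diff\bar{z}$-parts, and matching the coefficient of each power \,$\lambda^k$\, for \,$k\in\{-g,\dotsc,1\}$\,, gives
$$ \xi_{k,z} + [U_0,\xi_k] + [U_1,\xi_{k-1}] = 0 \AND \xi_{k,\bar{z}} + [V_0,\xi_k] = 0, $$
where we set \,$\xi_{-g-1}=0$\,.

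Expanding the commutators and separating entries, the \,$\diff\bar{z}$-equation yields immediately \,$u_{k,\bar{z}} = -\bar{Q}\sigma_k$\, (from the \,$(1,1)$-entry) and \,$\tau_{k,\bar{z}} = 2\bar{Q}\,e^{-\omega}\,u_k$\, (from the \,$(1,2)$-entry), while its \,$(2,1)$-entry reduces to \,$\sigma_{k,\bar{z}} + \omega_{\bar{z}}\sigma_k = 0$\,, which is consistent with the previous two via the linearised equation to be proved. The \,$\diff z$-equation yields (from \,$(1,2)$) the relation \,$u_{k-1} = 2(\tau_{k,z}+\omega_z\tau_k)$\,, which is \eqref{eq:KdV:ps:ps-u}; from \,$(2,1)$\, the auxiliary identity \,$\sigma_{k,z} = 2Q e^{-\omega}\,u_k$\,; and from \,$(1,1)$\, the identity \,$u_{k,z} + Q\tau_k + \tfrac14 e^\omega \sigma_{k-1} = 0$\,. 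The relation \eqref{eq:KdV:ps:ps-sigma} is obtained by applying \,$\sigma_k = -\tfrac{1}{\bar{Q}}u_{k,\bar{z}}$\, with \,$k$\, replaced by \,$k-1$\,. The initial-term statements \,$\tau_1 = \tfrac14$\,, \,$u_1=\sigma_1=0$\, are forced directly by the definition of \,$\mathcal{P}_g^{\KdV}$\,.

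The remaining non-trivial identity is the left-hand side of \eqref{eq:KdV:ps:ps-tau}. To obtain it, I would differentiate the \,$(1,1)$-equation \,$u_{k,z} = -Q\tau_k - \tfrac14 e^\omega \sigma_{k-1}$\, once more in \,$z$\,, then substitute \,$\sigma_{k-1,z} = 2Q e^{-\omega}u_{k-1}$\, (the auxiliary \,$(2,1)$-identity shifted by one) and \,$u_{k-1} = 2(\tau_{k,z}+\omega_z\tau_k)$\,. After combining with \,$\omega_z u_{k,z}$\, the \,$\omega_z\tau_k$ terms cancel and one is left with \,$u_{k,zz} - \omega_z u_{k,z} = -2Q\tau_{k,z}$\,, as desired.

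Finally, for the linearised Liouville equation \eqref{eq:KdV:ps:uk-lin-liouville} I would differentiate \,$u_{k,\bar{z}} = -\bar{Q}\sigma_k$\, in \,$z$\, and use the auxiliary identity \,$\sigma_{k,z} = 2Q e^{-\omega}u_k$\, to obtain \,$u_{k,z\bar{z}} = -2|Q|^2 e^{-\omega}u_k$\, directly, without ever invoking Liouville's equation itself. I expect the main pitfalls to be purely bookkeeping: the fact that the index shift in the \,$\diff z$-equation is \,$k-1$\, rather than the \,$k+1$\, of the cmc case (so that the KdV ``Killing field'' is polynomial in \,$\lambda^{-1}$\, plus a single top term \,$\xi_1\lambda$\,), and the absence of a \,$V_1$-term, which removes one of the equations that appeared in the proof of Proposition~\ref{P:cmc:ps}.
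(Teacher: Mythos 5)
Your proposal is correct and follows essentially the same route as the paper's proof: the same splitting $\alpha_\lambda=(U_0+U_1\lambda)\,\diff z+V_0\,\diff\bar z$, the same six entry-wise identities from the Lax equation, the same differentiation of the $(1,1)$-equation combined with $\sigma_{k-1,z}=2Qe^{-\omega}u_{k-1}$ to get the left half of \eqref{eq:KdV:ps:ps-tau}, and the same two expressions for $\sigma_{k,z}$ to get \eqref{eq:KdV:ps:uk-lin-liouville}. No gaps.
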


\begin{proof}
  The proof is analogous to the one of Proposition~\ref{P:cmc:ps}. We write
  $$ \alpha_\lambda = (U_0 + U_1\,\lambda)\mathrm{d}z + V_0\,\mathrm{d}\bar{z} $$
  with
  $$ U_0 = \frac14 \begin{pmatrix} \omega_{z} & 0 \\ -4Qe^{-\omega/2} & -\omega_z \end{pmatrix}\;,\quad
  U_1 = \frac14 \begin{pmatrix} 0 & e^{\omega/2} \\ 0 & 0 \end{pmatrix} \AND
  V_0 = \frac14 \begin{pmatrix} -\omega_{\bar{z}} & 4\bar{Q}e^{-\omega/2} \\ 0 & \omega_{\bar{z}} \end{pmatrix} \; . $$
  
We separate the differential equation \eqref{eq:cmc:xi-dgl} into its \,$\mathrm{d}z$-part and its \,$\mathrm{d}\bar{z}$-part,
and also into the individual powers of \,$\lambda$\, that occur.
In this way we obtain the equations
\begin{align*}
  \xi_{k,z} + [U_0,\xi_{k}] + [U_{1},\xi_{k-1}] & = 0 \\
  \xi_{k,\bar{z}} + [V_0,\xi_k] & = 0
\end{align*}
for all \,$k\in\{1,\dotsc,-g\}$\,. By evaluating the brackets and separating the entries of the matrices, we obtain the following equations:
\begin{align}
  \label{eq:KdV:ps:ps1-ukz}
  u_{k,z} + Q \tau_k + \tfrac14 e^{\omega}\sigma_{k-1} & = 0 \\
  \label{eq:KdV:ps:ps1-ukbz}  
  u_{k,\bar{z}} + \bar{Q} \sigma_k & = 0 \\
  \label{eq:KdV:ps:ps1-taukz}
  \tau_{k,z} + \omega_z\,\tau_k - \tfrac12 u_{k-1} & = 0 \\
  \label{eq:KdV:ps:ps1-taukbz}
  e^{\omega/2}\tau_{k,\bar{z}} - 2\bar{Q}e^{-\omega/2}u_k & = 0 \\
  \label{eq:KdV:ps:ps1-sigmakz}
  e^{\omega/2}\sigma_{k,z} - 2Qe^{-\omega/2}u_k & = 0 \\
  \label{eq:KdV:ps:ps1-sigmakbz}
  \sigma_{k,\bar{z}}+\omega_{\bar{z}} \sigma_k & = 0 \; .
\end{align}
The right-hand equation of \eqref{eq:KdV:ps:ps-tau} follows from Equation~\eqref{eq:KdV:ps:ps1-taukbz}, Equation~\eqref{eq:KdV:ps:ps-u} follows from
Equation~\eqref{eq:KdV:ps:ps1-taukz}, and Equation~\eqref{eq:KdV:ps:ps-sigma} follows from Equation~\eqref{eq:KdV:ps:ps1-ukbz}. 


By differentiating \eqref{eq:KdV:ps:ps1-ukbz} by \,$z$\,, and by \eqref{eq:KdV:ps:ps1-sigmakz}, we get two different expressions for \,$\sigma_{k,z}$\,:
$$ -\tfrac{1}{\bar{Q}} u_{k,z\bar{z}} = \sigma_{k,z} = 2Qe^{-\omega}\,u_k $$
and this equation implies \eqref{eq:KdV:ps:uk-lin-liouville}.

By differentiating Equation~\eqref{eq:KdV:ps:ps1-ukz} with respect to \,$z$\, we get
\begin{equation}
  \label{eq:KdV:ps:ps-taukz-pre}
  u_{k,zz} + Q \tau_{k,z} + \tfrac14 e^{\omega}(\sigma_{k-1,z}+\omega_{z}\,\sigma_{k-1}) = 0 \; .
\end{equation}
Equation~\eqref{eq:KdV:ps:ps1-ukz} implies
\begin{equation}
  \label{eq:KdV:ps:ps-taukz-pre1}
  \tfrac14 e^{\omega}\sigma_{k-1} = -u_{k,z}- Q \tau_k
\end{equation}
and Equations~\eqref{eq:KdV:ps:ps1-sigmakz} (for \,$k-1$\,) and \eqref{eq:KdV:ps:ps1-taukz} imply
\begin{equation}
  \label{eq:KdV:ps:ps-taukz-pre2}
  \sigma_{k-1,z} = 2Q e^{-\omega} u_{k-1} = 4Q e^{-\omega} (\tau_{k,z}+\omega_{z} \,\tau_k) \; .
\end{equation}
By inserting Equations~\eqref{eq:KdV:ps:ps-taukz-pre1} and \eqref{eq:KdV:ps:ps-taukz-pre2} into Equation~\eqref{eq:KdV:ps:ps-taukz-pre}, and solving for \,$\tau_{k,z}$\,,
we obtain the equation on the left-hand side of \eqref{eq:KdV:ps:ps-tau}.
\end{proof}

The iteration described in Proposition~\ref{P:KdV:ps} again permits us to write down explicitly the lowest terms of the polynomial Killing field \,$\xi$\, corresponding to
some solution \,$\omega$\, of Liouville's equation in terms of \,$\omega$\, and its derivatives. There is again an integration constant \,$C_k$\,
associated to every \,$\tau_k$\,, but now one starts with the largest value for \,$k$\, and then uses Equations~\eqref{eq:KdV:ps:ps-tau}--\eqref{eq:KdV:ps:ps-sigma}
to work downwards. In this way, one finds
\begin{equation}
  \label{eq:KdV:xi-terms}
  \xi = \begin{pmatrix} 0 & \tfrac14 e^{\omega/2} \\ 0 & 0 \end{pmatrix}\lambda
  + \begin{pmatrix} \tfrac12 \omega_z & -e^{\omega/2}(\tfrac{1}{4Q}(\omega_{zz}-\tfrac12 \omega_z^2) - C_0) \\ -Qe^{-\omega/2} & -\tfrac12 \omega_z \end{pmatrix}
  + \dotsc \; . 
\end{equation}
By comparison with \eqref{eq:KdV:alpha} we see that for the polynomial Killing field \,$\xi$\, corresponding to some solution \,$\omega$\, of
Liouville's equation \eqref{eq:KdV:liouville} we have \,$\alpha_\lambda = \alpha_\lambda^{\Liouville}(\xi)$\,, where \,$\alpha_\lambda$\, is defined by
Equation~\eqref{eq:KdV:alpha} with this \,$\omega$\,, and \,$\alpha_\lambda^{\Liouville}(\xi)$\, is defined by the linear map
\begin{equation}
  \label{eq:KdV:alpha-pkf}
  \mathcal{P}_g^{\Liouville} \to \Omega^1(\C) \otimes \mathfrak{sl}(2,\C), \; \zeta \mapsto \alpha^{\Liouville}_\lambda(\zeta) := \begin{pmatrix} \tfrac12 u_0 & v_{1}\,\lambda \\ w_0 & -\tfrac12 u_0 \end{pmatrix} \diff z
  - \begin{pmatrix} \tfrac12 \bar{u}_0 & \bar{w}_0 \\ 0 & -\tfrac12 \bar{u}_0 \end{pmatrix} \diff \bar{z} \; .
\end{equation}
Thus we see that similarly to the cmc situation, any polynomial Killing field \,$\xi=\xi_z$\, of degree \,$g$\, solves the
differential equation  
\begin{equation}
  \label{eq:KdV:xi-dgl-2}
  \diff \xi + [\alpha_\lambda^{\Liouville}(\xi),\xi] = 0 
\end{equation}
with regards to the dependence on the base point \,$z$\,.
For such a solution, say \,$\xi = \left( \begin{smallmatrix} 0 & v_1 \\ 0 & 0 \end{smallmatrix} \right)\lambda + \dotsc$\,, the corresponding solution 
of Liouville's equation~\eqref{eq:KdV:liouville} is the real-valued function \,$\omega$\, with \,$\tfrac14 e^{\omega/2} = v_1$\,. 

{\bf Reconstruction of the minimal immersion from the polynomial Killing field.}
It will turn out that by a variation of Symes' method, this kind of polynomial Killing field gives rise to extended frames of minimal surface immersions into \,$\R^3$\,,
from which the immersions themselves can be obtained by the variant of the Sym-Bobenko formula given in Proposition~\ref{P:KdV:sym-bobenko}.

In this process the Iwasawa decomposition is replaced by the following variant of the Birkhoff decomposition.
We consider the following loop subgroups of \,$\Lambda \mathrm{SL}(2,\C)$\, 
\begin{align*}
  \Lambda^+ \mathrm{SL}(2,\C) & = \{ \Phi \in \Lambda \mathrm{SL}(2,\C) \mid \text{\,$\Phi$\, extends analytically to \,$\C$\,}\} \\
  \lambda^{+,\mathrm{SU}(2)} \mathrm{SL}(2,\C) & = \{ \Phi \in \Lambda \mathrm{SL}(2,\C) \mid \Phi|_{\lambda=0} \in \mathrm{SU}(2) \} \\
  \Lambda^- \mathrm{SL}(2,\C) & = \{ \Phi \in \Lambda \mathrm{SL}(2,\C) \mid \text{\,$\Phi$\, extends analytically to \,$\mathbb{P}^1 \setminus \{0\}$\,}\} \\
  \Lambda^{-,\one} \mathrm{SL}(2,\C) & = \{ \Phi \in \Lambda^- \mathrm{SL}(2,\C) \mid \text{\,$\Phi|_{\lambda=\infty} = \one$\,}\} \\
  \Lambda^{-,\R} \mathrm{SL}(2,\C) & = \{ \Phi \in \Lambda^- \mathrm{SL}(2,\C) \mid \text{\,$\Phi|_{\lambda=\infty} = \left( \begin{smallmatrix} \rho & c \\ 0 & \rho^{-1} \end{smallmatrix} \right) $\, with some \,$\rho>0$\, and \,$c\in\C$\,}\} \; . 
\end{align*}

\begin{proposition}[Modified Birkhoff decomposition]
  \label{P:KdV:modified-birkhoff}
  The map
  $$ \Lambda^{+,\mathrm{SU}(2)} \mathrm{SL}(2,\C) \times \Lambda^{-,\R}\mathrm{SL}(2,\C) \to \Lambda \mathrm{SL}(2,\C),\; (F,B) \mapsto F\cdot B $$
  is a real analytic diffeomorphism onto an open and dense subset \,$\mathcal{U}$\, of \,$\Lambda \mathrm{SL}(2,\C)$\,,
  called the \emph{big cell} of \,$\Lambda \mathrm{SL}(2,\C)$\,.
\end{proposition}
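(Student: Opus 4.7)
The plan is to reduce this statement to the standard Birkhoff decomposition for \,$\Lambda \mathrm{SL}(2,\C)$\, combined with the classical Iwasawa (QR) decomposition of the finite-dimensional group \,$\mathrm{SL}(2,\C)$\,. Recall that the ``reverse'' Birkhoff theorem asserts that on an open dense subset \,$\mathcal{U}_0 \subset \Lambda \mathrm{SL}(2,\C)$\,, every loop \,$\Phi$\, admits a unique factorisation \,$\Phi = \tilde{F}\,\tilde{B}$\, with \,$\tilde{F} \in \Lambda^+ \mathrm{SL}(2,\C)$\, and \,$\tilde{B} \in \Lambda^{-,\one} \mathrm{SL}(2,\C)$\,, depending holomorphically on \,$\Phi$\,. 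The idea is to further normalise this decomposition by a constant upper-triangular matrix so that the value of the \,$+$-factor at \,$\lambda=0$\, becomes unitary.

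Given \,$\Phi \in \mathcal{U}_0$\,, I would first extract the Birkhoff factors \,$\tilde{F}, \tilde{B}$\,. Next, I apply the Iwasawa/QR decomposition to the matrix \,$A := \tilde{F}|_{\lambda=0} \in \mathrm{SL}(2,\C)$\, to write \,$A = U\,T$\, with \,$U \in \mathrm{SU}(2)$\, and \,$T$\, upper-triangular with positive real diagonal; this factorisation exists and is unique for every \,$A \in \mathrm{SL}(2,\C)$\, and depends real analytically on the entries. Setting
$$ F := \tilde{F}\,T^{-1} \AND B := T\,\tilde{B} \;, $$
with \,$T$\, regarded as a constant loop, one checks directly that \,$F|_{\lambda=0} = U \in \mathrm{SU}(2)$\,, so \,$F \in \Lambda^{+,\mathrm{SU}(2)} \mathrm{SL}(2,\C)$\,; that \,$B|_{\lambda=\infty} = T$\, is upper-triangular with positive diagonal, so \,$B \in \Lambda^{-,\R} \mathrm{SL}(2,\C)$\,; and that \,$F\,B = \tilde{F}\,\tilde{B} = \Phi$\,.

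For uniqueness, suppose \,$F_1\,B_1 = F_2\,B_2$\, are two such factorisations. Then \,$F_2^{-1}\,F_1 = B_2\,B_1^{-1}$\,; the left-hand side lies in \,$\Lambda^+ \mathrm{SL}(2,\C)$\, while the right-hand side lies in \,$\Lambda^- \mathrm{SL}(2,\C)$\,. A loop that extends analytically to all of \,$\mathbb{P}^1$\, is constant by Liouville's theorem, so both sides equal some constant \,$C \in \mathrm{SL}(2,\C)$\,. Evaluating the left-hand side at \,$\lambda=0$\, forces \,$C \in \mathrm{SU}(2)$\,, while evaluating the right-hand side at \,$\lambda=\infty$\, forces \,$C$\, to be upper-triangular with positive diagonal. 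The intersection of these two subsets of \,$\mathrm{SL}(2,\C)$\, is easily seen to be \,$\{\one\}$\,, so \,$F_1=F_2$\, and \,$B_1=B_2$\,.

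Real analyticity of the multiplication map \,$(F,B) \mapsto F\,B$\, is immediate, while real analyticity of its inverse follows by composing the holomorphic Birkhoff splitting with the real analytic QR step. Because the finite-dimensional QR decomposition is defined on all of \,$\mathrm{SL}(2,\C)$\,, the image \,$\mathcal{U}$\, coincides with the Birkhoff big cell \,$\mathcal{U}_0$\, and is therefore open and dense. The only nontrivial input is the standard Birkhoff decomposition, which must be invoked in the correct analytic category matching the paper's definition of \,$\Lambda \mathrm{SL}(2,\C)$\,; once this is pinned down the argument reduces to the constant-matrix normalisation above.
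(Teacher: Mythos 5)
Your proposal is correct and follows essentially the same route as the paper: apply the standard Birkhoff factorisation to land in the big cell, then renormalise by the finite-dimensional Iwasawa (QR) decomposition of the value of the plus-factor at \,$\lambda=0$\,, moving the upper-triangular constant into the minus-factor. You even supply the Liouville-type uniqueness argument explicitly, which the paper leaves implicit.
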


\begin{proof}
  The usual loop group Birkhoff decomposition \cite[Chapter 8]{PreS}
  shows the existence of the big cell \,$\mathcal{U}$\, of \,$\Lambda \mathrm{SL}(2,\C)$\,
  along with the complex analytic diffeomorphism
  $$ \Lambda^{+} \mathrm{SL}(2,\C) \times \Lambda^{-,\one}\mathrm{SL}(2,\C) \to \Lambda \mathrm{SL}(2,\C),\; (g_+,g_-) \mapsto g_+\cdot g_- $$
  onto \,$\mathcal{U}$\,. Now let \,$\Phi \in \mathcal{U} \subset \Lambda \mathrm{SL}(2,\C)$\, be given, and let \,$\Phi=g_+\cdot g_-$\, with
  \,$g_+ \in \Lambda^{+} \mathrm{SL}(2,\C)$\, and \,$g_-\in \Lambda^{-,\one}\mathrm{SL}(2,\C)$\, be the usual Birkhoff decomposition of \,$\Phi$\,.
  By the classical Iwasawa decomposition of the complex, semi-simple Lie group \,$\mathrm{SL}(2,\C)$\,, \,$g_+(0) \in \mathrm{SL}(2,\C)$\, can be decomposed as
  \,$g_+(0) = ub$\, with \,$u\in \mathrm{SU}(2)$\, and \,$b = \left( \begin{smallmatrix} \rho & c \\ 0 & \rho^{-1} \end{smallmatrix} \right)$\, where
  \,$\rho>0$\, and \,$c \in \C$\,. Here \,$u$\, and \,$b$\, depend real analytically on \,$g_+(0)$\,. We now define
  \,$F = g_+ \cdot b^{-1}$\, and \,$B = b\cdot g_-$\,. We then have
  $$ F \in \Lambda^+ \mathrm{SL}(2,\C) \quad\text{with}\quad F(0) = g_+(0) \cdot b^{-1} = u\in \mathrm{SU}(2)\;,\quad\text{so}\quad F \in \Lambda^{+,\mathrm{SU}(2)} \mathrm{SL}(2,\C) $$
  and
  $$ B \in \Lambda^- \mathrm{SL}(2,\C) \quad\text{with}\quad B(\infty) = b \cdot g_-(\infty) = b\;,\quad\text{so}\quad B \in \Lambda^{-,\R} \mathrm{SL}(2,\C) $$
  and
  $$ F \cdot B = g_+\,b^{-1}\,b\, g_- = g_+\,g_- = \Phi \; . $$
  This proves the proposition.
\end{proof}

\begin{proposition}
  \label{P:KdV:symes}
  Let a polynomial Killing field \,$\zeta \in \mathcal{P}_g^{\Liouville}$\, be given. We fix a base point \,$z_0$\, and restrict \,$z$\, to the open subset \,$\mathcal{U}_{\C}$\, of \,$\C$\, for which \,$\Phi_\lambda(z) = \exp\bigr( (z-z_0)\zeta \bigr)$\, is in the big cell of the modified Birkhoff decomposition (Proposition~\ref{P:KdV:modified-birkhoff}).
  Then we let \,$\Phi_\lambda(z) = F_\lambda(z)\cdot B_\lambda(z)$\,
  be the modified Birkhoff decomposition of \,$\Phi(z)$\,.
  In this situation we have \,$\diff F_\lambda = F_\lambda\,\alpha_\lambda$\,, and thus the formula
  \eqref{eq:KdV:sym-bobenko} with this \,$F_\lambda$\, gives a conformal minimal immersion \,$f$\, into \,$\R^3$\, with conformal metric \,$e^\omega\,\diff z \,\diff \bar{z}$\,.
  Here \,$\omega$\, is the real-valued function characterised by \,$\xi = \left( \begin{smallmatrix} 0 & \tfrac14 e^{\omega/2} \\ 0 & 0 \end{smallmatrix} \right)\lambda + \dotsc$\,
  for the unique solution \,$\xi: \mathcal{U}_{\C} \to \mathcal{P}_g^{\Liouville}$\, of Equation~\eqref{eq:KdV:xi-dgl-2} with \,$\xi_{z_0}=\zeta$\,, and \,$\alpha_\lambda$\, is defined
  by Equation~\eqref{eq:KdV:alpha} with respect to this \,$\omega$\,. 
\end{proposition}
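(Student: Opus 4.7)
The plan is to mimic the cmc proof in Proposition~\ref{P:cmc:symes}, adapted to the modified Birkhoff decomposition. First I will write \,$F_\lambda = \Phi_\lambda\,B_\lambda^{-1}$\, and differentiate. Because \,$\Phi_\lambda = \exp((z-z_0)\zeta)$\, commutes with \,$\zeta$\, and satisfies \,$\Phi_\lambda^{-1}\,\diff \Phi_\lambda = \zeta\,\diff z$\,, the same gauge calculation as in \eqref{eq:cmc:symes:F-regauge} yields
\begin{equation*}
 F_\lambda^{-1}\,\diff F_\lambda \;=\; B_\lambda\,\zeta\,B_\lambda^{-1}\,\diff z \;-\; \diff B_\lambda \cdot B_\lambda^{-1} \;=\; \xi_\lambda\,\diff z - \diff B_\lambda\cdot B_\lambda^{-1},
\end{equation*}
where the second equality uses that \,$B\zeta B^{-1} = F^{-1}\zeta F$\, is precisely the base-point evolution of the polynomial Killing field, so \,$\xi_z := B(z)\zeta B(z)^{-1}$\, is the unique solution of \eqref{eq:KdV:xi-dgl-2} with \,$\xi_{z_0}=\zeta$\,.

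Next I will constrain the \,$\lambda$-dependence. Unlike the cmc case, there is no reality condition on \,$F_\lambda$\, over all of \,$S^1$\,; instead the structure is dictated by the modified Birkhoff setting. Since \,$F \in \Lambda^{+,\mathrm{SU}(2)}\mathrm{SL}(2,\C)$\, extends holomorphically to all of \,$\C$\,, the 1-form \,$F^{-1}\,\diff F$\, is entire in \,$\lambda$\,; and since \,$B \in \Lambda^{-,\R}\mathrm{SL}(2,\C)$\, extends holomorphically to \,$\mathbb{P}^1\setminus\{0\}$\, with \,$B|_{\lambda=\infty} = b := \left( \begin{smallmatrix} \rho & c \\ 0 & \rho^{-1} \end{smallmatrix} \right)$\,, the 1-form \,$\diff B\cdot B^{-1}$\, has Laurent expansion at \,$\infty$\, involving only powers \,$\lambda^k$, \,$k\le 0$\,. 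The \,$\diff z$-part of \,$\xi$\, contributes the additional power \,$\lambda^1$\, through \,$\xi_1 = \left( \begin{smallmatrix} 0 & v_1 \\ 0 & 0 \end{smallmatrix} \right)$\,, while in the \,$\diff\bar{z}$-part \,$\xi$\, contributes nothing. Intersecting the two conditions forces
\begin{equation*}
 F^{-1}\,\diff F \;=\; \bigl( \xi_0 - b_z b^{-1} + \xi_1\,\lambda\bigr) \diff z \;-\; b_{\bar z}\,b^{-1}\,\diff \bar z.
\end{equation*}

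Finally I will use the \,$\mathrm{SU}(2)$-reality at \,$\lambda=0$\,. Because \,$F|_{\lambda=0}\in \mathrm{SU}(2)$\,, the 1-form \,$(F^{-1}\diff F)|_{\lambda=0}$\, is \,$\mathfrak{su}(2)$-valued, i.e.\ its \,$\diff\bar{z}$-coefficient equals the negative Hermitian adjoint of its \,$\diff z$-coefficient. Computing \,$b_z b^{-1}$\, and \,$b_{\bar z}\,b^{-1}$\, explicitly (using \,$\rho\in\R$\,) and matching entries yields exactly the three relations
\begin{equation*}
 u_0 = 2\rho^{-1}\rho_z,\qquad v_0 = \rho\,c_z - c\,\rho_z,\qquad \bar{w}_0 = \rho\,c_{\bar z} - c\,\rho_{\bar z},
\end{equation*}
mirroring the cmc calculation. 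Substituting these back reduces the \,$\diff z$-part to \,$\left( \begin{smallmatrix} \tfrac12 u_0 & v_1\lambda \\ w_0 & -\tfrac12 u_0 \end{smallmatrix} \right)$\, and the \,$\diff\bar{z}$-part to \,$-\left( \begin{smallmatrix} \tfrac12\bar u_0 & \bar w_0 \\ 0 & -\tfrac12\bar u_0 \end{smallmatrix} \right)$\,, which is exactly \,$\alpha_\lambda^{\KdV}(\xi)$\, from \eqref{eq:KdV:alpha-pkf}. Defining \,$\omega$\, by \,$v_1 = \tfrac14 e^{\omega/2}$\,, this agrees with \,$\alpha_\lambda$\, in \eqref{eq:KdV:alpha}, so \,$F_\lambda$\, is the extended frame for the data \,$(\omega,Q)$\,, and Proposition~\ref{P:KdV:sym-bobenko} produces the claimed conformal minimal immersion.

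The main obstacle is the second step: in the cmc situation the Laurent expansion of \,$F^{-1}\diff F$\, is truncated to \,$\lambda^{-1},\lambda^0,\lambda^1$\, by a global reality symmetry, but here that symmetry is lost, so the degree bound must be extracted purely from the extendability properties of the two Birkhoff factors. Verifying that the two one-sided holomorphicity conditions dovetail precisely to allow only the \,$\lambda^0$\, and \,$\lambda^1$\, pieces (and that the cancellations of negative powers of \,$\lambda$\, are automatically arranged by the Birkhoff factorisation) is the delicate point; once this is in hand the remaining matching of coefficients is a direct analogue of the cmc computation.
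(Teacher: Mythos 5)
Your proposal is correct and follows essentially the same route as the paper's proof: the gauge identity \,$F^{-1}\diff F = \xi\,\diff z - \diff B\cdot B^{-1}$\,, the two-sided degree bound from \,$F$\, being entire and \,$B$\, holomorphic at \,$\lambda=\infty$\,, and the \,$\mathfrak{su}(2)$-reality at \,$\lambda=0$\, to pin down the \,$\lambda^0$-coefficients, yielding \,$F^{-1}\diff F = \alpha_\lambda^{\KdV}(\xi)$\,. The step you flag as delicate is handled in the paper exactly as you describe, so no gap remains.
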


\begin{proof}
  As in the proof of Proposition~\ref{P:cmc:symes}, it suffices to show \,$F_\lambda^{-1}\,\diff F_\lambda = \alpha_\lambda^{\Liouville}(\xi)$\,.
  Due to the properties of the modified Birkhoff decomposition, \,$F_\lambda$\, extends holomorphically to \,$\lambda=0$\,, and therefore the Laurent series expansion
  of \,$F_\lambda^{-1}\,\diff F_\lambda$\, with respect to \,$\lambda$\, can only contain powers \,$\lambda^k$\, with \,$k \geq 0$\,. On the other hand, we again have the equation 
  \begin{equation}
    \label{eq:KdV:symes:F-regauge}
    F_\lambda^{-1}\,\diff F_\lambda = B_\lambda\,\zeta\,B_\lambda^{-1}\,\diff z - \diff B_\lambda \cdot B_\lambda^{-1} = \xi_\lambda\,\diff z - \diff B_\lambda \cdot B_\lambda^{-1} \; .
  \end{equation}
  Because \,$B_\lambda$\, extends holomorphically to \,$\lambda=\infty$\,, the series expansion of the 
  right hand side of Equation~\eqref{eq:KdV:symes:F-regauge} can only contain powers \,$\lambda^k$\, with \,$k\leq 1$\,. In summary this shows that the series expansion
  of \,$F_\lambda^{-1}\,\diff F_\lambda$\, contains only the powers \,$\lambda^1$\, and \,$\lambda^0$\,.
  We thus write
  $$ F_\lambda^{-1}\,\diff F_\lambda = (A_0'+A_1'\lambda)\diff z + (A_0''+A_1''\lambda)\diff \bar{z} \quad\text{with}\quad
  A_k', A_k'' \in \mathfrak{sl}(2,\C) \; . $$
  Because \,$F_{\lambda=0}$\, maps into \,$\mathrm{SU}(2)$\, we have \,$F_\lambda^{-1}\,\diff F_\lambda \bigr|_{\lambda=0} \in \Omega^1(\mathcal{U}_{\C}) \otimes \mathfrak{su}(2)$\,, and
  therefore we have
  \begin{equation}
    \label{eq:KdV:symes:A0-reality}
    A_0' = -\overline{A_0''}^t \; .
  \end{equation}
  We now write
  $$ B_{\lambda=\infty} = \left( \begin{smallmatrix} \rho & c \\ 0 & \rho^{-1} \end{smallmatrix} \right) \quad\text{and}\quad
  \zeta_\lambda = \sum_{k=1}^{-g} \zeta_{k}\lambda^k \quad\text{with}\quad \zeta_{1} = \left( \begin{smallmatrix} 0 & {v}_{1} \\ 0 & 0 \end{smallmatrix} \right)
  \;, \zeta_{k} = \left( \begin{smallmatrix} {u}_k & {v}_{k} \\ {w}_k & -{u}_k \end{smallmatrix} \right) $$
  with smooth functions \,$\rho: \mathcal{U}_{\C} \to \R_+$\, and \,$c,{u}_k,{v}_k,{w}_k: \mathcal{U}_{\C} \to \C$\,.
  It then follows from Equation~\eqref{eq:KdV:symes:F-regauge} that
  \begin{align}
    \label{eq:KdV:symes:A-1'}
    A_{1}' & = \zeta_{1} = \left( \begin{smallmatrix} 0 & v_1 \\ 0 & 0 \end{smallmatrix} \right) \;, \\
    \label{eq:KdV:symes:A-1''}    
    A_{1}'' & = 0 \;, \\
    \label{eq:KdV:symes:A0'}    
    A_0' & = \zeta_0 - (\tfrac{\diff\ }{\diff z} B_{\lambda=\infty})\,B_{\lambda=\infty}^{-1} = \left( \begin{smallmatrix} {u}_0-\rho^{-1}\,\rho_{z} & {v}_0-\rho\,c_{z}+c\,\rho_{z} \\ {w}_0 & -{u}_0+\rho^{-1}\,\rho_{z} \end{smallmatrix} \right) \; ,  \\
    \label{eq:KdV:symes:A0''}    
    A_0'' & = -(\tfrac{\diff\ }{\diff\bar{z}} B_{\lambda=\infty})\,B_{\lambda=\infty}^{-1} = \left( \begin{smallmatrix} -\rho^{-1}\,\rho_{\bar z} & -\rho\,c_{\bar z}+c\,\rho_{\bar z} \\ 0 & \rho^{-1}\,\rho_{\bar z} \end{smallmatrix} \right) \; . 
  \end{align}
  From Equations~\eqref{eq:KdV:symes:A0-reality} and \eqref{eq:KdV:symes:A0''} we see (using the fact that \,$\rho$\, is real-valued)
  $$ A_0' = \left( \begin{smallmatrix} \rho^{-1}\,\rho_z & 0 \\ \rho \,\bar{c}_z - \bar{c} \, \rho_z & -\rho^{-1}\,\rho_z \end{smallmatrix} \right) $$
  and by comparing this representation of \,$A_0'$\, with Equation~\eqref{eq:KdV:symes:A0'} we obtain
  $$ u_0 = 2\rho^{-1}\,\rho_z \;, \quad v_0 = \rho\,c_z - c\,\rho_z \AND w_0 = \rho\,\bar{c}_z - \bar{c}\,\rho_z \; . $$
  By inserting these equations into Equations~\eqref{eq:KdV:symes:A0'} and \eqref{eq:KdV:symes:A0''} we obtain
  $$ A_0' = \left( \begin{smallmatrix} \tfrac12 u_0 & 0 \\ w_0 & -\tfrac12 u_0 \end{smallmatrix} \right) \AND
  A_0'' = \left( \begin{smallmatrix} -\tfrac12 \bar{u}_0 & -\bar{w}_0 \\ 0 & \tfrac12 \bar{u}_0 \end{smallmatrix} \right) \; . $$
  These equations, together with Equations~\eqref{eq:KdV:symes:A-1'} and \eqref{eq:KdV:symes:A-1''} show the claimed statement
  \,$F_\lambda^{-1}\,\diff F_\lambda = \alpha_\lambda^{\Liouville}(\xi)$\,. 
\end{proof}

\begin{remark}
The integrable system of the Liouville equation that we described in this section is very closely related to the integrable system of the Korteweg-de-Vries (KdV) equation. Indeed the connection form \,$\alpha_\lambda$\, of Equation~\eqref{eq:KdV:alpha} can be transformed by a so-called \emph{gauge transformation} into the usual connection form for the integrable system of the KdV equation.

For a general linear, partial differential equation \,$\mathrm{d}F= F\alpha$\, on \,$z\in \mathbb{C}$\, given by some matrix-valued 1-form \,$\alpha$\,, a gauge is simply a matrix-valued function \,$G$\, on \,$z$\,. The associated gauge transformation is the transformation of the solution \,$F$\, of the differential equation to \,$\tilde{F}=FG$\,. We have
$$ \mathrm{d}\tilde{F} = \mathrm{d}FG = (\mathrm{d}F)G + F(\mathrm{d}G) = F\alpha G + F\mathrm{d}G = (FG)(G^{-1}\alpha G + G^{-1}(\mathrm{d}G)) \;, $$
hence \,$\tilde{F}$\, is a solution of the gauge-transformed differential equation
$$ \mathrm{d}\tilde{F} = \tilde{F}\tilde{\alpha} \quad\text{with}\quad 
\tilde{\alpha} = G.\alpha := G^{-1}\alpha G + G^{-1}(\mathrm{d}G) \; . $$

In our specific situation with \,$\alpha=\alpha_{\lambda}$\, being given by Equation~\eqref{eq:KdV:alpha}, where \,$\omega$\, is a solution of Liouville's equation \eqref{eq:KdV:liouville}, we seek a gauge \,$G$\, so that the \,$\mathrm{d}\bar{z}$-part of \,$G.\alpha$\, vanishes and the \,$\mathrm{d}z$-part corresponds to the Lax operator of the KdV equation. The Lax operator of the KdV equation is the 1-dimensional Schr\"odinger operator
\,$\tfrac{\mathrm{d}^2}{\mathrm{d}z^2} + f$\,, where the holomorphic function \,$f$\, is called the KdV potential. Rewriting the corresponding eigenfunction equation 
$$ \left(\frac{\mathrm{d}}{\mathrm{d}z}\right)^2 \varphi + f\,\varphi = \lambda\,\varphi $$
as a system of first order differential equations we get
$$ \frac{\mathrm{d}}{\mathrm{d}z} \begin{pmatrix} \varphi' \\ \varphi \end{pmatrix} =  \begin{pmatrix} 0 & \lambda-f \\ 1 & 0 \end{pmatrix} \begin{pmatrix} \varphi' \\ \varphi \end{pmatrix} \; . $$
Multiplying the entries of the right-hand matrix with constants corresponds simply to reparameterising the Schr\"odinger operator in \,$z$\,. 

Remarkably one can explicitly write down a gauge \,$G$\, in terms of \,$\omega$\, and \,$\omega_z$\, that has the properties we need. Indeed, we show that we can take
$$ G = \begin{pmatrix} e^{\omega/4} & -\tfrac{1}{2Q}\,\omega_z\,e^{\omega/4} \\ 0 & e^{-\omega/4} \end{pmatrix} \;. $$
We calculate 
$$ G^{-1} = \begin{pmatrix} e^{-\omega/4} & \tfrac{1}{2Q}\,\omega_z\,e^{\omega/4} \\ 0 & e^{\omega/4} \end{pmatrix} $$
and 
$$ \mathrm{d}G 
= \frac{1}{4}\begin{pmatrix} \omega_z\,e^{\omega/4} & -\tfrac{1}{2Q}(4\omega_{zz}+\omega_z^2)e^{\omega/4} \\ 0 & -\omega_z\,e^{-\omega/4} \end{pmatrix} \mathrm{d}z 
+ \frac14 \begin{pmatrix} \omega_{\bar z}\,e^{\omega/4} & -\tfrac{1}{2Q}(4\omega_{z\bar z}+\omega_z\,\omega_{\bar z})e^{\omega/4} \\ 0 & -\omega_{\bar z}\,e^{-\omega/4} \end{pmatrix} \mathrm{d}\bar{z} \;, $$
hence
$$ G^{-1}\partial_z G =  \frac14 \begin{pmatrix} \omega_z & -\frac{1}{Q}(2\omega_{zz} + \omega_z^2) \\ 0 & -\omega_z \end{pmatrix}$$
and 
$$ G^{-1}\partial_{\bar z} G
= 
\frac14 \begin{pmatrix} \omega_{\bar z} & -\frac{1}{Q}(2\omega_{z\bar{z}} + \omega_z\,\omega_{\bar z}) \\ 0 & -\omega_{\bar z} \end{pmatrix} 
= 
\frac14 \begin{pmatrix} \omega_{\bar z} & -4\bar{Q}e^{-\omega} - \tfrac{1}{Q} \omega_z\,\omega_{\bar z} \\ 0 & -\omega_{\bar z} \end{pmatrix}  
$$
(where the second equals sign follows from the fact that \,$-\omega$\, is a solution of Liouville's equation \eqref{eq:KdV:liouville}).
We now express the connection form \eqref{eq:KdV:alpha} as \,$\alpha = \alpha'\,\mathrm{d}z + \alpha''\,\mathrm{d}\bar{z}$\,, and continue to calculate
$$ G^{-1}\,\alpha'\,G = \frac14 \begin{pmatrix} -\omega_z & \lambda \\ -4Q & \omega_z \end{pmatrix}
\quad\text{and}\quad G^{-1}\,\,\alpha''\,G = \frac14 \begin{pmatrix} -\omega_{\bar z} & 4\bar{Q} e^{-\omega} + \tfrac{1}{Q}\omega_z\,\omega_{\bar z} \\ 0 & \omega_{\bar z} \end{pmatrix} \;, $$
therefore
$$ G.\alpha' = G^{-1}\,\alpha'\, G  + G^{-1}\,\partial_z G 
=  \frac14 \begin{pmatrix}  0 & \lambda - \tfrac{1}{Q}(\omega_z^2 + 2\omega_{zz}) \\ -4Q & 0 \end{pmatrix} $$
and
$$ G.\alpha'' = G^{-1}\,\alpha''\, G  + G^{-1}\,\partial_{\bar z} G = 0 \; . $$
This means the regauged \,$\alpha$\, is 
$$ G.\alpha = \frac14 \begin{pmatrix}  0 & \lambda - \tfrac{1}{Q}(\omega_z^2 + 2\omega_{zz}) \\ -4Q & 0 \end{pmatrix} \mathrm{d}z \;, $$
where the function \,$f=\omega_z^2 + 2\omega_{zz}$\, satisfies
$$ \partial_{\bar z}(\omega_z^2 + 2\omega_{zz}) = 2\,\omega_z\,\omega_{z\bar z} + 2(\omega_{z\bar z})_z 
= (2\omega_z)(2|Q|^2\,e^{-\omega}) + 2(2|Q|^2\,e^{-\omega})_z = 0  $$
and is therefore holomorphic. This shows that the regauged \,$\alpha$\, indeed corresponds to the 1-dimensional Schr\"odinger operator with the potential proportional to \,$f$\,. 
The gauge transformation \,$G$\, therefore transforms all data into holomorphic data. The spectral curve does not change under gauge transformation, therefore the spectral curves of corresponding solutions of the Liouville equation and the KdV equation are the same. 

We expect that each solution of the KdV equation corresponds to a real-1-dimensional family of real solutions of the Liouville equation. 
\end{remark}

\begin{example}
	\label{E:KdV:helicoid}
	See \cite[Section~3.5, Example~6]{doCarmo-engl}. 
	Aside from the flat plane, the helicoid is the only ruled minimal surface in \,$\R^3$\,. It has the conformal parameterisation
	\begin{equation}
		\label{eq:KdV:helicoid:f}
		f: \R^2 \to \R^3,\; (x,y) \mapsto \bigr( \sinh(x)\,\cos(y)\,,\, \sinh(x)\,\sin(y)\,,\,y\,\bigr) \; .
	\end{equation}
	Indeed, it is easy to check that
	$$ \langle f_x,f_x \rangle = \langle f_y,f_y \rangle = \cosh(x)^2 \quad\text{and}\quad \langle f_x,f_y \rangle = 0 $$
	holds. Therefore, writing \,$z=x+iy$\,, the metric \,$e^{\omega} \,\mathrm{d}z\,\mathrm{d}\bar{z}$\, of the helicoid is given by \,$e^{\omega} = 2\langle f_z,f_{\bar z}\rangle = \cosh(x)^2$\, and hence \,$\omega = 2\ln(\cosh(x))$\,. A further explicit computation shows that the Hopf differential
	of \,$f$\, is given by \,$\mi Q\,\diff z^2$\, with \,$Q=\tfrac12$\,. 
	The function \,$\omega$\, satisfies
	$$ \omega_z = \omega_{\bar{z}} = \tfrac12 \omega_x = \tanh(x) \quad\text{and}\quad \omega_{z\bar{z}} = \frac{1}{2\cosh(x)^2} = \tfrac12\,e^{-\omega} = \omega_{zz} \; . $$
	The latter equation proves explicitly that \,$\omega$\, is a solution of Liouville's equation \eqref{eq:KdV:liouville}. This implies in particular
	that the conformal immersion \,$f$\, is indeed minimal. The ruling lines of the helicoid are the images under \,$f$\, of the parallels of the \,$x$-axis in the \,$z$-space. 
	Note that the conformal metric \,$\omega$\, varies with \,$x$\, and therefore \,$f$\, does \emph{not} parameterise the ruling lines proportionally to arc length. These ruling lines also are one of the two families of asymptotic lines of \,$f$\,
	(i.e.~of null lines of the second fundamental form of \,$f$\,). Because the Hopf differential of \,$f$\, is a purely imaginary multiple of \,$\diff z^2$\,,
	the other family of asymptotic lines is the image under \,$f$\, of the parallels of the \,$y$-axis in \,$z$-space; they correspond to the helices on the helicoid \,$f$\,.
	
	Via the iteration in Proposition~\ref{P:KdV:ps} we can compute a polynomial Killing field for the helicoid.
	From Equation~\eqref{eq:KdV:xi-terms} we obtain:
	\begin{align*}
		v_1 & = \tfrac14 e^{\omega/2} = \tfrac14 \cosh(x) \\
		u_0 & = \tfrac12 \omega_z = \tfrac12 \tanh(x) \\
		w_0 & = -Qe^{-\omega/2} = -\tfrac12 \cosh(x)^{-1} \\
		v_0 & = -e^{\omega/2}\bigr( \tfrac{1}{4Q}(\omega_{zz}-\tfrac12 \omega_z^2)-C_0 \bigr) = -e^{\omega/2}(2e^{-\omega}-1-C_0) \; .
	\end{align*}
	By choosing the integration constant for \,$v_0$\, as \,$C_0=-1$\,, we obtain \,$v_0=-2e^{-\omega/2}=-2\cosh(x)^{-1}$\,.
	In Proposition~\ref{P:KdV:ps} we have \,$\tau_0=e^{-\omega/2}v_0=-2\cosh(x)^{-2} = -2e^{-\omega}$\, and therefore by Equation~\eqref{eq:KdV:ps:ps-u}
	$$ u_{-1} = 2(\tau_{0,z} + \omega_z\,\tau_0) = 2(2\omega_ze^{-\omega} + \omega_z \cdot (-2)e^{-\omega}) = 0 \; . $$
	By Equations~\eqref{eq:KdV:ps:ps-tau}--\eqref{eq:KdV:ps:ps-sigma} it follows that also all the other lower terms of the polynomial Killing field vanish
	(if one chooses \,$C_k=0$\, for the integration constants for \,$\tau_k$\, where \,$k\leq -1$\,). This shows that
	the helicoid has spectral genus \,$0$\,, and the corresponding polynomial Killing field is given by
	$$ \xi = \begin{pmatrix} 0 & \tfrac14 \cosh(x) \\ 0 & 0 \end{pmatrix} \lambda + \begin{pmatrix} \tfrac12\tanh(x) & -2\cosh(x)^{-1} \\ -\tfrac12 \cosh(x)^{-1} & -\tfrac12 \tanh(x) \end{pmatrix} \; . $$
	
	To use Symes' method and the Sym-Bobenko formula to recover the immersion \,$f$\, from this polynomial Killing field, we need to choose the proper value
	for the constant \,$\varphi$\, from Proposition~\ref{P:KdV:sym-bobenko}. This constant determines the phase of the Hopf differential of the constructed immersion,
	which equals \,$e^{-\mi\varphi}\,Q\,\diff z^2$\,. Because the Hopf differential of \,$f$\, is \,$\mi \,Q\,\diff z^2$\,, 
	one should choose \,$\varphi = -\tfrac{\pi}{2}$\, in Proposition~\ref{P:KdV:sym-bobenko} to recover \,$f$\,. 
	The KdV potentials that correspond to the spectral genus 0 solutions of the Liouville equation are simply the constant functions. Indeed, one can easily confirm by an explicit calculation that for the helicoid, \,$\omega_z^2 + 2\omega_{zz}=1$\, holds. 
\end{example}

\section{Blowing up cmc tori to minimal surfaces}
\label{Se:blowup1}

We now let a sequence \,$(f_n)_{n\in \mathbb{N}}$\, of smooth cmc torus immersions \,$f_n: \C \to \R^3$\, of fixed spectral genus \,$g<\infty$\, be given.
Without loss of generality we may suppose that the parameterisations
are chosen in such a way that the natural coordinate \,$z\in \C$\, is a conformal coordinate for all of them and that the Hopf differential of \,$f_n$\,
is given by \,$Q \diff z^2$\, with a fixed \,$Q \in \C^{\times} $\,. 
Furthermore we suppose that the \,$f_n$\, are scaled in the destination space \,$\R^3$\, such that the value of the mean curvature
of the \,$f_n$\, is some fixed \,$H>0$\,. 
We write the Riemannian metric on \,$\C$\, that is induced by \,$f_n$\, as \,$e^{\omega_n}\,\diff z\,\diff \bar{z}$\, with respect to some
smooth function \,$\omega_n: \mathbb{C}\to\mathbb{R}_+$\,. We choose a base point \,$z_0 \in \C$\, at first arbitrarily (below we will make
restrictions on this choice) and introduce the objects defined in Section~\ref{Se:cmc} for \,$f_n$\, with respect to this base point (where we attach the subscript
\,${}_n$\, to the corresponding symbols). In particular we consider the cmc potential \,$\zeta_{n} \in \mathcal{P}_g$\, for \,$f_n$\, at the base point \,$z_{0}$\,,
the polynomials \,$a_n(\lambda) = -\lambda\,\det(\zeta_n)$\, defining the corresponding spectral curve \,$\Sigma_n$\,,
the zeros \,$\lambda_{n,1},\dotsc,\lambda_{n,g}$\, of \,$a_n(\lambda)$\, with \,$|\lambda_{n,k}|<1$\,, 
and the points \,$(\beta_{n,1},\nu_{n,1}),\dotsc,(\beta_{n,g},\nu_{n,g})$\, in the support of the the spectral divisor defined by \,$\zeta_n$\,.
We will assume that the \,$\lambda_{n,k}$\, 
are ordered as in \eqref{eq:cmc:lambdak-ordering} 
for every \,$n\in \mathbb{N}$\,.

In the case where \,$|\lambda_{n,1}|$\, is bounded away from zero as \,$n\to \infty$\,, all \,$|\lambda_{n,k}|$\, are bounded away from zero by \eqref{eq:cmc:lambdak-ordering}
and all \,$|\beta_{n,k}|$\, are bounded and bounded away from zero by Proposition~\ref{P:cmc:magic-estimates-new}(2). It then follows from Proposition~\ref{P:cmc:xi-trace}
that all of the finitely many coefficients of \,$\zeta_n \in \mathcal{P}_g$\, are bounded as \,$n\to \infty$\,. Because the linear space \,$\mathcal{P}_g$\,
is finite-dimensional, after we pass to a subsequence, 
\,$\zeta_n$\, converges to a \,$\zeta_\infty  \in \mathcal{P}_g$\, as \,$n\to \infty$\,. This limiting polynomial Killing field is non-zero because
\,$-\lambda \det(\zeta_\infty) = \lim_{n\to\infty} a_n(\lambda)$\, is a polynomial of degree \,$2g$\, whose zeros inside the unit disk are exactly the \,$\lim_{n\to\infty} \lambda_{n,k}$\,. As a consequence, the associated connection form \,$\alpha_\lambda(\zeta_n)$\,,
which characterises the integrable system associated to \,$\zeta_n$\,, converges non-trivially to an \,$\alpha_\lambda(\zeta_\infty)$\, that is of the structure given by Equation~\eqref{eq:cmc:alpha}.
Therefore Symes' method applied to \,$\zeta_\infty$\, yields by the Sym-Bobenko formula \eqref{eq:cmc:sym-bobenko}
a cmc immersion \,$f_\infty$\, into \,$\R^3$\,. This immersion is the limit of the corresponding subsequence of the \,$f_n$\,.
This case is not relevant to us because we are interested in obtaining a solution of a \emph{different} integrable system by a limit process. So we will assume  in the sequel that \,$\lambda_{n,1} \to 0$\, as \,$n\to \infty$\,. 


To obtain convergence in the latter setting we first construct a blow-up of the coordinate \,$z$\, around the base point \,$z_{0}$\, and of the immersion \,$f_n$\, around the image of the base point \,$f_n(z_0)$\,. More explicitly, for sequences \,$(r_n)_{n\in \bbN}$\, and \,$(h_n)_{n\in \bbN}$\, of positive real numbers, we introduce a rescaled coordinate \,$\tilde{z}$\, on the domain \,$\C$\, and rescaled immersions \,$\tilde{f}_n$\, by
\begin{equation}
	z = z_{0}+r_n\,\tilde{z} \AND \tilde{f}_n(\tilde{z}) = h_n^{-1}\bigr( f_n(z_{0}+r_n\,\tilde{z})-f_n(z_{0})\bigr) \;. 
\end{equation}
A blown up spectral parameter \,$\tilde{\lambda}$\, and a blown up polynomial Killing field \,$\tilde{\zeta}_n$\, are obtained by choosing
more sequences \,$(\ell_n)_{n\in \bbN}$\, and \,$(s_n)_{n\in \bbN}$\, of positive real numbers
and setting
\begin{equation}
	\label{eq:blowup1:blowup-spectral}
	\lambda = \ell_n\,\tilde{\lambda} \AND \tilde{\zeta}_n(\tilde{\lambda}) = s_n\,\zeta_n(\ell_n\,\tilde{\lambda}) \; .
\end{equation}
We will see that the spectral data \,$\tilde{\lambda}$\, and \,$\tilde{\zeta}_n$\, correspond to the immersion \,$\tilde{f}_n(\tilde{z})$\, if we choose the sequences of positive real numbers according to Equations~\eqref{eq:blowup1:blowup1:factors} below.
Note that the relationship between the new coordinates \,$(\tilde{z},\tilde{\lambda})$\, and the original coordinates \,$(z,\lambda)$\, depends on \,$n$\,.
For the construction of the blow-up we will consider the new coordinates \,$(\tilde{z},\tilde{\lambda})$\, as being independent of \,$n$\,
and consequently let the corresponding values of \,$(z,\lambda)$\, vary with \,$n$\, (although we do not indicate this dependency with a subscript \,${}_n$\,
for the sake of notational sanity).
Also note that \,$\tilde{z}$\, is a conformal coordinate for \,$\tilde{f}_n$\,, that the induced Riemannian metric of \,$\tilde{f}_n$\, is given by
\,$e^{\tilde{\omega}_n(\tilde{z})}\,\diff \tilde{z}\,\diff \bar{\tilde{z}}$\, with the conformal factor \,$e^{\tilde{\omega}_n(\tilde{z})} = h_n^{-2}\,r_n^2\,e^{\omega_n(z_{0}+r_n\tilde{z})}$\,, and that
\,$\tilde{f}_n$\, has the constant mean curvature \,$h_n\,H$\, and the Hopf differential \,$h_n^{-1}\,r_n^2\,Q\,\diff \tilde{z}^2$\,. We also express the \,$\lambda_{n,k}$\,
and the \,$\beta_{n,k}$\, in the \,$\tilde{\lambda}$-coordinates by
$$ \tilde{\lambda}_{n,k} = \ell_n^{-1} \,\lambda_{n,k} \AND \tilde{\beta}_{n,k} = \ell_n^{-1} \,\beta_{n,k} \; . $$

Our first objective in the blow-up construction is to choose the sequences determining the blow-ups in such a way that both the blown-up
cmc potentials \,$\tilde{\zeta}_n$\, and the associated connection forms \,$\alpha(\tilde{\zeta}_n)$\, converge in the blown-up coordinates
to non-trivial quantities. 
Here we want to make our choice in such a way that \,$\tilde{\lambda}_{n,1}$\, (corresponding to the branch point of the blown up spectral curve with the shortest \,$\tilde{\lambda}$-coordinate) is bounded and bounded away from zero. This will give
us a KdV spectral curve and the situation described in Section~\ref{Se:KdV} for the blow-up. We will effect this by making the choice \,$\ell_n = |\lambda_{n,1}|$\,. 
We will then see that with the proper choice of \,$h_n$\, also the \,$\tilde{f}_n$\, converge to a surface immersion into \,$\R^3$\,
and that the limiting immersion is minimal.

We begin by exploring necessary conditions for the convergence of the blow-up we just described.

\begin{proposition}
\label{P:blowup1:necessary}
In the situation described above (in particular, \,$\ell_n = |\lambda_{n,1}|\to 0$\,) suppose that the sequence of blown-up cmc immersions converges locally uniformly to a non-umbilical, conformal surface immersion \,$\tilde{f}_\infty: \bbC \to \bbR^3$\,. Then the following holds:
\begin{enumerate}
	\item 
	Up to multiplication with sequences in \,$\bbR_+$\, that are bounded and bounded away from zero, we have
	$$ s_n= \ell_n^{1/2}\;,\quad r_n = \ell_n^{1/2} \quad\text{and}\quad h_n = \ell_n\; . $$
	\item
	\,$|\lambda_{n,1}|^{-1} \cdot e^{\omega_n(z_{0})}$\, is bounded and bounded away from zero.
	\item \,$\tilde{f}_\infty$\, is minimal.
\end{enumerate}
\end{proposition}

\begin{proof}
Because \,$\tilde{f}_n$\, converges locally uniformly to \,$\tilde{f}_\infty$\,, the conformal factors \,$\tilde{\omega}_n$\, and all their derivatives also converge locally uniformly. Because the coefficients in the Pinkall-Sterling iteration are differential polynomials in \,$\tilde{\omega}_n$\,, it follows that all coefficients of the potentials \,$\tilde{\zeta}_n$\, converge. Here the limits \,$\tilde{v}_{\infty}(\tilde{\lambda})$\, and \,$\tilde{w}_\infty(\tilde{\lambda})$\,
of the off-diagonal entries \,$\tilde{v}_n(\tilde{\lambda})$\, and \,$\tilde{w}_n(\tilde{\lambda})$\, of \,$\tilde{\zeta}_n(\tilde{\lambda})$\, cannot vanish identically because \,$\tilde{f}_\infty$\, is a non-umbilical immersion.
By Proposition~\ref{P:cmc:xi-trace} and \eqref{eq:blowup1:blowup-spectral} we have
\begin{align*}
	\tilde{v}_n(\tilde{\lambda}) & = s_n \cdot \tfrac12 H e^{\omega_n(z_{0})/2} \ell_n^{-1}\tilde{\lambda}^{-1}\cdot \prod_{k=1}^g(1-\tilde{\beta}_k^{-1}\,\tilde{\lambda}) \\
	\tilde{w}_n(\tilde{\lambda}) & = s_n \cdot (-Q)e^{-\omega_n(z_{0})/2}\cdot \prod_{k=1}^g (1-\ell_n^2 \bar{\tilde{\beta}}_k\tilde{\lambda}) \; . 
\end{align*}
It follows that the factors
$$ \tfrac12 H e^{\omega_n(z_{0})/2} s_n \ell_n^{-1} \quad\text{and}\quad -Qe^{-\omega_n(z_{0})/2} s_n $$
are bounded and bounded away from zero. Therefore the sequences \,$s_n^2 \cdot \ell_n^{-1}$\, and \,$e^{\omega_n(z_{0})}\cdot \ell_n^{-1}$\, are also bounded and bounded away from zero. Because \,$s_n^2 \cdot \ell_n^{-1}$\, is bounded and bounded away from zero, we have \,$s_n = \ell_n^{1/2}$\, (up to a sequence bounded and bounded away from zero, which we will assume without loss of generality to be equal to \,$1$\, in the sequel). The fact that \,$e^{\omega_n(z_{0})}\cdot \ell_n^{-1}$\, is bounded and bounded away from zero implies (2).

The Riemannian metric, mean curvature and Hopf differential of \,$\tilde{f}_n$\, are given by
$$ e^{\tilde{\omega}_n(0)}\,\mathrm{d}\tilde{z} \,\mathrm{d}\bar{\tilde{z}} = h_n^{-2}\,r_n^2\,e^{\omega_n(z_{0})}
\,\mathrm{d}\tilde{z} \, \mathrm{d}\bar{\tilde{z}} \;,\quad
\tilde{H}_n=h_n\,H \quad\text{and}\quad
\tilde{Q}_n\,\mathrm{d}\tilde{z}^2 =h_n^{-1} r_n^2\,Q\,\mathrm{d}\tilde{z}^2 $$
respectively. Because these quantities converge to the corresponding quantities of the non-umbilical immersion \,$\tilde{f}_\infty$\,, it follows that the sequences \,$h_n^{-2} \, r_n^2 \, e^{\omega_n(z_{0})}$\, and \,$h_n^{-1}\,r_n^2$\, are bounded and bounded away from zero. Therefore the product and the quotient of these two sequences, \,$h_n^{-3}\,r_n^4\,e^{\omega_n(z_{0})}$\, and \,$h_n^{-1}\,e^{\omega_n(z_{0})}$\,, are also bounded and bounded away from zero. Because \,$e^{\omega_n(z_{0})}\cdot \ell_n^{-1}$\, is bounded and bounded away from zero, this implies \,$h_n=\ell_n$\, and \,$r_n=\ell_n^{1/2}$\, up to multiplication with a sequence that is bounded and bounded away from zero, which completes the proof of (1). Finally we note that \,$\tilde{H}_n = h_n\,H \to 0$\,, showing that \,$\tilde{f}_\infty$\, is minimal. 
%
\end{proof}

\begin{theorem}
  \label{T:blowup1:blowup1}
  In the situation described above, suppose that the following conditions are satisfied (see Remark~\ref{R:blowup:motivation} below for the motivation):
  \begin{itemize}
  \item[(a)]
    \,$\lim_{n\to\infty} \lambda_{n,1} = 0$\,.
  \item[(b)]
    \,$|\lambda_{n,1}|^{-1}\cdot e^{\omega_n(z_{0})}$\, is bounded and bounded away from zero.
  \item[(c)]
For all \,$k\neq k'$\,, the sequences \,$|\lambda_{n,1}|^{-1}\cdot(\beta_{n,k}-\beta_{n,k'})$\, and \,$|\lambda_{n,1}|^{-1}\cdot({\beta}_{n,k}^{-1}-{\beta}_{n,k'}^{-1})$\, are bounded away from zero. 
  \end{itemize}
Choose
  \begin{equation}
    \label{eq:blowup1:blowup1:factors}
    \ell_n = |\lambda_{n,1}| \;,\quad r_n = s_n = \ell_n^{1/2} \AND h_n=\ell_n \; .
  \end{equation}
  
  Then there exists a subsequence of the \,$(f_n)$\,, again denoted by \,$(f_n)$\,, so that the rescaled potentials
  defined in \eqref{eq:blowup1:blowup-spectral} converge to some non-zero \,$\tilde{\zeta}_\infty(\tilde{\lambda})$\,.
  With respect to this subsequence, all objects associated to the blow-up converge to the corresponding objects of a minimal immersion \,$\tilde{f}_\infty: \mathbb{C} \to \mathbb{R}^3$\,. 
  More specifically, the following holds: 
  \begin{enumerate}
  \item
    \textbf{The limiting divisor has the correct degree.}
    There exists
    \,$1 \leq \tilde{d} \leq g$\, so that \,$\tilde{\lambda}_{n,k}$\, converges to some \,$\tilde{\lambda}_{\infty,k} \in \C \setminus \mathbb{D}$\, for \,$k\leq \tilde{d}$\,
    and \,$\tilde{\lambda}_{n,k} \to \infty$\, for \,$k > \tilde{d}$\,. The spectral curve \,$\tilde{\Sigma}_\infty$\, corresponding to \,$\tilde{\zeta}_\infty$\,
    has geometric genus \,$\tilde{g} = \lfloor \tfrac12 \tilde{d} \rfloor$\,. Among the \,$k \in \{1,\dotsc,g\}$\, there are exactly \,$\tilde{g}$-many so that
    \,$\tilde{\beta}_{n,k}$\, converges to some \,$\tilde{\beta}_{\infty,k} \in \C \setminus \mathbb{D}$\,, whereas for the other values of \,$k$\,
    we have \,$\tilde{\beta}_{n,k}\to \infty$\,. 
  \item
	\textbf{The limiting potential is of the type of the Liouville integrable system.}
    Let \,$\lambda^{\Liouville} = \tilde{\lambda}^{-1}$\, and
    \,$\zeta^{\Liouville}_\infty(\lambda^{\Liouville}) = \tilde{\zeta}_\infty((\lambda^{\Liouville})^{-1})$\,, then \,$\zeta^{\Liouville}_\infty\in \mathcal{P}_{\tilde{d}}^{\Liouville}$\, holds.

    Moreover write
    $$ \alpha_{\ell_n\tilde{\lambda}}(\zeta_n) = \tilde{U}_n\diff \tilde{z} + \tilde{V}_n\diff \bar{\tilde{z}} \AND
    \alpha^{\Liouville}_{\lambda^{\Liouville}}(\zeta_\infty^{\Liouville}) = U^{\Liouville}_\infty\,\diff \tilde{z} + V^{\Liouville}_\infty\,\diff \bar{\tilde{z}} \; . $$
    Then \,$\tilde{U}_n$\, and \,$\tilde{V}_n$\, converge for \,$n\to\infty$\, to \,$U^{\Liouville}_\infty$\, and \,$V^{\Liouville}_\infty$\,, respectively. 

  \item
  \textbf{The blown-up extended frames and polynomial Killing fields converge.} 
    Let \,$F_n(z,\lambda)$\, be the extended frame corresponding to \,$f_n$\, and let
    \,$\tilde{F}_n(\tilde{z},\tilde{\lambda}) = F_n(z_{0}+r_n\tilde{z},\ell_n \tilde{\lambda})$\,. Then \,$\tilde{F}_n$\, converges for \,$n\to\infty$\, locally uniformly in \,$\tilde{z}\in \C$\,
    and \,$\tilde{\lambda}\in \C^{\times}  \cup \{\infty\}$\, to some \,$\tilde{F}_\infty(\tilde{z},\tilde{\lambda})$\,. \,$\tilde{F}_\infty$\, depends holomorphically on \,$\tilde{\lambda}$\, and
    \,$\tilde{F}_\infty(\tilde{z},\tilde{\lambda}=\infty) \in \mathrm{SU}(2)$\, holds.

    Let \,$F^{\Liouville}_\infty(\tilde{z},\lambda^{\Liouville}) = \tilde{F}_\infty(\tilde{z},(\lambda^{\Liouville})^{-1})$\, and let \,$\xi^{\Liouville}_\infty: \tilde{z}\ni \C \to \mathcal{P}_g^{\Liouville}$\, be the
    solution of \eqref{eq:KdV:xi-dgl-2} with \,$\xi^{\Liouville}_{\infty,\tilde{z}=0} = \zeta^{\Liouville}_\infty$\,. Then
    \,$\diff F_\infty^{\Liouville} = F_\infty^{\Liouville}\,\alpha^{\Liouville}(\xi^{\Liouville}_\infty)$\, holds.

  \item
  \textbf{The blown-up conformal factors converge.}
    The functions \,$\tilde{\omega}_n$\, converge locally uniformly to a real-valued, smooth function \,$\tilde{\omega}_\infty$\,. 
    
  \item
  \textbf{The blown-up immersions converge to a minimal immersion.}
    The sequence of blown-up cmc immersions \,$\tilde{f}_n$\,
    converges locally uniformly to the conformal minimal surface immersion \,$\tilde{f}_\infty: \C \to \R^3$\, defined by Equation~\eqref{P:KdV:sym-bobenko}
    with the extended frame \,$F=F^{\Liouville}_\infty$\,. The immersion \,$\tilde{f}_\infty$\, induces the metric \,$e^{\tilde{\omega}_\infty}\,\diff \tilde{z}\,\diff\bar{\tilde{z}}$\,,
    and has the Hopf differential \,$Q \,\diff \tilde{z}^2$\,. 
  \end{enumerate}
\end{theorem}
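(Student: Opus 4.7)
My strategy is to push the trace formulas of Proposition~\ref{P:cmc:xi-trace} and the magic estimates of Proposition~\ref{P:cmc:magic-estimates} through the rescalings~\eqref{eq:blowup1:blowup-coord}--\eqref{eq:blowup1:blowup-data}, extracting a subsequential limit polynomial Killing field; then feed that limit into the KdV Symes' method (Proposition~\ref{P:KdV:symes}) for frame convergence; and finally recover the blown-up immersion from the cmc Sym--Bobenko formula via a Taylor expansion near the Sym point.

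For claim~(1), since $\ell_n=|\lambda_{n,1}|$ and the $|\lambda_{n,k}|$ are weakly increasing and all $<1$, we have $|\tilde\lambda_{n,1}|=1$ and $|\tilde\lambda_{n,k}|\geq 1$; compactness yields subsequential limits in $(\C\setminus\bbD)\cup\{\infty\}$ and reordering defines $\tilde d$ as the number of finite limits. Proposition~\ref{P:cmc:magic-estimates}(1) together with condition~(b) then gives the analogous dichotomy for the $\tilde\beta_{n,k}$ after a further subsequence, and the genus formula $\tilde g=\lfloor \tilde d/2 \rfloor$ follows by counting branch points of the resulting KdV spectral curve, with $\lambda^{\KdV}=0$ contributing when parity requires. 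For claim~(2), substitute the rescalings into \eqref{eq:cmc:xi-trace:u}--\eqref{eq:cmc:xi-trace:w}: condition~(c) makes $\ell_n^{-1/2}e^{\omega_n(z_{n,0})/2}$ and $\ell_n^{1/2}e^{-\omega_n(z_{n,0})/2}$ bounded and bounded away from $0$; condition~(b) keeps the Vandermonde denominators inside $\chi_k$ uniformly non-degenerate; and the magic estimate gives $|\ell_n\bar\beta_{n,k}|\leq 1$. A further subsequence making each $\ell_n\bar\beta_{n,k}$ convergent yields $\tilde\zeta_n\to\tilde\zeta_\infty$ as a Laurent polynomial in $\tilde\lambda$ whose reparametrisation $\lambda^{\KdV}=\tilde\lambda^{-1}$ lies in $\mathcal{P}^{\KdV}_{\tilde d}$; the coefficients at $\tilde\lambda^j$ for $j>\tilde d$ vanish in the limit because the rescaling factor $s_n\ell_n^j=\ell_n^{j+1/2}$ overwhelms the polynomial growth of the trace formulas. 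Convergence of the connection forms is then immediate from comparing \eqref{eq:cmc:alpha-pkf} with \eqref{eq:KdV:alpha-pkf} under $\lambda=\ell_n\tilde\lambda$ and $\diff z = r_n\,\diff\tilde z$; the choice $r_n=\ell_n^{1/2}$ is exactly what balances the $\lambda^{-1}\diff z$-term of $\alpha$ against the $\lambda^{\KdV}\diff\tilde z$-term of $\alpha^{\KdV}$ in the limit.

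For claim~(3), the rescaled frames $\tilde F_n$ satisfy $\diff\tilde F_n=\tilde F_n(\tilde U_n\,\diff\tilde z+\tilde V_n\,\diff\bar{\tilde z})$ with $\tilde F_n(0,\tilde\lambda)=\one$, and the coefficients converge locally uniformly on $\C\times(\C^*\cup\{\infty\})$ by~(2); standard continuous dependence of ODE solutions on coefficients and initial data then delivers a limit $\tilde F_\infty$ holomorphic in $\tilde\lambda$. The cmc reality condition~\eqref{eq:cmc:alpha-F-reality} has its fixed circle $|\lambda|=1$ collapsing to $|\tilde\lambda|=\ell_n^{-1}\to\infty$, and passing to the limit yields $\tilde F_\infty(\cdot,\infty)\in\mathrm{SU}(2)$; the identification $\diff F^{\KdV}_\infty=F^{\KdV}_\infty\,\alpha^{\KdV}(\xi^{\KdV}_\infty)$ then follows by reparametrising the limit ODE and matching with the form of $\alpha^{\KdV}$ established in~(2).

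For claims~(4) and (5), together with the main obstacle: locally uniform convergence of $\tilde\xi_n(\tilde z)$ as a function of the base point yields $\tilde\omega_n\to\tilde\omega_\infty$ from its $\tilde\lambda^{-1}$ coefficient, giving~(4). For~(5), rewrite the cmc Sym--Bobenko formula~\eqref{eq:cmc:sym-bobenko} with $\lambda_s=1$ for the translated immersion $f_n-f_n(z_{n,0})$ in blown-up variables; using $\lambda\partial_\lambda=\tilde\lambda\partial_{\tilde\lambda}=-\lambda^{\KdV}\partial_{\lambda^{\KdV}}$ and the evaluation point $\lambda=1$ mapping to $\lambda^{\KdV}=\ell_n\to 0$, one finds
\begin{equation*}
\tilde f_n(\tilde z) = \tfrac{\mi}{H}\Bigl[\tfrac{\partial F^{\KdV}_n}{\partial\lambda^{\KdV}}(F^{\KdV}_n)^{-1}\Bigr|_{(\tilde z,\ell_n)} - \tfrac{\partial F^{\KdV}_n}{\partial\lambda^{\KdV}}(F^{\KdV}_n)^{-1}\Bigr|_{(0,\ell_n)}\Bigr],
\end{equation*}
in which the $1/\ell_n$ blow-up cancels exactly against the $\lambda^{\KdV}$ in $\lambda^{\KdV}\partial_{\lambda^{\KdV}}$. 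Passing to the limit, $G_\infty(\tilde z):=\partial_{\lambda^{\KdV}}F^{\KdV}_\infty|_{\lambda^{\KdV}=0}$ solves $\diff G_\infty=G_\infty\alpha^{\KdV}+F^{\KdV}_\infty\beta^{\KdV}_0$ with $\beta^{\KdV}_0=\partial_{\lambda^{\KdV}}\alpha^{\KdV}|_0$. \textbf{The main obstacle} is identifying this limit with the minimal Sym--Bobenko formula of Proposition~\ref{P:KdV:sym-bobenko}: one must verify that the Hopf differential is $Q\,\diff\tilde z^2$ (matching $\varphi=0$), track the scalings of $H$ and the several powers of $\ell_n$ through the expansion to see that the induced metric is exactly $e^{\tilde\omega_\infty}\,\diff\tilde z\,\diff\bar{\tilde z}$, and show that the base-point subtraction supplies the reality condition $G_\infty(0)\in\mathfrak{su}(2)$ required by Proposition~\ref{P:KdV:sym-bobenko}. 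Once these identifications are made, $\tilde f_\infty$ is recognised as the minimal immersion produced by the KdV Sym--Bobenko formula, and claim~(5) follows.
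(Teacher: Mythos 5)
Your treatment of (1), (2) and (4) follows the paper's route (trace formulas of Proposition~\ref{P:cmc:xi-trace}, the estimates of Proposition~\ref{P:cmc:magic-estimates}, subsequence extraction); the paper additionally passes through the eigenline-bundle characterisation of \,$\tilde{\zeta}_\infty$\, to get convergence of the full matrix, but your direct computation of the coefficients is an acceptable substitute. The two places where you genuinely diverge from the paper both contain gaps. First, in (3): you write that \,$\tilde{F}_n$\, solves \,$\diff \tilde{F}_n = \tilde{F}_n(\tilde{U}_n\diff\tilde{z}+\tilde{V}_n\diff\bar{\tilde{z}})$\, ``with coefficients converging locally uniformly by (2)''. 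But the \,$\tilde{U}_n,\tilde{V}_n$\, of claim (2) are built from \,$\zeta_n=\xi_{n,z_{n,0}}$\,, i.e.\ they are the connection coefficients \emph{at the base point only}; the frame equation has coefficients \,$\alpha_{\ell_n\tilde\lambda}(\tilde\xi_n(\tilde z))$\, evaluated along the moving polynomial Killing field, and locally uniform convergence of \,$\tilde\xi_n(\tilde z)$\, in \,$\tilde z$\, is exactly what the paper only obtains \emph{after} the frame convergence, via \,$\tilde\xi_n=\tilde F_n^{-1}\tilde\zeta_n\tilde F_n$\,. Your ODE argument is therefore circular as stated. The paper sidesteps this entirely by Symes' method: \,$\tilde\Phi_n=\exp(\tilde z\,\tilde\zeta_n)$\, converges because \,$\tilde\zeta_n$\, does, and the \,$(\ell_n^{-1})$-Iwasawa decomposition of \,$\tilde\Phi_n$\, converges to the modified Birkhoff decomposition of \,$\Phi^{\KdV}_\infty$\,. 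Your route could be repaired (the magic estimates give \,$\tilde z$-uniform bounds on \,$\tilde\xi_n$\,, allowing a coupled continuous-dependence argument), but as written the key hypothesis is unjustified.

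Second, and more seriously, in (5) the exchange of limits you perform is invalid and leads to the wrong object. For each finite \,$n$\,, \,$F^{\KdV}_n(\tilde z,\lambda^{\KdV})=F_n(z_{n,0}+r_n\tilde z,\ell_n/\lambda^{\KdV})$\, has an essential singularity at \,$\lambda^{\KdV}=0$\,, so \,$\lim_n \partial_{\lambda^{\KdV}}F^{\KdV}_n\bigr|_{\lambda^{\KdV}=\ell_n}$\, is \emph{not} \,$\partial_{\lambda^{\KdV}}F^{\KdV}_\infty\bigr|_{\lambda^{\KdV}=0}$\,. Concretely, the \,$\diff\bar{\tilde z}$-part of \,$\tilde V_n$\, contains the term \,$-\ell_n^2\,\bar{\tilde v}_{n,-1}\,\tilde\lambda = -\ell_n^2\,\bar{\tilde v}_{n,-1}/\lambda^{\KdV}$\,; this term vanishes in the limit connection form \,$\alpha^{\KdV}$\,, yet its \,$\lambda^{\KdV}$-derivative evaluated at \,$\lambda^{\KdV}=\ell_n$\, equals \,$\bar{\tilde v}_{n,-1}=O(1)$\, and survives. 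This surviving contribution is precisely the \,$e^{\mi\varphi}\diff\bar{\tilde z}$-half of the \,$\beta$\, required by Proposition~\ref{P:KdV:sym-bobenko}. Your \,$G_\infty$\, instead solves the equation with \,$\beta^{\KdV}_0=\partial_{\lambda^{\KdV}}\alpha^{\KdV}|_0$\,, which is strictly upper triangular and has only a \,$\diff\tilde z$-component; the resulting \,$G_\infty (F^{\KdV}_\infty)^{-1}$\, has a \,$(1,0)$-form differential, is not \,$\mathfrak{su}(2)$-valued, and cannot be the minimal immersion. This is not merely the ``identification obstacle'' you flag at the end, but a wrong limit. The paper's computation avoids it by \emph{not} rescaling \,$\lambda$\, in \,$G_n$\,: it keeps \,$\tilde G_n = -h_n^{-1}\tfrac{\mi}{H}\lambda\,\partial_\lambda F_n$\, at the fixed Sym point \,$\lambda=e^{\mi\varphi}$\, and shows that the inhomogeneity \,$\beta_n=-h_n^{-1}\tfrac{\mi}{H}\lambda\,\partial_\lambda\alpha_n$\, converges to the full two-component \,$\beta_\infty = e^{\tilde\omega_\infty/2}\tfrac{\mi}{2}\left(\begin{smallmatrix}0&e^{-\mi\varphi}\diff\tilde z\\ e^{\mi\varphi}\diff\bar{\tilde z}&0\end{smallmatrix}\right)$\,, so that \,$\tilde G_n$\, converges as the solution of an inhomogeneous linear system with converging coefficients. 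You should adopt this formulation for (5).
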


\begin{remark}
	\label{R:blowup:motivation}
  As we explained at the beginning of the section, condition (a) of Theorem~\ref{T:blowup1:blowup1} is required so that the blown-up sequence of solutions of the sinh-Gordon equation can converge to a solution of a different integrable system.
  
  It was shown in Proposition~\ref{P:blowup1:necessary} that condition (b) is a necessary condition for the convergence of the blown-up conformal factor; this condition compares the rate of convergence of the conformal factor \,$e^{\omega_n(z_0)}$\, to the rate of convergence of the shortest root \,$\lambda_{n,1}$\,. By Equation~\eqref{eq:cmc:xi-terms}, this condition can also be interpreted as a condition on the rate of convergence of the lowest term of the polynomial Killing fields, or by Equation~\eqref{eq:cmc:xi-trace:omega}, of the \,$\beta_{n,k}$\, as \,$n\to\infty$\,. 
  
  Condition (c) is the only one of the conditions in the theorem that is stronger than the necessary conditions of Proposition~\ref{P:blowup1:necessary}. It implies in particular that the divisors corresponding to the spectral line bundle \,$\Lambda_n$\, and to its dual bundle converge to non-special divisors on the limiting spectral curve in the blow-up, which is necessary for that divisor to correspond to an immersion into \,$\mathbb{R}^3$\,. However, condition (c) is more restrictive than this non-specialness condition, because it excludes divisor points in singularities of the spectral curve, and also multiple points in the divisor with equal projections on the \,$\lambda$-plane. We impose these more restrictive conditions to avoid the need to work with singular algebraic curves and generalised divisors (in the sense of Hartshorne, also compare \cite{KLSS}).
  \end{remark}

\begin{proof}[Proof of Theorem~\ref{T:blowup1:blowup1}.]
Due to our choice of \,$\ell_n$\,, \,$r_n$\, and \,$h_n$\, in \eqref{eq:blowup1:blowup1:factors}
we have \,$e^{\tilde{\omega}_n(0)} = h_n^{-2}\,r_n^2\,e^{\omega_n(z_{0})} = |\lambda_{n,1}|^{-1}\,e^{\omega_n(z_{0})}$\,,
thus it follows from condition (b) that we can choose a subsequence of \,$(f_n)$\, so that \,$\tilde{\omega}_n(0)$\, converges to some real number \,$\tilde{\omega}_\infty(0)$\,.
We will see in the proof of (3) below that we will need to pass to a further subsequence so that  \,$\tilde{\omega}_n(\tilde{z})$\, also converges for \,$\tilde{z}\neq 0$\,. 
  
To prove (1), we investigate how the spectral data behave under the blow-up. By spectral data, we mean the polynomial \,$a_n(\lambda)$\, which defines the spectral curve \,$\Sigma_n$\,, the anti-symmetric function \,$\nu$\, on \,$\Sigma_n$\,, and the potential \,$\zeta_n$\,. The roots \,$\lambda_{n,1},\dotsc,\lambda_{n,g}$\, of \,$a_n(\lambda)$\,
inside the unit disk correspond to \,$\tilde{\lambda}_{n,k} = \ell_n^{-1}\,\lambda_{n,k}$\,. Due to our choice \eqref{eq:blowup1:blowup1:factors} and the ordering
\eqref{eq:cmc:lambdak-ordering}, we have
$$ 1 = |\tilde{\lambda}_{n,1}| \leq |\tilde{\lambda}_{n,2}| \leq \dotsc \leq |\tilde{\lambda}_{n,g}| \; . $$
Note that for \,$k\geq 2$\, it is possible for the sequence \,$(|\tilde{\lambda}_{n,k}|)_{n\in \N}$\, to be unbounded. By passing to a subsequence, we can arrange
that for every \,$k \geq 1$\, the sequence \,$(\tilde{\lambda}_{n,k})_{n\in \N}$\, converges to some \,$\tilde{\lambda}_{\infty,k} \in \mathbb{P}^1 \setminus \mathbb{D}$\,.
There exists \,$1\leq \tilde{d} \leq g$\, so that \,$\tilde{\lambda}_{\infty,k} \in \C$\, for \,$k\leq \tilde{d}$\, and \,$\tilde{\lambda}_{\infty,k}=\infty$\, for \,$k>\tilde{d}$\,.
By Proposition~\ref{P:cmc:lambdak} we have
$$ a_n(\ell_n\tilde{\lambda}) = -\frac12 HQ \prod_{k=1}^g (1-\tilde{\lambda}_{n,k}^{-1}\tilde{\lambda})\cdot (1-\ell_n\,\bar{\lambda}_{n,k}\,\tilde{\lambda}) \; . $$
Due to \,$|\lambda_{n,k}| < 1$\, for all \,$n,k$\,, we here have \,$\ell_n\,\bar{\lambda}_{n,k} \to 0$\, for any \,$k$\,; and moreover \,$\tilde{\lambda}_{n,k}^{-1} \to 0$\,
for \,$k>\tilde{d}$\,. This shows that the polynomials \,$\tilde{a}_n(\tilde{\lambda}) := a_n(\ell_n\,\tilde{\lambda})$\, converge for \,$n\to\infty$\, to the polynomial
$$ \tilde{a}_\infty(\tilde{\lambda}) = -\frac12 HQ \prod_{k=1}^{\tilde{d}} (1-\tilde{\lambda}_{\infty,k}^{-1}\tilde{\lambda}) \; . $$
Clearly, the polynomial \,$\tilde{a}_\infty(\tilde{\lambda})$\, has degree \,$\tilde{d}$\, and its zeros are exactly
\,$\tilde{\lambda}_{\infty,1},\dotsc,\tilde{\lambda}_{\infty,\tilde{d}}$\,. The spectral curve corresponding to \,$a_n(\lambda)$\, is given by
$$ \Sigma_n = \{ (\lambda,\nu) \mid \nu^2 = \lambda\,a_n(\lambda) \} \; . $$
On \,$\Sigma_n$\, we thus have
$$ \nu^2 = \lambda\,a_n(\lambda) = \ell_n\,\tilde{\lambda}\,a_n(\ell_n\,\tilde{\lambda}) = \ell_n\,\tilde{\lambda}\,\tilde{a}_n(\tilde{\lambda}) \; . $$
Thus by blowing up the holomorphic function \,$\nu$\, on \,$\Sigma_n$\, by
$$ \nu = \ell_n^{1/2}\,\tilde{\nu}\;,\quad\text{we have } \tilde{\nu}^2 = \tilde{\lambda}\,\tilde{a}_n(\tilde{\lambda}) \; . $$
The function \,$\tilde{\nu}$\, defines the rescaled (blown-up) spectral curve
$$ \tilde{\Sigma}_n = \{ (\tilde{\lambda},\tilde{\nu}) \mid \tilde{\nu}^2 = \tilde{\lambda}\,\tilde{a}_n(\tilde{\lambda}) \} \;, $$
which is naturally biholomorphic to \,$\Sigma_n$\, via the biholomorphic map
$$ \Theta_n: \tilde{\Sigma}_n \to \Sigma_n,\; (\tilde{\lambda},\tilde{\nu}) \mapsto (\ell_n\,\tilde{\lambda}, \ell_n^{1/2}\,\tilde{\nu}) \; . $$
It follows that if we view \,$\tilde{\nu} = \tilde{\nu}(\tilde{\lambda})$\, as a two-valued, holomorphic function on \,$\C \ni \tilde{\lambda}$\,, then
\,$\tilde{\nu}$\, converges for \,$n\to \infty$\, to a holomorphic function on the hyperelliptic complex curve
$$ \tilde{\Sigma}_\infty = \{ (\tilde{\lambda},\tilde{\nu}) \mid \tilde{\nu}^2 = \tilde{\lambda}\,\tilde{a}_\infty(\tilde{\lambda}) \} $$
which we may regard as the limit of the curves \,$\tilde{\Sigma}_n$\,. Because \,$\tilde{\lambda}\tilde{a}_\infty(\tilde{\lambda})$\, is a polynomial of degree \,$\tilde{d}+1$\,,
the complex curve \,$\tilde{\Sigma}_\infty$\, has genus \,$\tilde{g} = \lfloor \tfrac12 \tilde{d} \rfloor$\,. 

Further we let \,$(\beta_{n,1},\nu_{n,1}),\dotsc,(\beta_{n,g},\nu_{n,g}) \in \Sigma_n$\, be the points of the spectral divisor corresponding to the line bundle \,$\Lambda_n$\,.
Then \,$(\tilde{\beta}_{n,k},\tilde{\nu}_{n,k}) = (\ell_n^{-1}\,\beta_{n,k},\ell_n^{-1/2}\,\nu_{n,k}) \in \tilde{\Sigma}_n$\, are the corresponding points for the
line bundle \,$\tilde{\Lambda}_n = \Theta_n^* \Lambda_n$\, on \,$\tilde{\Sigma}_n$\,. By Proposition~\ref{P:cmc:magic-estimates-new}(2) we have
$$ 1 = |\tilde{\lambda}_{n,1}| \leq |\tilde{\beta}_{n,k}| \leq \ell_n^{-2}\,|\tilde{\lambda}_{n,1}|^{-1} = \ell_n^{-2} \; . $$
By again passing to a subsequence, we may assume that \,$\tilde{\beta}_{n,k}$\, converges for \,$n\to \infty$\, to some \,$\tilde{\beta}_{\infty,k} \in \mathbb{P}^1$\,.
Note that \,$|\tilde{\beta}_{\infty,k}| \geq 1$\,, but \,$\tilde{\beta}_{\infty,k}=\infty$\, is possible for all \,$k$\,. 
We define 
$$ S := \bigr\{ k \in \{1,\dotsc,g\} \,|\, \tilde{\beta}_{\infty,k} \neq \infty \bigr\}\; . $$
Because of \,$\tilde{\nu}_{n,k}^2 = \tilde{\beta}_{n,k}\,\tilde{a}_n(\tilde{\beta}_{n,k})$\, we can also achieve that \,$\tilde{\nu}_{n,k}$\, converges to some \,$\tilde{\nu}_{\infty,k}$\,
so that \,$(\tilde{\beta}_{\infty,k},\tilde{\nu}_{\infty,k})\in \tilde{\Sigma}_\infty$\,. Due to condition (c), the finite \,$\tilde{\beta}_{\infty,k}$\, are pairwise unequal
and in particular the divisor \,$\sum_k (\tilde{\beta}_{\infty,k}, \tilde{\nu}_{\infty,k})$\, on \,$\tilde{\Sigma}_\infty$\, is non-special. This completes the proof of (1). 

To prove (2), we will show that for the subsequence of the \,$(\zeta_n)$\, we have now chosen, the rescaled cmc potentials \,$\tilde{\zeta}_n$\, defined by Equation~\eqref{eq:blowup1:blowup-spectral} converge to some \,$\tilde{\zeta}_\infty$\, as \,$n\to\infty$\,,
and that the additional convergence statements in (3), (4) hold for \,$\tilde{z}=0$\,. For this purpose we write
$$ \zeta_n(\lambda) = \left( \begin{matrix} u_n(\lambda) & v_n(\lambda) \\ w_n(\lambda) & -u_n(\lambda) \end{matrix} \right)
\AND \tilde{\zeta}_n(\tilde{\lambda}) = \left( \begin{matrix} \tilde{u}_n(\tilde{\lambda}) & \tilde{v}_n(\tilde{\lambda}) \\ \tilde{w}_n(\tilde{\lambda}) & -\tilde{u}_n(\tilde{\lambda}) \end{matrix} \right) \; . $$

We have by Equation~\eqref{eq:cmc:xi-trace:v}
$$ \tilde{\lambda} \tilde{v}_n(\tilde{\lambda}) = s_n\,\ell_n^{-1} v_n(\ell_n^{-1}\,\tilde{\lambda})
= \frac12 H \ell_n^{-1/2}\,e^{\omega_n(z_{0})/2} \prod_{k=1}^g (1-\beta_{n,k}^{-1}\,\ell_n^{-1}\,\tilde{\lambda})
= \frac12 H e^{\tilde{\omega}_n(0)/2} \prod_{k=1}^g (1-\tilde{\beta}_{n,k}^{-1}\,\tilde{\lambda}) \; . $$
 Hence \,$\tilde{\lambda}\,\tilde{v}_n(\tilde{\lambda})$\, converges for \,$n\to\infty$\, to the polynomial 
$$ \tilde{\lambda}\,\tilde{v}_\infty(\tilde{\lambda}) = \frac12 H e^{\tilde{\omega}_\infty(0)/2} \prod_{k\in S} (1-\tilde{\beta}_{\infty,k}^{-1}\,\tilde{\lambda}) \; . $$
We define \,$\chi_{n,k}(\lambda) = \prod_{k'\neq k} \frac{\lambda-\beta_{n,k'}}{\beta_{n,k}-\beta_{n,k'}}$\,, compare Equation~\eqref{eq:cmc:xi-trace:chi},
then we have
$$ \chi_{n,k}(\ell_n\tilde{\lambda}) = \prod_{k'\neq k} \frac{\ell_n \tilde{\lambda}-\beta_{n,k'}}{\beta_{n,k}-\beta_{n,k'}}
= \prod_{k'\neq k} \frac{\tilde{\lambda}-\tilde{\beta}_{n,k'}}{\tilde{\beta}_{n,k}-\tilde{\beta}_{n,k'}} =: \tilde{\chi}_{n,k}(\tilde{\lambda}) \; . $$
If \,$k\in S$\,, then \,$\tilde{\chi}_{n,k}(\tilde{\lambda})$\, converges for \,$n\to\infty$\, to
\,$\tilde{\chi}_{\infty,k}(\tilde{\lambda}) = \prod_{k' \in S \setminus \{k\}} \frac{\tilde{\lambda}-\tilde{\beta}_{\infty,k'}}{\tilde{\beta}_{\infty,k}-\tilde{\beta}_{\infty,k'}}$\,;
note that the denominator cannot become zero because of (c). If \,$k\not\in S$\,, then \,$\tilde{\chi}_{n,k}(\tilde{\lambda})
= \tilde{\beta}_{n,k}^{-1} \prod_{k'\neq k} \tfrac{\tilde{\beta}_{n,k'}^{-1}\tilde{\lambda}-1}{\tilde{\beta}_{n,k'}^{-1}-\tilde{\beta}_{n,k}^{-1}}$\,
converges to zero; note that condition (c) again implies that the denominator is bounded away from zero. It follows by Equation~\eqref{eq:cmc:xi-trace:u}
that
$$ \tilde{u}_n(\tilde{\lambda}) = s_n\, u_n(\ell_n\,\tilde{\lambda}) = -\ell_n^{1/2} \sum_{k=1}^g \beta_{n,k}^{-1}\,\nu_{n,k}\,\chi_{n,k}(\lambda) = -\sum_{k=1}^g \tilde{\beta}_{n,k}^{-1}\,\tilde{\nu}_{n,k}\,\tilde{\chi}_{n,k}(\tilde{\lambda}) $$
converges for \,$n\to\infty$\, to the polynomial
$$ \tilde{u}_\infty(\tilde{\lambda}) = -\sum_{k\in S} \tilde{\beta}_{\infty,k}^{-1}\,\tilde{\nu}_{\infty,k}\,\tilde{\chi}_{\infty,k}(\tilde{\lambda}) \; .$$

For the proof of the convergence of \,$\tilde{\zeta}_n$\,, it now only remains to control the \,$w_n(\lambda)$\,. From the equation 
$$ a_n(\lambda) = -\lambda\,\det(\zeta_n(\lambda)) = -\lambda \bigr( u_n(\lambda)^2 - v_n(\lambda)\,w_n(\lambda) \bigr) $$
we get because of \,$\ell_n\,s_n^{-2} = 1$\, 
\begin{align*}
\tilde{a}_n(\tilde{\lambda}) & = a_n(\ell_n\,\tilde{\lambda}) = -\ell_n\tilde{\lambda}\,\bigr(u_n(\ell_n\tilde{\lambda})^2 - v_n(\ell_n\tilde{\lambda})w_n(\ell_n\tilde{\lambda})\bigr) \\
& = -\tilde{\lambda}\,\bigr(\tilde{u}_n(\tilde{\lambda})^2 - \tilde{v}_n(\tilde{\lambda})\tilde{w}_n(\tilde{\lambda})\bigr)
\end{align*}
and therefore
$$ \tilde{w}_n(\tilde{\lambda})
= \frac{\tilde{a}_n(\tilde{\lambda})+\tilde{\lambda}\tilde{u}_n(\tilde{\lambda})^2}{\tilde{\lambda}\,\tilde{v}_n(\tilde{\lambda})} \; . $$
Because the term \,$\tilde{a}_n(\tilde{\lambda}) + \tilde{\lambda}\tilde{u}_n(\tilde{\lambda})^2$\, vanishes at the divisor points \,$\tilde{\beta}_{n,k}$\,, which are also the zeros of the polynomial \,$\tilde{\lambda}\,\tilde{v}_n(\tilde{\lambda})$\,, we see that \,$\tilde{w}_n(\tilde{\lambda})$\, is a polynomial for every \,$n$\,. The convergence of all terms on the right hand side of the above equation for \,$n\to \infty$\, now implies that also the polynomials \,$\tilde{w}_n(\tilde{\lambda})$\, converge to a polynomial \,$\tilde{w}_\infty(\tilde{\lambda})$\, for \,$n\to \infty$\,. This completes the proof of the convergence of the \,$\tilde{\zeta}_n$\,. We postpone showing that the degree of \,$\tilde{\zeta}_\infty$\, equals \,$\tilde{g}$\, until a later part of the proof.

We have \,$\diff z = r_n\,\diff \tilde{z}$\, and \,$\tilde{\zeta}_{n,k} = s_n\,\ell_n^k\,\zeta_{n,k}$\, and therefore due to our choice \,$r_n=s_n$\,
\begin{align*}
  \alpha_{\ell_n \tilde{\lambda}}(\zeta_n)
  & = \begin{pmatrix} u_{n,0} & v_{n,-1}\,\ell_n^{-1}\,\tilde{\lambda}^{-1} \\ w_{n,0} & -u_{n,0} \end{pmatrix} r_n \diff\tilde{z}
  - \begin{pmatrix} \bar{u}_{n,0} & \bar{w}_{n,0} \\ \bar{v}_{n,-1}\,\ell_n\,\tilde{\lambda} & -\bar{u}_{n,0} \end{pmatrix} r_n \diff\bar{\tilde{z}} \\
  & = \underbrace{\begin{pmatrix} \tilde{u}_{n,0} & \tilde{v}_{n,-1}\,\tilde{\lambda}^{-1} \\ \tilde{w}_{n,0} & -\tilde{u}_{n,0} \end{pmatrix}}_{= \tilde{U}_n} \diff\tilde{z}
  \underbrace{- \begin{pmatrix} \bar{\tilde{u}}_{n,0} & \bar{\tilde{w}}_{n,0} \\ \ell_n^2 \cdot \bar{\tilde{v}}_{n,-1}\,\tilde{\lambda} & -\bar{\tilde{u}}_{n,0} \end{pmatrix}}_{= \tilde{V}_n} \diff\bar{\tilde{z}} \; . 
\end{align*}
By our preceding results on the convergence of \,$\tilde{\zeta}_n$\, and \,$\ell_n\to 0$\,, the second claim of (2) follows.


To prove the remaining statements of (3)--(5),
we use the fact that by Symes' method for the cmc integrable system (Proposition~\ref{P:cmc:symes})
the extended frame \,$F_n(z,\lambda)$\, corresponding to \,$f_n$\, occurs in the Iwasawa decomposition (Proposition~\ref{P:cmc:iwasawa})
\,$\Phi_n(z,\lambda) = F_n(z,\lambda)\cdot B_n(z,\lambda)$\, of \,$\Phi_n(z,\lambda) = \exp\bigr( (z-z_{0}) \zeta_n\bigr)$\,. Similarly,
by Symes' method for the integrable system of Liouville's equation (Proposition~\ref{P:KdV:symes}), the factor \,$F_\infty^{\Liouville}$\, in the
modified Birkhoff decomposition (Proposition~\ref{P:KdV:modified-birkhoff}) \,$\Phi_\infty^{\Liouville}(\tilde{z},\lambda^{\Liouville}) = F_\infty^{\Liouville}(\tilde{z},\lambda^{\Liouville}) \cdot
B_\infty^{\Liouville}(\tilde{z},\lambda^{\Liouville})$\, of \,$\Phi_\infty^{\Liouville}(\tilde{z},\lambda^{\Liouville}) = \exp(\tilde{z}\,\zeta_\infty^{\Liouville})$\, is
the extended frame of a minimal surface immersion \,$\tilde{f}_\infty$\,, which is obtained by the Sym-Bobenko formula of Proposition~\ref{P:KdV:sym-bobenko}.

We now also consider \,$\tilde{\Phi}_n(\tilde{z},\tilde{\lambda}) = \exp( \tilde{z}\,\tilde{\zeta}_n)$\,. For \,$n\to\infty$\,, \,$\tilde{\zeta}_n(\tilde{\lambda})$\, converges
to \,$\zeta_\infty^{\Liouville}(\tilde{\lambda}^{-1})$\, and therefore \,$\tilde{\Phi}_n(\tilde{z},\tilde{\lambda})$\, converges to
\,$\Phi_\infty^{\Liouville}(\tilde{z},\tilde{\lambda}^{-1})$\,. On the other hand we have
\begin{align}
\label{eq:blowup1:blowup1:tildePhin-iwasawa}
\tilde{\Phi}_n (\tilde{z},\tilde{\lambda}) & = \exp\bigr( \tilde{z}\,\tilde{\zeta}_n(\tilde{\lambda})\bigr) = \exp\bigr( \tilde{z}\cdot s_n\,\zeta_n(\ell_n\tilde{\lambda}) \bigr)
\overset{r_n=s_n}{=} \exp\bigr( r_n\tilde{z}\cdot \zeta_n(\ell_n\tilde{\lambda}) \bigr) = \Phi_n(z_{n,0}+r_n\tilde{z},\ell_n\tilde{\lambda}) \\
\notag
& = \tilde{F}_n(\tilde{z},\tilde{\lambda}) \cdot \tilde{B}_n(\tilde{z},\tilde{\lambda})
\quad\text{with}\quad  \tilde{F}_n(\tilde{z},\tilde{\lambda}) = F_n(z_{n,0}+r_n\tilde{z},\ell_n\tilde{\lambda})\;,\;\;
\tilde{B}_n(\tilde{z},\tilde{\lambda}) = B_n(z_{n,0}+r_n\tilde{z},\ell_n\tilde{\lambda})\;. 
\end{align}
By the properties of the extended frame \,$F_n$\,,
\,$\tilde{F}_n(\tilde{z},\tilde{\lambda})$\, is holomorphic for \,$\tilde{\lambda} \in \C^{\times} $\, and we have \,$\tilde{F}_n(\tilde{z},\tilde{\lambda}) \in \mathrm{SU}(2)$\,
for \,$|\tilde{\lambda}| = \ell_n^{-1}$\,, i.e.~\,$|\lambda^{\Liouville}| = \ell_n$\,. Moreover, by the properties of the Iwasawa decomposition,
\,$\tilde{B}_n(\tilde{z},\tilde{\lambda})$\, is holomorphic for \,$\tilde{\lambda} \in B(0,\ell_n^{-1})$\,, i.e.~for \,$\lambda^{\Liouville} \in B(\infty,\ell_n^{-1})$\,,
and is equal to an upper triangular matrix with real diagonal entries for \,$\tilde{\lambda}=0$\,, i.e.~for \,$\lambda^{\Liouville}=\infty$\,.
This shows that the \,$(\ell_n^{-1})$-Iwasawa decomposition \eqref{eq:blowup1:blowup1:tildePhin-iwasawa} of \,$\tilde{\Phi}_n(\tilde{z},\tilde{\lambda})$\, converges to the
modified Birkhoff decomposition of \,$\Phi_\infty^{\Liouville}(\tilde{z},\lambda^{\Liouville}=\tilde{\lambda}^{-1})$\,. This implies the claim about the extended frames in (3).

We now consider also the polynomial Killing fields 
\,$\tilde{\xi}_n(\tilde{z},\tilde{\lambda})$\, with
\,$\tilde{\xi}_n(0,\tilde{\lambda}) = \tilde{\zeta}_n(\tilde{\lambda})$\,. By Equation~\eqref{eq:cmc:xi-basepointchange}
we have
$$ \tilde{\xi}_n(\tilde{z}, \tilde{\lambda}) = \tilde{F}_n(\tilde{z},\tilde{\lambda})^{-1}\cdot \tilde{\zeta}_n(\tilde{\lambda}) \cdot \tilde{F}_n(\tilde{z},\tilde{\lambda}) \; .  $$
Because of \,$\tilde{F}_n(\tilde{z},\tilde{\lambda}) \to \tilde{F}_\infty(\tilde{z},\tilde{\lambda})$\, and
\,$\tilde{\zeta}_n(\tilde{\lambda}) \to \tilde{\zeta}_\infty(\tilde{\lambda})$\,, we see that \,$\tilde{\xi}_n(\tilde{z},\tilde{\lambda})$\, converges locally uniformly in
\,$\tilde{z}\in \C$\, to a function \,$\tilde{\xi}_\infty(\tilde{z},\tilde{\lambda})$\,. Then \,$\xi^{\Liouville}(\tilde{z},\lambda^{\Liouville})
= \tilde{\xi}_\infty(\tilde{z},(\lambda^{\Liouville})^{-1})$\,
is a solution of the differential equation \,$\diff \xi^{\Liouville}_\infty + [\alpha^{\Liouville}_{\lambda^{\Liouville}}(\xi^{\Liouville}),\xi^{\Liouville}] = 0$\, with
\,$\xi^{\Liouville}_\infty(0,\lambda^{\Liouville}) = \zeta^{\Liouville}_\infty(\lambda^{\Liouville}) \in \mathcal{P}^{\Liouville}_{\tilde{d}}$\,. Therefore we have \,$\xi^{\Liouville}_\infty(\tilde{z},\lambda^{\Liouville}) \in \mathcal{P}^{\Liouville}_{\tilde{d}}$\, for all \,$\tilde{z}\in \bbC$\, by the analogue of the Pinkall-Sterling iteration in Proposition~\ref{P:KdV:ps}. This completes the proof of (3). 

The lower left entry of the \,$\tilde{\lambda}^0$-term of \,$\tilde{\xi}_n$\,
is equal to \,$-Q\,e^{-\tilde{\omega}_n/2}$\, by Equation~\eqref{eq:cmc:xi-vm1w0}. Therefore \,$\tilde{\omega}_n$\, converges locally uniformly to the smooth, real-valued
function \,$\tilde{\omega}_\infty$\, such that the lower left entry of the \,$(\lambda^{\Liouville})^0$-term of \,$\xi_\infty^{\Liouville}$\, is equal to \,$-Q\,e^{-\tilde{\omega}_\infty/2}$\,, giving (4).
Due to Equation~\eqref{eq:KdV:xi-terms} this function \,$\tilde{\omega}_\infty$\, will indeed be the conformal factor of the induced metric of \,$\tilde{f}_\infty$\,.
We also note that the sequence \,$(\alpha_{\tilde{\lambda}}(\tilde{\xi}_n))_{n\in \N}$\, converges to \,$\alpha^{\Liouville}_{\lambda^{\Liouville}=\tilde{\lambda}^{-1}}(\xi^{\Liouville}_\infty)$\,.

We now show that the degree of \,$\tilde{\xi}_\infty$\, is indeed \,$\tilde{g}$\,. Due to the equation 
$$ \tilde{a}_\infty(\tilde{\lambda}) = -\tilde{\lambda}\,\det(\tilde{\xi}_\infty) = \tilde{\lambda}\tilde{v}_\infty(\tilde{\lambda})\tilde{w}_\infty(\tilde{\lambda}) - \tilde{\lambda}\,\tilde{u}(\tilde{\lambda})^2 $$
this is equivalent to the condition that in the lowest order term of \,$\xi^{\Liouville}_\infty$\, with respect to \,$\lambda^{\Liouville}$\,, both off-diagonal entries \,$\tau_k$\, and \,$\sigma_k$\, are non-zero. The latter statement follows from the fact that \,$\xi^{\Liouville}_\infty$\, is a Liouville polynomial Killing field and therefore satisfies the Pinkall-Sterling iteration in the form of Proposition~\ref{P:KdV:ps}. Indeed, if \,$\tau_k$\, vanishes for some \,$k$\,, then \,$u_k$\, vanishes by the right-hand equation in \eqref{eq:KdV:ps:ps-tau} and therefore \,$\sigma_k$\, vanishes by Equation~\eqref{eq:KdV:ps:ps-sigma} applied to \,$k$\, in the place of \,$k-1$\,. If \,$\sigma_k$\, vanishes for some \,$k$\,, then \,$u_k$\, is holomorphic by \eqref{eq:KdV:ps:ps-sigma}, therefore \,$u_k$\, vanishes by \eqref{eq:KdV:ps:uk-lin-liouville}, and hence \,$\tau_k$\, is constant by Equations~\eqref{eq:KdV:ps:ps-tau}. By choosing the integration constant \,$C_k=0$\, we can ensure \,$\tau_k=0$\,. This argument also shows that among the \,$\tilde{\beta}_{n,k}$\, there are exactly \,$\tilde{g}$-many which have a finite limit as \,$n\to \infty$\, (\,$\# S = \tilde{g}$\,), because the finite \,$\tilde{\beta}_{\infty,k}$\, are precisely the roots of the polynomial \,$\tilde{\lambda}\tilde{v}_\infty(\tilde{\lambda})$\,. 

Finally we show the convergence of the sequence \,$(\tilde{f}_n)$\, of blown up cmc immersions to a minimal immersion \,$\tilde{f}_\infty$\,.  For this purpose we express the cmc immersions \,$f_n$\, by the
Sym-Bobenko formula \eqref{eq:cmc:sym-bobenko} for cmc surfaces with respect to some Sym point \,$\lambda_s = e^{\mi\varphi} \in S^1$\,. We shall blow up this formula with respect to the spatial parameter \,$z$\,, while keeping the spectral parameter \,$\lambda$\, fixed at the Sym point \,$\lambda_s$\,. We will prove that all terms in this formula converge under this blow-up, and that the limits satisfy the Sym-Bobenko formula for minimal surfaces (Proposition~\ref{P:KdV:sym-bobenko}).

After applying suitable translations in \,$\R^3$\,
so that \,$f_n(z_{0})=0$\,, we have
$$ f_n(z) = G_n \cdot F_n^{-1}\biggr|_{\lambda=e^{\mi\varphi}} \quad\text{with}\quad G_n(z,\lambda) := -\frac{1}{H}\,\mi\,\lambda\,\frac{\partial F_n}{\partial \lambda} $$
and therefore
$$ \tilde{f}_n(\tilde{z}) = h_n^{-1} \, f_n(z_{0}+r_n\tilde{z}) = h_n^{-1}\,G_n(z_{0}+r_n\tilde{z},\lambda)\,F_n^{-1}(z_{0}+r_n\tilde{z},\lambda) \biggr|_{\lambda=e^{\mi\varphi}}
\!\!\!\!\!\!\!\!= \tilde{G}_n(\tilde{z},\lambda=e^{\mi\varphi})\cdot \tilde{F}_n(\tilde{z},\tilde{\lambda}=\ell_n^{-1} e^{\mi\varphi})^{-1}, $$
where we
$$ \text{recall}\quad \tilde{F}_n(\tilde{z},\tilde{\lambda}) = F_n(z_{0}+r_n\tilde{z},\ell_n\tilde{\lambda}) \quad\text{and define}\quad
\tilde{G}_n(\tilde{z},\lambda) = h_n^{-1}\cdot G_n(z_{0}+r_n\tilde{z},\lambda) $$
(note that we do not blow up the parameter \,$\lambda$\, of \,$\tilde{G}_n$\,). Here we already know that the blown-up extended frame \,$\tilde{F}_n$\, converges for \,$n \to \infty$\,, whereas we still need to investigate the limit behaviour of \,$\tilde{G}_n$\,.
We have 
$$ \tilde{G}_n(\tilde{z},\lambda)
= -h_n^{-1}\,\frac{\mi}{H} \lambda \frac{\partial F_n(\tilde{z}_{0}+r_n\tilde{z},\lambda)}{\partial \lambda} \;. $$
Now \,$\diff F_n = F_n\,\alpha_n$\,, where \,$\alpha_{n,\lambda} = \alpha_\lambda(\xi_n)$\, is given by Equation~\eqref{eq:cmc:alpha} with respect to
the solution \,$\omega=\omega_n$\, of the sinh-Gordon equation, and therefore \,$\diff \tfrac{\partial F_n}{\partial \lambda} = \tfrac{\partial F_n}{\partial \lambda} \alpha_n + F_n
\tfrac{\partial \alpha_n}{\partial \lambda}$\,, whence 
$$ \diff \tilde{G}_n = \tilde{G}_n\,\alpha_n + \tilde{F}_n \,\beta_n $$
follows with
$$ \beta_n = - h_n^{-1}\frac{\mi}{H}\lambda \frac{\partial \alpha_n}{\partial \lambda}
\overset{\eqref{eq:cmc:sym-bobenko:beta}}{=} h_n^{-1} \frac{\mi}{2}e^{\omega_n/2}\begin{pmatrix} 0 & \lambda^{-1}\,\diff z \\ \lambda \diff \bar{z} & 0 \end{pmatrix}
= e^{\tilde{\omega}_n/2}\,\frac{\mi}{2} \begin{pmatrix} 0 & \lambda^{-1}\,\diff\tilde{z} \\ \lambda \diff\bar{\tilde{z}} & 0 \end{pmatrix} \; . $$
Here we again used the fact that due to our choices of blow-up factors \eqref{eq:blowup1:blowup1:factors} we have \,$h_n^{-1} e^{\omega_n/2}\diff z
= e^{\tilde{\omega}_n/2}\diff \tilde{z}$\,. We now consider these objects at the Sym point \,$\lambda=e^{\mi\varphi}$\,, which corresponds to \,$\tilde{\lambda}=\ell_n^{-1}
e^{\mi\varphi}$\,, and take the limit as \,$n\to \infty$\,. Then \,$\tilde{F}_n(\tilde{z},\tilde{\lambda})$\, converges to \,$\tilde{F}_\infty(\tilde{z},\infty)
= F_\infty^{\Liouville}(\tilde{z},0)$\,, and the factors of \,$\diff \tilde{z}$\, and of \,$\diff \bar{\tilde{z}}$\, in \,$\alpha_{n,\lambda}$\, converge to the corresponding
factors in \,$\alpha^{\Liouville}_{\lambda^{\Liouville}=0}(\xi^{\Liouville}_\infty)$\,. Moreover the \,$\diff \tilde{z}$-\, and \,$\diff \overline{\tilde{z}}$\,-factors of \,$\beta_n$\, converge to the corresponding factors of
$$ \beta_\infty = e^{\tilde{\omega}_\infty/2}\,\frac{\mi}{2} \begin{pmatrix} 0 & e^{-\mi\varphi}\,\diff\tilde{z} \\ e^{\mi\varphi} \diff\bar{\tilde{z}} & 0 \end{pmatrix} \; . $$
Hence \,$\tilde{G}_n(\tilde{z},\lambda)$\, converges at least for \,$\lambda=e^{\mi\varphi}$\, to the solution of
\,$\diff \tilde{G}_\infty = \tilde{G}_\infty \alpha^{\Liouville}_{\infty,\lambda^{\Liouville}=0} + \tilde{F}_\infty \beta_\infty$\,. Then \,$\tilde{f}_n = \tilde{G}_n\cdot \tilde{F}_n^{-1}$\,
converges to \,$\tilde{G}_\infty \cdot \tilde{F}_\infty^{-1} =: \tilde{f}_\infty$\,, which by Proposition~\ref{P:KdV:sym-bobenko} is a minimal conformal immersion
with induced metric \,$e^{\tilde{\omega}_\infty}\,\diff \tilde{z}\,\diff \bar{\tilde{z}}$\, and Hopf differential \,$Q\,e^{\mi\varphi}\,\diff \tilde{z}^2$\,.
Thus (5) is shown when we choose the Sym point \,$\lambda_s = e^{\mi\varphi}=1$\,. 
\end{proof}

\begin{example}
	\label{E:blowup1:example}
	We describe examples of situations where the conditions (a)--(c) from Theorem~\ref{T:blowup1:blowup1} are fulfilled.
	
	Let an \underline{odd} number \,$g = 2d+1 \geq 1$\, and a sequence \,$(a_n)_{n\in \bbN}$\, of polynomials \,$a_n \in \mathcal{S}^g_1$\, be given. Let \,$\lambda_{n,1},\dotsc,\lambda_{n,g}$\, be the zeros of \,$a_n$\, inside the unit disk, again ordered as in Equation~\eqref{eq:cmc:lambdak-ordering}. 
	We make the assumption that all \,$\lambda_{n,k}$\, go to zero at the same rate, meaning that \,$\ell_n := |\lambda_{n,1}| \to 0$\, and that for all \,$k$\,, \,$\tilde{\lambda}_{\infty,k} = \lim_{n\to\infty} (\ell_n^{-1}\,\lambda_{n,k})$\, converges in \,$\bbC^{\times} $\,. In particular condition (a) from Theorem~\ref{T:blowup1:blowup1} holds. 
	We moreover assume that the \,$\tilde{\lambda}_{\infty,k}$\, are pairwise unequal. Note that we do not prove here that these assumptions can be satisfied by spectral curves of cmc tori, but we conjecture that this is true.
	
	For \,$k\in \{1,\dotsc,g\}$\, we now choose
	\begin{equation}
		\label{eq:blowup1:example:beta-choice}
		\beta_{n,k} = \begin{cases} \lambda_{n,k} & \text{for \,$k$\, odd} \\ \bar{\lambda}_{n,k}^{-1} & \text{for \,$k$\, even} \end{cases} \; .
	\end{equation}
	Then we define (see Equations~\eqref{eq:cmc:xi-trace:omega} and \eqref{eq:cmc:xi-trace:v}) the ``number'' \,$e^{\omega_n(z_{0})}$\, and the polynomials \,$v_n(\lambda)$\, and \,$w_n(\lambda)$\, 
	by
	$$ e^{\omega_n(z_{0})} = 2\frac{|Q|}{H}\prod_{k=1}^g |\beta_{n,k}|\;,\quad 
	v_n(\lambda) = \frac12 H e^{\omega_n(z_{0})/2}\,\lambda^{-1} \prod_{k=1}^g \beta_{n,k}
	\quad\text{and}\quad w_n(\lambda) = -\lambda^{g-1}\,\overline{v_n(\bar{\lambda}^{-1})} \; . $$
	With these definitions we have 
	$$ |\lambda_{n,1}|^{-1} \cdot e^{\omega_n(z_{0})} = 2\frac{|Q|}{H} \,\frac{|\beta_{n,1}|}{|\lambda_{n,1}|} \prod_{j=1}^d |\beta_{n,2j}|\cdot|\beta_{n,2j+1}| 
	= 2\frac{|Q|}{H} \prod_{j=1}^d \frac{|\lambda_{n,2j+1}|}{|\lambda_{n,2j}|} \; . $$
	By our assumptions, this expression converges for \,$n\to\infty$\, to \,$2\tfrac{|Q|}{H} \prod_{j=1}^d \tfrac{|\tilde{\lambda}_{\infty,2j+1}|}{|\tilde{\lambda}_{\infty,2j}|} \in \bbR^+$\,, hence condition (b) from Theorem~\ref{T:blowup1:blowup1} is also satisfied. 
	
	Due to our construction,
	$$ \zeta_n(\lambda) = \begin{pmatrix} 0 & v_n(\lambda) \\ w_n(\lambda) & 0 \end{pmatrix} \in \calP_g $$
	is a cmc potential of spectral genus \,$g$\, that is in the isospectral set of \,$a_n(\lambda)$\,. The corresponding cmc immersion \,$f_n$\, with base point \,$z_{0}$\,, mean curvature \,$H$\, and constant Hopf differential \,$Q\,\mathrm{d}z^2$\, indeed has the conformal factor \,$e^{\omega_n(z_{0})}$\, that was defined above. We finally note that due to our choice of \,$\beta_{n,k}$\, and our assumption that the \,$\tilde{\lambda}_{\infty,k}$\, are pairwise unequal, the condition of Theorem~\ref{T:blowup1:blowup1}(c) is also satisfied. Therefore Theorem~\ref{T:blowup1:blowup1} is applicable in this situation.
\end{example}


\begin{example}
In the situation investigated in this section, one special case is of particular interest to us. We choose \,$H=1$\, and \,$Q=\tfrac12$\,, and the Sym point
\,$\lambda_s = -\mi$\, in Proposition~\ref{P:cmc:sym-bobenko} resp.~\,$\varphi = -\tfrac\pi2$\, in Proposition~\ref{P:KdV:sym-bobenko}. 
Suppose that the spectral genus \,$g=2d+1$\, is odd, that 
\,$\lambda_{n,1}$\, goes to zero as \,$n\to\infty$\,, and that for \,$k\geq 2$\, we have \,$\tfrac{|\lambda_{n,k}|}{|\lambda_{n,1}|} \to \infty$\,. Moreover we suppose that \,$\beta_{n,1}$\, goes to zero at the same rate as \,$\lambda_{n,1}$\, does,
i.e.~\,$\tilde{\beta}_{n,1} = |\lambda_{n,1}|^{-1}\,\beta_{n,1}$\, has a limit \,$\tilde{\beta}_{\infty,1} \in \C \setminus \mathbb{D}$\,, that for \,$k\geq 2$\, we have
\,$\tfrac{|\beta_{n,k}|}{|\lambda_{n,1}|} \to \infty$\,, and that these divisor points are pairwise conjugate to each other in the sense that
\begin{equation}
\label{eq:blowup:helicoid:beta-cancel}
|\beta_{n,2j}| < 1 \AND \beta_{n,2j+1} = \bar{\beta}_{n,2j}^{-1} \quad\text{holds for \,$j=1,\dotsc,d$\,.}
\end{equation}
For example, choosing \,$\beta_{n,k}$\, as in Equation~\eqref{eq:blowup1:example:beta-choice} satisfies these assumptions. 

Equation~\eqref{eq:cmc:xi-trace:omega} shows that the hypotheses (a)--(c) of Theorem~\ref{T:blowup1:blowup1} are satisfied in the present situation.
In Theorem~\ref{T:blowup1:blowup1} we have \,$\tilde{d}=1$\, and therefore the blown up minimal immersion \,$\tilde{f}_\infty$\, has spectral genus
\,$\lfloor \tfrac{\tilde{d}}{2} \rfloor = 0$\,. The limiting polynomial Killing field \,$\zeta^{\Liouville}_\infty$\, at the base point \,$\tilde{z}=0$\,
is therefore of the form
$$ \zeta^{\Liouville}_\infty = \begin{pmatrix} 0 & v_1 \\ 0 & 0 \end{pmatrix} \lambda^{\Liouville} + \begin{pmatrix} u_0 & v_0 \\ w_0 & -u_0 \end{pmatrix} \in \mathcal{P}^{\Liouville}_0 \; . $$
Here we have by Equations~\eqref{eq:KdV:xi-terms}, \eqref{eq:cmc:xi-trace:omega} and \eqref{eq:blowup:helicoid:beta-cancel}
\begin{align*}
v_1 & = \tfrac14 e^{\tilde{\omega}_\infty(0)} = \tfrac14 \lim_{n\to\infty} \bigr( \ell_n^{-1} e^{\omega_n(z_0)} \bigr)
= \tfrac14 \lim_{n\to\infty} \bigr( \tfrac{2|Q|}{H} \ell_n^{-1} \prod_{k=1}^g |\beta_{n,k}| \bigr) \\
& = \tfrac14 \lim_{n\to\infty} \bigr( \tfrac{2|Q|}{H} \ell_n^{-1} |\beta_{n,1}| \bigr) 
= \tfrac14 |\tilde{\beta}_{\infty,1}| \geq \tfrac14 \; .
\end{align*}
Therefore there exists \,$x\in \R$\, with \,$v_1 = \tfrac14\cosh(x)$\,. We then have by Equation~\eqref{eq:KdV:xi-terms}
$$ w_0 = -\tfrac14\,Q\,v_1^{-1} = -\tfrac12 \cosh(x)^{-1} \; . $$
By a finite rescaling of the coordinate \,$\tilde{z}$\, we can obtain that \,$u_0 = \tfrac12 (\tilde{\omega}_\infty)_{\tilde{z}} = \tfrac12 \tanh(x)$\, holds,
and by choosing the integration constant for \,$v_0$\, in the Pinkall-Sterling iteration of Proposition~\ref{P:KdV:ps} appropriately, we can moreover
obtain \,$v_0 = -2\cosh(x)^{-1}$\,.
To sum up, after these transformations we have
$$ \zeta^{\Liouville}_\infty = \begin{pmatrix} 0 & \tfrac14 \cosh(x) \\ 0 & 0 \end{pmatrix} \lambda^{\Liouville} + \begin{pmatrix} \tfrac12 \tanh(x) & -2 \cosh(x)^{-1} \\ -\tfrac12 \cosh(x)^{-1} & -\tfrac12 \tanh(x) \end{pmatrix} \in \mathcal{P}^{\Liouville}_0 \; . $$
This is the polynomial Killing field of a helicoid in \,$\R^3$\,, as we saw in Example~\ref{E:KdV:helicoid}. Under the stated circumstances, the blowup of a sequence
of cmc tori at the fastest possible rate (\,$\ell_n = |\lambda_{n,1}|$\,) therefore produces a helicoid in \,$\R^3$\,.
\end{example}

\bibliography{ref}

\end{document}